\documentclass[11pt,reqno]{amsart}

\usepackage{amscd,amssymb,amsmath,amsthm}
\usepackage[arrow,matrix]{xy}
\usepackage{graphicx}
\usepackage{epstopdf}
\usepackage{color}
\topmargin=0.1in
\textwidth6.0in
\textheight7.8in

\newtheorem{thm}{Theorem}

\newtheorem{lemma}{Lemma}

\newtheorem{rk}{Remark}
\newtheorem{cor}{Corollary}

\numberwithin{equation}{section} \setcounter{tocdepth}{1}


\def\r{\rho}

\def\C{\mathbb C}

\def\C{\mathbb{C}}
\def\N{\mathbb N}


\begin{document}

\title[$p$-adic dynamical systems of a $(2,2)$-rational function]{$p$-adic dynamical systems of $(2,2)$-rational functions with unique fixed
point}

\author{U.A. Rozikov, I.A. Sattarov}

 \address{U.\ A.\ Rozikov \\ Institute of mathematics,
29, Do'rmon Yo'li str., 100125, Tashkent, Uzbekistan.} \email
{rozikovu@yandex.ru}

 \address{I.\ A.\ Sattarov \\ Institute of mathematics,
29, Do'rmon Yo'li str., 100125, Tashkent, Uzbekistan.} \email
{iskandar1207@rambler.ru}

\begin{abstract}

We consider a family of $(2,2)$-rational functions given on the set of complex
$p$-adic field $\C_p$. Each such function has a unique fixed point. We study
$p$-adic dynamical systems generated by the $(2,2)$-rational functions. 
We show that the fixed point is indifferent and therefore
the convergence of the trajectories is not the typical
case for the dynamical systems.
Siegel disks of these dynamical systems are found.
 We obtain an upper bound for the set of limit points of each trajectory, i.e.,
we determine a sufficiently small set containing the set of limit points.
 For each $(2,2)$-rational function on $\C_p$ there are two points
 $\hat x_1=\hat x_1(f)$, $\hat x_2=\hat x_2(f)\in \C_p$ which are
 zeros of its denominator. We give explicit formulas of radiuses of spheres
(with the center at the fixed point) containing some points such that
the trajectories (under actions of $f$) of the points after a finite step
come to $\hat x_1$ or $\hat x_2$. Moreover for a class of $(2,2)$-rational functions
we study ergodicity properties of the dynamical systems on the set of $p$-adic numbers $Q_p$.
For each such function we describe all possible invariant spheres. We show that 
the $p$-adic dynamical system reduced on each invariant sphere is not ergodic with respect to Haar measure. 
\end{abstract}

\keywords{Rational dynamical systems; fixed point; invariant set; Siegel disk;
complex $p$-adic field; ergodic.} \subjclass[2010]{46S10, 12J12, 11S99,
30D05, 54H20.} \maketitle

\section{Introduction}

We study dynamical systems generated by a rational
function. A function is called a $(n,m)$-rational function if and only if it
can be written in the form $f(x)={P_n(x)\over Q_m(x)}$, where
$P_n(x)$ and $Q_m(x)$ are polynomial functions with degree $n$ and
$m$ respectively, $Q_m(x)$ is not the zero polynomial.

It is known that analytic functions play a fundamental
role in complex analysis and rational functions play
an analogous role in $p$-adic analysis \cite{E}, \cite{Rob}.
It is therefore natural to study dynamics generated by rational functions
in $p$-adic analysis.
In this paper we consider $(2,2)$-rational functions on the field of complex
$p$-adic numbers and study behavior of trajectories of the dynamical systems
generated by such functions.

The $p$-adic dynamical systems arise in Diophantine
geometry in the constructions of canonical heights, used for
counting rational points on algebraic varieties over a number
field, as in \cite{CS}. Moreover $p$-adic dynamical systems are
effective in computer science (straight line programs),
in numerical analysis and in simulations (pseudorandom numbers),
uniform distribution of sequences, cryptography
(stream ciphers, $T$-functions), combinatorics (Latin squares),
automata theory and formal languages, genetics. The monograph
\cite{AK09} contains the corresponding survey. For newer results see \cite{ARS}, \cite{AKY11}, \cite{Ana10}- \cite{Wal}.

Let us briefly mention papers which are devoted to dynamical systems
of $(n,m)$-rational functions (this is not a complete review of $p$-adic dynamical systems of rational functions).  A polynomial function can be considered as a $(n,0)$-rational function (see for example, \cite{FL11}). Therefore, we start from
 review of such functions.
The most studied discrete $p$-adic dynamical systems (iterations of maps) are the so-called
monomial systems.

In \cite{A2}, \cite{K0} the behavior of a $p$-adic dynamical system $f (x) = x^n$ in the fields
of $p$-adic numbers $Q_p$ and $\mathbb C_p$ were studied.

In \cite{AKK} the properties of the nonlinear $p$-adic dynamic system
 $f(x) = x^2 + c$ with a single parameter $c$ (i.e., a $(2,0)$-rational function) on the integer $p$-adic numbers $\mathbb Z_p$ are investigated. This
dynamic system illustrates possible brain behaviors during remembering.

In \cite{KN}, dynamical systems defined by the functions
$f_q(x) = x^n + q(x)$, where the perturbation $q(x)$ is a polynomial
whose coefficients have small $p$-adic absolute value, was
studied.

In \cite{F}, \cite{MM} the dynamical systems associated with the
function $f(x) = x^3 + ax^2$ on the set of $p$-adic numbers is studied.
More general form of this function, i.e., $f(x) = x^{2n+1} + ax^{n+1}$,
is considered in \cite{UF}.

Papers \cite{FF}, \cite{M1} (see also references therein) are devoted to $(1,1)$-rational $p$-adic dynamical systems.

In \cite{ARS} and \cite{KMa} the trajectories of an arbitrary $(2,1)$-rational
$p$-adic dynamical systems in a complex $p$-adic field $\C_p$ are studied.

The paper \cite{RS} is devoted to a
$(3,2)$-rational $p$-adic dynamical system in $\C_p$, when there exists a unique fixed point.

In \cite{S} we continued investigation of the $(3,2)$-rational $p$-adic dynamical systems in $\C_p$, when there are two fixed points.

In this paper we investigate behavior of trajectory of a
$(2,2)$-rational $p$-adic dynamical system in $\C_p$.

The paper is organized as follows: in
Section 2 we give some preliminaries. Section 3 contains the
definition of the $(2,2)$-rational function and main results about behavior of trajectories of
the $p$-adic dynamical system.  Siegel
disks of these dynamical systems are studied.
We obtain an upper bound for the set of limit points of each trajectory.
  We give explicit formulas of radiuses of spheres, with the center at the fixed point,
   containing some points such that
the trajectories of the points after a finite step
come to zeros of the denominator of the rational function.
In Section 4 for a class of $(2,2)$-rational functions
we study ergodicity properties of the dynamical systems on the set of $p$-adic numbers $Q_p$.
For each such function we describe all possible invariant spheres. We study 
ergodicity of each $p$-adic dynamical system with respect to Haar measure
reduced on each invariant sphere. It is proved that the dynamical 
systems are not ergodic.

\section{Preliminaries}

\subsection{$p$-adic numbers}

Let $Q$ be the field of rational numbers. The greatest common
divisor of the positive integers $n$ and $m$ is denotes by
$(n,m)$. Every rational number $x\neq 0$ can be represented in the
form $x=p^r\frac{n}{m}$, where $r,n\in\mathbb{Z}$, $m$ is a
positive integer, $(p,n)=1$, $(p,m)=1$ and $p$ is a fixed prime
number.

The $p$-adic norm of $x$ is given by
$$
|x|_p=\left\{
\begin{array}{ll}
p^{-r}, & \ \textrm{ for $x\neq 0$},\\[2mm]
0, &\ \textrm{ for $x=0$}.\\
\end{array}
\right.
$$
It has the following properties:

1) $|x|_p\geq 0$ and $|x|_p=0$ if and only if $x=0$,

2) $|xy|_p=|x|_p|y|_p$,

3) the strong triangle inequality
$$
|x+y|_p\leq\max\{|x|_p,|y|_p\},
$$

3.1) if $|x|_p\neq |y|_p$ then $|x+y|_p=\max\{|x|_p,|y|_p\}$,

3.2) if $|x|_p=|y|_p$ then $|x+y|_p\leq |x|_p$,

this is a non-Archimedean one.

The completion of $Q$ with  respect to $p$-adic norm defines the
$p$-adic field which is denoted by $Q_p$ (see \cite{Ko}).

The algebraic completion of $Q_p$ is denoted by $\C_p$ and it is
called {\it complex $p$-adic numbers}.  For any $a\in\C_p$ and
$r>0$ denote
$$
U_r(a)=\{x\in\C_p : |x-a|_p<r\},\ \ V_r(a)=\{x\in\C_p :
|x-a|_p\leq r\},
$$
$$
S_r(a)=\{x\in\C_p : |x-a|_p= r\}.
$$

A function $f:U_r(a)\to\C_p$ is said to be {\it analytic} if it
can be represented by
$$
f(x)=\sum_{n=0}^{\infty}f_n(x-a)^n, \ \ \ f_n\in \C_p,
$$ which converges uniformly on the ball $U_r(a)$.

\subsection{Dynamical systems in $\C_p$}

Recall some known facts concerning dynamical
systems $(f,U)$ in $\C_p$, where $f: x\in U\to f(x)\in U$ is an
analytic function and $U=U_r(a)$ or $\C_p$ (see for example \cite{PJS}).

Now let $f:U\to U$ be an analytic function. Denote
$f^n(x)=\underbrace{f\circ\dots\circ f}_n(x)$.

If $f(x_0)=x_0$ then $x_0$
is called a {\it fixed point}. The set of all fixed points of $f$
is denoted by Fix$(f)$. A fixed point $x_0$ is called an {\it
attractor} if there exists a neighborhood $U(x_0)$ of $x_0$ such
that for all points $x\in U(x_0)$ it holds
$\lim\limits_{n\to\infty}f^n(x)=x_0$. If $x_0$ is an attractor
then its {\it basin of attraction} is
$$
A(x_0)=\{x\in \C_p :\ f^n(x)\to x_0, \ n\to\infty\}.
$$
A fixed point $x_0$ is called {\it repeller} if there  exists a
neighborhood $U(x_0)$ of $x_0$ such that $|f(x)-x_0|_p>|x-x_0|_p$
for $x\in U(x_0)$, $x\neq x_0$.

Let $x_0$ be a fixed point of a
function $f(x)$.
Put $\lambda=f'(x_0)$. The point $x_0$ is attractive if $0<|\lambda|_p < 1$, {\it indifferent} if $|\lambda|_p = 1$,
and repelling if $|\lambda|_p > 1$.

The ball $U_r(x_0)$ (contained in $V$) is said to
be a {\it Siegel disk} if each sphere $S_{\r}(x_0)$, $\r<r$ is an
invariant sphere of $f(x)$, i.e. if $x\in S_{\r}(x_0)$ then all
iterated points $f^n(x)\in S_{\r}(x_0)$ for all $n=1,2\dots$.  The
union of all Siegel desks with the center at $x_0$ is said to {\it
a maximum Siegel disk} and is denoted by $SI(x_0)$.

\section{$(2,2)$-Rational $p$-adic dynamical systems}

In this paper we consider the dynamical system associated with the
$(2,2)$-rational function $f:\C_p\to\C_p$ defined by
\begin{equation}\label{fa}
f(x)=\frac{ax^2+bx+c}{x^2+dx+e}, \ \ a\neq 0,\ \ \ |b-ad|_p+|c-ae|_p\ne 0, \ \  a,b,c,d,e\in \C_p.
\end{equation}
where  $x\neq \hat x_{1,2}=\frac{-d\pm\sqrt{d^2-4e}}{2}$.

\begin{rk} We note that if $b=ad$ and $c=ae$ then from (\ref{fa})
we get $f(x)=a$, i.e., $f$ becomes a constant function.
Therefore we assumed $b\ne ad$ or $c\ne ae$.
\end{rk}

It is easy to see that for $(2,2)$-rational function (\ref{fa}) the equation
$f(x)=x$ for fixed points is equivalent to  the equation
\begin{equation}\label{ce}
x^3+(d-a)x^2+(e-b)x-c=0.
\end{equation}
Since $\C_p$ is algebraic close the equation (\ref{ce}) may have three
solutions with one of the following relations:

(i). One solution having multiplicity three;

(ii). Two solutions, one of which has multiplicity two;

(iii). Three distinct solutions.

In this paper we investigate the behavior of trajectories of an
arbitrary $(2,2)$-rational dynamical system in complex $p$-adic
filed $\C_p$ when the there is unique fixed point for $f$, i.e., we
consider the case (i).

The following lemma gives a criterion on parameters of
the function (\ref{fa}) guaranteing the uniqueness of its fixed point.

\begin{lemma}\label{tp} The function (\ref{fa}) has unique fixed point if and only if
\begin{equation}\label{fc}
   {{a-d}\over 3}=-\sqrt{{{e-b}\over 3}}=\sqrt[3]{c} \ \ or \ \ {{a-d}\over 3}=\sqrt{{{e-b}\over 3}}=\sqrt[3]{c}.
\end{equation}
\end{lemma}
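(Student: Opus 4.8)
The plan is to recognize that the cubic (\ref{ce}) has a unique solution in $\C_p$ precisely when it has a triple root, and then to translate the classical triple-root condition into the stated relations among the parameters. Since $\C_p$ is algebraically closed, the cubic $x^3+(d-a)x^2+(e-b)x-c=0$ always factors completely, so having a unique fixed point is equivalent to case (i), i.e. the cubic equals $(x-x_0)^3$ for some $x_0\in\C_p$. I would therefore start by expanding $(x-x_0)^3=x^3-3x_0x^2+3x_0^2x-x_0^3$ and matching coefficients with (\ref{ce}).

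Matching coefficients yields the three equations
\begin{equation}\label{match}
-3x_0=d-a,\qquad 3x_0^2=e-b,\qquad x_0^3=c.
\end{equation}
From the first equation I get $x_0=\frac{a-d}{3}$, which is the quantity appearing in (\ref{fc}). Substituting this into the second equation gives $3\left(\frac{a-d}{3}\right)^2=e-b$, i.e. $\left(\frac{a-d}{3}\right)^2=\frac{e-b}{3}$, which is exactly the statement $\frac{a-d}{3}=\pm\sqrt{\frac{e-b}{3}}$; the two choices of sign account for the two alternatives in (\ref{fc}). Substituting $x_0=\frac{a-d}{3}$ into the third equation gives $\left(\frac{a-d}{3}\right)^3=c$, i.e. $\frac{a-d}{3}=\sqrt[3]{c}$, where the cube root is to be read as the particular value equal to $\frac{a-d}{3}$. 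Conversely, if the relations (\ref{fc}) hold, then setting $x_0=\frac{a-d}{3}$ and reversing the substitutions shows that all three equations in (\ref{match}) are satisfied, so the cubic equals $(x-x_0)^3$ and the fixed point is unique.

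The one genuinely delicate point, and the step I expect to require the most care, is the bookkeeping of the multivalued radicals $\sqrt{\cdot}$ and $\sqrt[3]{\cdot}$ over $\C_p$. The equalities in (\ref{fc}) must be understood as asserting that \emph{some} choice of the square root and \emph{some} choice of the cube root coincide with the single-valued quantity $\frac{a-d}{3}$; the chained equation is shorthand for the simultaneous conditions $\left(\frac{a-d}{3}\right)^2=\frac{e-b}{3}$ and $\left(\frac{a-d}{3}\right)^3=c$. I would make this interpretation explicit so that squaring and cubing (which eliminate the sign and branch ambiguities) produce honest polynomial identities, and so that the "if and only if" is clean in both directions. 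Once the radicals are interpreted this way, the forward and backward implications are symmetric and the equivalence follows immediately from the coefficient comparison in (\ref{match}).
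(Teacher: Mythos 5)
Your proposal is correct and follows essentially the same route as the paper: both directions reduce to matching the cubic $x^3+(d-a)x^2+(e-b)x-c$ against $(x-x_0)^3$ and reading off the three coefficient equations, exactly as in the paper's necessity/sufficiency argument. Your explicit remark about interpreting the multivalued radicals in (\ref{fc}) as the polynomial identities $\left(\frac{a-d}{3}\right)^2=\frac{e-b}{3}$ and $\left(\frac{a-d}{3}\right)^3=c$ is a welcome clarification that the paper leaves implicit, but it does not change the substance of the argument.
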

\begin{proof} {\it Necessariness.} Assume (\ref{fa}) has a unique fixed point, say $x_0$.
Then the LHS of equation (\ref{ce}) (which is equivalent to $f(x)=x$) can be written as
$$x^3+(d-a)x^2+(e-b)x-c=(x-x_0)^3.$$
Consequently,
$$\left\{\begin{array}{lll}
3x_0=a-d \\[2mm]
3x_0^2=e-b \\[2mm]
x_0^3=c
\end{array}
\right.,$$ which gives
$$x_0={{a-d}\over 3}=\pm\sqrt{{{e-b}\over 3}}=\sqrt[3]{c}.$$

{\it Sufficientcy.} Assume the coefficients of (\ref{fa})
satisfy (\ref{fc}). Then it can be written as
\begin{equation}\label{fd}
f(x)=\frac{ax^2+bx+({{a-d}\over 3})^3}{x^2+dx+{(a-d)^2\over 3}+b},
\ \ a\neq0,\ \ b\ne -2a^2 \ \ \mbox{or} \ \ d\ne -2a, \ \  a,b,d\in \C_p.
\end{equation}
In this case the equation $f(x)=x$ can be written as
$$(x-{{a-d}\over 3})^3=0.$$
Thus $f(x)$ has unique fixed point $x_0={{a-d}\over 3}$.
\end{proof}

It follows from this lemma that if the function (\ref{fa}) has unique
fixed point then it has the form (\ref{fd}).
Thus we study the dynamical system $(f,\C_p)$ with $f$ given by (\ref{fd}).

For (\ref{fd}) we have
$$f'(x_0)=f'({{a-d}\over 3})=1,$$
i.e.,  the point $x_0$ is an indifferent point for (\ref{fd}).

In (\ref{fd}) one assumes $x^2+dx+{{(a-d)^2}\over 3}+b\neq 0$, i.e., $x\neq
x_{1,2}=-{d\over 2}\pm\sqrt{{d^2\over 4}-{{(a-d)^2}\over 3}-b}$.

 For any $x\in \C_p$, $x\ne x_{1,2}$, by simple calculations we
get
\begin{equation}\label{f2}
    |f(x)-x_0|_p=|x-x_0|_p\ \cdot{|{{2a+d}\over 3}
    (x-x_0)+(x_0-x_1)(x_0-x_2)|_p\over {|(x-x_0)+(x_0-x_1)|_p|(x-x_0)+(x_0-x_2)|_p}}.
\end{equation}

Denote
$$\mathcal P=\{x\in \C_p: \exists n\in \N\cup\{0\}, f^n(x)\in\{x_1, x_2\}\},$$
$$\delta=\left|{{2a+d}\over3}\right|_p, \ \ \alpha=|x_0-x_1|_p \ \ {\rm and} \ \ \beta=|x_0-x_2|_p.$$

\begin{lemma}\label{1}
\begin{itemize}
\item[1.] If $\left|\sqrt{{d^2\over 4}-{{(a-d)^2}\over
3}-b}\right|_p\ne\left|{{2a+d}\over 6}\right|_p$, then
$\alpha=\beta$.

\item[2.] If $\left|\sqrt{{d^2\over 4}-{{(a-d)^2}\over
3}-b}\right|_p=\left|{{2a+d}\over 6}\right|_p$, then
\end{itemize}
\begin{itemize}
\item $\alpha\leq\delta$ and $\beta\leq\delta$ for all
$p\geq3$.
\item $\alpha\leq2\delta$ and $\beta\leq2\delta$ for
$p=2$.
\end{itemize}
\end{lemma}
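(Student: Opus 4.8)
The plan is to reduce both claims to the non-Archimedean (strong triangle) inequality after computing $x_0-x_1$ and $x_0-x_2$ explicitly. Recall from (\ref{fd}) that $x_0=\frac{a-d}{3}$, and that the zeros of the denominator are $x_{1,2}=-\frac{d}{2}\pm R$, where I abbreviate $R=\sqrt{\frac{d^2}{4}-\frac{(a-d)^2}{3}-b}$. A direct computation gives $\frac{a-d}{3}+\frac{d}{2}=\frac{2a+d}{6}$, so that, writing $A=\frac{2a+d}{6}$,
$$x_0-x_1=A-R, \qquad x_0-x_2=A+R.$$
Hence $\alpha=|A-R|_p$ and $\beta=|A+R|_p$. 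I also record that $\delta=\left|\frac{2a+d}{3}\right|_p=|2A|_p=|2|_p\,|A|_p$; since $|2|_p=1$ for $p\ge 3$ and $|2|_2=\frac12$, this means $\delta=|A|_p$ when $p\ge 3$ and $|A|_2=2\delta$ when $p=2$. The condition separating the two parts of the lemma is then exactly whether $|R|_p=|A|_p$ or not, because $\left|\frac{2a+d}{6}\right|_p=|A|_p$.

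For part 1, I assume $|R|_p\ne|A|_p$. Since $|-R|_p=|R|_p$, the sharpened triangle inequality (property 3.1, which gives equality whenever the two norms differ) applies to both sums and yields
$$\alpha=|A-R|_p=\max\{|A|_p,|R|_p\}=|A+R|_p=\beta,$$
which is the desired equality $\alpha=\beta$.

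For part 2, I assume $|R|_p=|A|_p$. Here property 3.2 (if $|x|_p=|y|_p$ then $|x\pm y|_p\le|x|_p$) applies to each of $A-R$ and $A+R$, giving $\alpha\le|A|_p$ and $\beta\le|A|_p$. It then only remains to substitute the value of $|A|_p$ in terms of $\delta$: for $p\ge 3$ we have $|A|_p=\delta$, so $\alpha\le\delta$ and $\beta\le\delta$; for $p=2$ we have $|A|_2=2\delta$, so $\alpha\le 2\delta$ and $\beta\le 2\delta$.

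The whole argument is essentially an application of the isosceles-triangle principle for $p$-adic norms, so I do not expect a serious obstacle; the only points demanding care are the algebraic identity $x_0-x_{1,2}=A\mp R$ (in particular that the terms $\pm\frac{d}{2}$ and $\frac{a-d}{3}$ combine cleanly into $\frac{2a+d}{6}$) and the bookkeeping of the factor $|2|_p$, which is precisely what produces the extra factor of $2$ in the bound when $p=2$.
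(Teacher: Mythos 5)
Your proof is correct, and it is exactly the argument the paper intends: the paper's own proof of this lemma is the one-line remark ``This follows from properties of the norm $|\cdot|_p$,'' and your computation of $x_0-x_{1,2}=\frac{2a+d}{6}\mp R$ together with properties 3.1 and 3.2 of the $p$-adic norm (and the bookkeeping of $|2|_p$) is precisely the omitted detail.
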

\begin{proof} This follows from properties of the norm $|\cdot|_p$.
\end{proof}

\begin{rk}\label{r2} It is easy to see that $x_0-x_1$ and $x_0-x_2$
are symmetric in (\ref{f2}), i.e., if we replace them then RHS of
(\ref{f2}) does not change. Therefore we consider the dynamical
system $(f, \C_p\setminus\mathcal P)$ for cases $\alpha=\beta$ and $\alpha<\beta$.
\end{rk}

\subsection{Case: $\alpha=\beta$}

Let us consider the following
functions:

For $\alpha>\delta$ define the function $\varphi_{\alpha,\delta}:
[0,+\infty)\to [0,+\infty)$ by
$$\varphi_{\alpha,\delta}(r)=\left\{\begin{array}{lllll}
r, \ \ {\rm if} \ \ r<\alpha\\[2mm]
\alpha^*, \ \ {\rm if} \ \ r=\alpha\\[2mm]
{{\alpha^2}\over r}, \ \ {\rm if} \ \ \alpha<r<{{\alpha^2}\over \delta}\\[2mm]
\delta^*, \ \ {\rm if} \ \ r={{\alpha^2}\over \delta}\\[2mm]
\delta, \ \ \ \ {\rm if} \ \ r>{{\alpha^2}\over \delta}
\end{array}
\right.
$$
where $\alpha^*$ and $\delta^*$ some positive numbers with
$\alpha^*\geq\alpha$, $\delta^*\leq \delta$.

For $\alpha<\delta$ define the function $\phi_{\alpha,\delta}:
[0,+\infty)\to [0,+\infty)$ by
$$\phi_{\alpha,\delta}(r)=\left\{\begin{array}{lllll}
r, \ \ {\rm if} \ \ r<{{\alpha^2}\over \delta}\\[2mm]
\alpha', \ \ {\rm if} \ \ r={{\alpha^2}\over \delta}\\[2mm]
{{\delta r^2}\over {\alpha^2}}, \ \ {\rm if} \ \ {{\alpha^2}\over \delta}<r<\alpha\\[2mm]
\delta', \ \ {\rm if} \ \ r=\alpha\\[2mm]
\delta, \ \ \ \ {\rm if} \ \ r>\alpha
\end{array}
\right.
$$
where $\alpha'$ and $\delta'$ some positive numbers with
$\alpha'\leq{{\alpha^2}\over \delta}$, $\delta'\geq \delta$.

For $\alpha=\delta$ define the function $\psi_{\alpha}:
[0,+\infty)\to [0,+\infty)$ by
$$\psi_{\alpha}(r)=\left\{\begin{array}{lll}
r, \ \ {\rm if} \ \ r<\alpha\\[2mm]
\hat\alpha, \ \ {\rm if} \ \ r=\alpha\\[2mm]
\alpha, \ \  {\rm if} \ \ r>\alpha
\end{array}
\right.
$$
where  $\hat\alpha$ some positive number.

Using the formula (\ref{f2}) we easily get the following:

\begin{lemma}\label{lf2} If $\alpha=\beta$ and
$x\in S_r(x_0)$, then the following formula
holds for function (\ref{fd})
$$|f^n(x)-x_0|_p=\left\{\begin{array}{lll}
\varphi_{\alpha,\delta}^n(r), \ \ \mbox{if} \ \ \alpha>\delta\\[2mm]
\phi_{\alpha,\delta}^n(r), \ \ \mbox{if} \ \ \alpha<\delta\\[2mm]
\psi_{\alpha}^n(r), \ \ \mbox{if} \ \ \alpha=\delta.
\end{array}\right.$$
\end{lemma}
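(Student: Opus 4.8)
The plan is to reduce the whole statement to the single-step case $n=1$. The crucial structural fact is that, once $\alpha=\beta$ is imposed, the right-hand side of (\ref{f2}) depends on $x$ only through the radius $r=|x-x_0|_p$. So if I can show that
$$|f(x)-x_0|_p=\Phi\bigl(|x-x_0|_p\bigr),$$
where $\Phi$ is $\varphi_{\alpha,\delta}$, $\phi_{\alpha,\delta}$ or $\psi_{\alpha}$ according as $\alpha>\delta$, $\alpha<\delta$ or $\alpha=\delta$, then the general case follows by an immediate induction: applying the one-step identity to the point $f^{n-1}(x)$ gives $|f^n(x)-x_0|_p=\Phi\bigl(|f^{n-1}(x)-x_0|_p\bigr)=\Phi^n(r)$, with the convention about exceptional values described below applied afresh at each step.

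To prove the $n=1$ identity I would start from (\ref{f2}) and evaluate its three norms using the isosceles-triangle property of $|\cdot|_p$. Writing $t=x-x_0$ with $|t|_p=r$, the two denominator factors $|t+(x_0-x_1)|_p$ and $|t+(x_0-x_2)|_p$ each equal $\max\{r,\alpha\}$ whenever $r\neq\alpha$ (this is where $\alpha=\beta$ enters), while the numerator $\bigl|\frac{2a+d}{3}t+(x_0-x_1)(x_0-x_2)\bigr|_p$ compares the norm $\delta r$ of its first term against the norm $\alpha^2$ of its second, and equals $\max\{\delta r,\alpha^2\}$ whenever $\delta r\neq\alpha^2$, i.e. $r\neq\alpha^2/\delta$. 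Substituting these into (\ref{f2}) and cancelling produces a piecewise formula for $|f(x)-x_0|_p$ whose only breakpoints are $r=\alpha$ and $r=\alpha^2/\delta$. Their relative order is fixed by the sign of $\alpha-\delta$, since $\alpha^2/\delta\gtrless\alpha\iff\alpha\gtrless\delta$, and this is exactly what separates the three functions $\varphi_{\alpha,\delta}$, $\phi_{\alpha,\delta}$ and $\psi_{\alpha}$; a routine check of the five regimes (three when $\alpha=\delta$) shows the generic values match.

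The only genuine obstacle is the behaviour at the two exceptional radii, where two terms of equal norm are added and the strong triangle inequality yields only $\leq$ rather than equality; this is precisely why the three functions carry the undetermined values $\alpha^*,\delta^*,\alpha',\delta',\hat\alpha$. At $r=\alpha$ it is a denominator factor whose norm drops to some value $\leq\alpha^2$, so the quotient can only increase: this forces $|f(x)-x_0|_p\geq\alpha$ when $\alpha>\delta$ (hence $\alpha^*\geq\alpha$) and $\geq\delta$ when $\alpha<\delta$ (hence $\delta'\geq\delta$). Symmetrically, at $r=\alpha^2/\delta$ it is the numerator norm that drops to $\leq\alpha^2$, so the quotient can only decrease, giving $\delta^*\leq\delta$ when $\alpha>\delta$ and $\alpha'\leq\alpha^2/\delta$ when $\alpha<\delta$. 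In the boundary case $\alpha=\delta$ the two exceptional radii coincide at $r=\alpha$, both norms may drop at once, and no one-sided bound survives, so the value is left completely free as $\hat\alpha$. Verifying that these inequality directions are oriented as stated is the one point requiring care; everything else is bookkeeping that matches the constraints already built into the definitions of $\varphi_{\alpha,\delta}$, $\phi_{\alpha,\delta}$ and $\psi_{\alpha}$, and this establishes the base case $n=1$ and hence, by the induction above, the lemma.
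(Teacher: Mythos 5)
Your proposal is correct and follows the same route the paper intends: the paper gives no details, asserting the lemma follows ``easily'' from formula (\ref{f2}), and your case analysis via the isosceles-triangle property at the breakpoints $r=\alpha$ and $r=\alpha^2/\delta$, together with induction on $n$, is exactly the computation being left implicit. Your explicit caveat that the exceptional values $\alpha^*,\delta^*,\alpha',\delta',\hat\alpha$ depend on the point and must be read afresh at each iterate is in fact more careful than the paper's statement, and is consistent with how the paper later uses $\alpha^*(x)=|f(x)-x_0|_p$.
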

Thus the $p$-adic dynamical system $f^n(x), n\geq 1, x\in
\C_p\setminus{\mathcal P}$ is related to the real dynamical
systems generated by $\varphi_{\alpha,\delta}$,
$\phi_{\alpha,\delta}$ and $\psi_{\alpha}$. Now we are going to
study these (real) dynamical systems.

\begin{lemma}\label{l1} If $\alpha>\delta$, then the dynamical
system generated by $\varphi_{\alpha,\delta}(r)$ has
the following properties:
\begin{itemize}
\item[1.] ${\rm Fix}(\varphi_{\alpha,\delta})=\{r: 0\leq r<\alpha\}\cup\{\alpha:\, if \,
\alpha^*=\alpha\}$.
\item[2.] If $r>\alpha$, then
$$\varphi_{\alpha,\delta}^n(r)=\left\{\begin{array}{lll}
{{\alpha^2}\over r}, \ \ \mbox{for all} \ \ \alpha<r<{{\alpha^2}\over \delta}\\[2mm]
\delta^*, \ \ \mbox{for} \ \ r={{\alpha^2}\over \delta}\\[2mm]
\delta, \ \ \ \ \mbox{for all} \ \ r>{{\alpha^2}\over \delta}
\end{array}
\right.$$ $\mbox{for any} \ \ n\geq 1$.
\item[3.] If $r=\alpha$ and $\alpha^*>\alpha$, then
$$\varphi_{\alpha,\delta}^n(r)=\left\{\begin{array}{lll}
{{\alpha^2}\over \alpha^*}, \ \ \mbox{if} \ \ \alpha<\alpha^*<{{\alpha^2}\over \delta}\\[2mm]
\delta^*, \ \ \mbox{if} \ \ \alpha^*={{\alpha^2}\over \delta}\\[2mm]
\delta, \ \ \ \ \mbox{if} \ \ \alpha^*>{{\alpha^2}\over \delta}
\end{array}
\right.$$ $\mbox{for any} \ \ n\geq 2$.
\end{itemize}
\end{lemma}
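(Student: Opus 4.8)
The plan is to treat $\varphi_{\alpha,\delta}$ purely as a one-dimensional real map and to exploit the fact that its first branch is the identity, so that the whole interval $[0,\alpha)$ consists of fixed points; the rest of the lemma then reduces to tracking where a single iteration sends points lying outside $[0,\alpha)$.

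First I would establish part (1) by testing each branch of the definition against the equation $\varphi_{\alpha,\delta}(r)=r$. On $[0,\alpha)$ the map is the identity, so every such $r$ is fixed. At $r=\alpha$ the value is $\alpha^*$, giving a fixed point precisely when $\alpha^*=\alpha$. For $\alpha<r<\alpha^2/\delta$ the equation $\alpha^2/r=r$ forces $r=\alpha$, which is excluded; and at $r=\alpha^2/\delta$ and $r>\alpha^2/\delta$ the values $\delta^*\le\delta$ and $\delta$ both lie strictly below $\alpha$ (using $\alpha>\delta$, whence $\alpha^2/\delta>\alpha$), so they cannot equal $r$. Exhausting the branches yields the claimed fixed-point set.

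For part (2) the key observation is that a single application of $\varphi_{\alpha,\delta}$ already lands inside the fixed set $[0,\alpha)$. Indeed, for $\alpha<r<\alpha^2/\delta$ the monotonicity of $r\mapsto\alpha^2/r$ together with $\alpha>\delta$ gives $\delta<\alpha^2/r<\alpha$, so $\varphi_{\alpha,\delta}(r)=\alpha^2/r$ is a fixed point and $\varphi_{\alpha,\delta}^n(r)=\alpha^2/r$ for every $n\ge1$. Similarly $\varphi_{\alpha,\delta}(\alpha^2/\delta)=\delta^*\le\delta<\alpha$ and $\varphi_{\alpha,\delta}(r)=\delta<\alpha$ for $r>\alpha^2/\delta$, each of which is already fixed; hence the orbit stabilizes after the first step, giving the three stated cases.

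Finally, part (3) follows by reduction to part (2): when $r=\alpha$ and $\alpha^*>\alpha$, the first iterate is $\varphi_{\alpha,\delta}(\alpha)=\alpha^*>\alpha$, so from the second iterate onward I apply part (2) with starting radius $\alpha^*$ in place of $r$. The three subcases $\alpha<\alpha^*<\alpha^2/\delta$, $\alpha^*=\alpha^2/\delta$ and $\alpha^*>\alpha^2/\delta$ correspond exactly to the three regimes of part (2) and produce the asserted values for $n\ge2$. There is no genuinely hard step here; the only point demanding care is the consistent use of $\alpha>\delta$ to place the break-point $\alpha^2/\delta$ above $\alpha$ and to confirm that the images of the outer branches fall into $[0,\alpha)$.
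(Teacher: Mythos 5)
Your proposal is correct and follows essentially the same route as the paper: part (1) by branchwise inspection of $\varphi_{\alpha,\delta}(r)=r$, part (2) by showing one application of the map sends any $r>\alpha$ into the fixed-point set $[0,\alpha)$ (using $\delta^*\leq\delta<\alpha$ and $\delta<\alpha^2/r<\alpha$), and part (3) by reducing to part (2) after the first iterate. No discrepancies to report.
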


\begin{proof} 1. This is the result of a simple analysis
of the equation $\varphi_{\alpha,\delta}(r)=r$.

2. If $r>\delta$, then
$$\varphi_{\alpha,\delta}(r)=\left\{\begin{array}{lll}
{\alpha^2\over r}, \ \ {\rm if} \ \ \alpha<r<{{\alpha^2}\over \delta}\\[2mm]
\delta^*, \ \ {\rm if} \ \ r={{\alpha^2}\over \delta}\\[2mm]
\delta, \ \ {\rm if} \ \ r>{{\alpha^2}\over \delta}.
\end{array}
\right.
$$
Consequently,
$$\alpha<r<{{\alpha^2}\over \delta} \ \ \Rightarrow \ \ \delta<{{\alpha^2}\over r}<\alpha \ \ \Rightarrow \ \ \varphi_{\alpha,\delta}(r)<\alpha.$$
If $r\geq{{\alpha^2}\over \delta}$, then by
$\delta^*\leq\delta<\alpha$ we have
$\varphi_{\alpha,\delta}(r)<\alpha$.
Thus $\varphi_{\alpha,\delta}(\varphi_{\alpha,\delta}(r))=\varphi_{\alpha,\delta}(r)$,
i.e., $\varphi_{\alpha,\delta}(r)$ is a fixed point of
$\varphi_{\alpha,\delta}$ for any $r>\alpha$. Consequently, for each $n\geq 1$ we have
$$\varphi_{\alpha,\delta}^n(r)=\left\{\begin{array}{lll}
{\alpha^2\over r}, \ \ {\rm if} \ \ \alpha<r<{{\alpha^2}\over\delta}\\[2mm]
\delta^*, \ \ {\rm if} \ \ r={{\alpha^2}\over\delta}\\[2mm]
\delta, \ \ {\rm if} \ \ r>{{\alpha^2}\over\delta}.
\end{array}
\right.$$

3. The part 3 easily follows from the parts 1 and 2.
\end{proof}

\begin{lemma}\label{l2} If $\alpha<\delta$, then the dynamical
system generated by $\phi_{\alpha,\delta}(r)$ has the following properties:
 \begin{itemize}
\item[A.] ${\rm Fix}(\phi_{\alpha,\delta})=
\{r: 0\leq r<{{\alpha^2}\over \delta}\}\cup\{{{\alpha^2}\over \delta}:\, if \,
\alpha'={{\alpha^2}\over \delta}\}\cup\{\delta\}$.
\item[B.] If $r>{{\alpha^2}\over \delta}$, then
$$\lim_{n \to \infty}\phi_{\alpha,\delta}^n(r)=\delta.$$
\item[C.] If $r={{\alpha^2}\over \delta}$ and $\alpha'<{{\alpha^2}\over \delta}$, then
$\phi_{\alpha,\delta}^n(r)=\alpha'$ $\mbox{for all} \ \ n\geq 1$.
\end{itemize}
\end{lemma}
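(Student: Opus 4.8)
The plan is to treat $\phi_{\alpha,\delta}$ as a piecewise-defined real map and, in each regime, reduce the analysis to the single ``core'' branch $g(r)=\delta r^2/\alpha^2$ that governs the behavior on the interval $(\alpha^2/\delta,\alpha)$. The starting observations are that $g$ is strictly increasing, that its only fixed points are $r=0$ and $r=\alpha^2/\delta$, and that $g(r)-r=r(\delta r-\alpha^2)/\alpha^2>0$ whenever $r>\alpha^2/\delta$; moreover $g(\alpha^2/\delta)=\alpha^2/\delta$ and $g(\alpha)=\delta$. These facts, together with the standing assumption $\alpha<\delta$ (so that $\alpha^2/\delta<\alpha<\delta$ and $\delta'\geq\delta>\alpha$), drive all three parts.

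For part A I would simply solve $\phi_{\alpha,\delta}(r)=r$ on each piece. On $\{r<\alpha^2/\delta\}$ every point is fixed; at $r=\alpha^2/\delta$ the value is $\alpha'$, giving a fixed point precisely when $\alpha'=\alpha^2/\delta$; on $(\alpha^2/\delta,\alpha)$ the equation $g(r)=r$ forces $r=\alpha^2/\delta$, which is excluded, so there is no fixed point; at $r=\alpha$ the value $\delta'\geq\delta>\alpha$ cannot equal $\alpha$; and on $\{r>\alpha\}$ the constant value $\delta$ is fixed only at $r=\delta$, which does lie in this range. Collecting these cases yields the asserted description of ${\rm Fix}(\phi_{\alpha,\delta})$.

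For part B I would split the initial radius $r>\alpha^2/\delta$ into three regimes. If $r>\alpha$ then $\phi_{\alpha,\delta}(r)=\delta$ and $\delta$ is fixed, so the orbit is constantly $\delta$ from the first step; if $r=\alpha$ then $\phi_{\alpha,\delta}(r)=\delta'\geq\delta>\alpha$, whence the next iterate is $\delta$ and it stays there. The substantive case is $\alpha^2/\delta<r<\alpha$, where $\phi_{\alpha,\delta}$ acts as $g$. Here I would argue by monotonicity: the orbit $r_{n+1}=g(r_n)$ is strictly increasing as long as it stays in $(\alpha^2/\delta,\alpha)$, and is bounded above by $\alpha$. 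The key step, and the one requiring care, is to rule out the orbit accumulating inside the interval. If it stayed in $(\alpha^2/\delta,\alpha)$ for all $n$, then being increasing and bounded it would converge to some $L\in(\alpha^2/\delta,\alpha]$, and continuity of $g$ would force $g(L)=L$, contradicting the absence of fixed points of $g$ in that half-open interval. Hence some iterate $r_N$ satisfies $r_N\geq\alpha$; since $g(r)<g(\alpha)=\delta$ on the interval, in fact $r_N\in[\alpha,\delta)$, and one further application (landing directly on $\delta$ when $r_N>\alpha$, or passing through $\delta'$ and then $\delta$ when $r_N=\alpha$) brings the orbit to the fixed point $\delta$, giving $\lim_{n\to\infty}\phi_{\alpha,\delta}^n(r)=\delta$.

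Part C is immediate: by definition $\phi_{\alpha,\delta}(\alpha^2/\delta)=\alpha'$, and the hypothesis $\alpha'<\alpha^2/\delta$ places $\alpha'$ in the identity branch $\{r<\alpha^2/\delta\}$, so $\alpha'$ is fixed and $\phi_{\alpha,\delta}^n(r)=\alpha'$ for every $n\geq1$. I expect the only genuine obstacle to be the finite-escape argument in the middle regime of part B; everything else is a direct case-by-case evaluation of the piecewise formula, entirely parallel to the proof of Lemma \ref{l1}.
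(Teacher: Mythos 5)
Your proof is correct, and parts A and C coincide with the paper's (which simply records them as a ``simple analysis'' of $\phi_{\alpha,\delta}(r)=r$ and a one-line evaluation, respectively). The only place where you genuinely diverge is the substantive step of part B, the escape from the branch $(\alpha^2/\delta,\alpha)$ where $\phi_{\alpha,\delta}$ acts as $g(r)=\delta r^2/\alpha^2$. The paper argues quantitatively: it computes $\phi'_{\alpha,\delta}(r)=2\delta r/\alpha^2>2$ on that interval, so (by the mean value theorem applied at the repelling fixed point $\alpha^2/\delta$) the distance of the orbit from $\alpha^2/\delta$ at least doubles at each step while it remains in the interval, forcing it out after finitely many iterations; this yields an effective bound on the escape time $n_0$. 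You instead argue softly: since $g(r)>r$ on $(\alpha^2/\delta,\alpha)$, an orbit trapped there forever would be increasing and bounded, hence convergent to a limit $L\in(\alpha^2/\delta,\alpha]$ which continuity of $g$ would force to be a fixed point of $g$, and no such fixed point exists. Both arguments are valid; the paper's buys an explicit rate, yours avoids any derivative computation and is the more elementary route. You are also slightly more careful than the paper at the exit point: you explicitly separate the cases $r_N>\alpha$ (landing directly on $\delta$) and $r_N=\alpha$ (passing through $\delta'\geq\delta$ and then $\delta$), whereas the paper asserts $\phi^{n_0}_{\alpha,\delta}(r)\in(\alpha,\delta)$ without addressing the possibility of hitting $\alpha$ exactly.
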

\begin{proof} A. This is the result of a simple analysis of the
equation $\phi_{\alpha,\delta}(r)=r$.

B. By definition of
$\phi_{\alpha,\delta}(r)$, for $r>\alpha$ we have
$\phi_{\alpha,\delta}(r)=\delta$, i.e., the function is constant.
Therefore
$$\lim_{n \to \infty}\phi_{\alpha,\delta}^n(r)=\delta.$$
For $r=\alpha$ we have $\phi_{\alpha,\delta}(\alpha)=\delta'\geq \delta$ and by condition
$\delta>\alpha$, we get
$\phi_{\alpha,\delta}(\alpha)>\alpha$. Consequently,
$$\lim_{n \to \infty}\phi_{\alpha,\delta}^n(\alpha)=\delta.$$
Assume now  ${{\alpha^2}\over \delta}<r<\alpha$ then
$\phi_{\alpha,\delta}(r)={{\delta r^2}\over {\alpha^2}}$,
$\phi'_{\alpha,\delta}(r)={{2\delta r}\over {\alpha^2}}>2$ and
$$\phi_{\alpha,\delta}(({{\alpha^2}\over
\delta},\alpha))=({{\alpha^2}\over
\delta},\delta)\cup\{\delta'\}.$$
Since  $\phi'_{\alpha,\delta}(r)>2$
for $r\in ({{\alpha^2}\over \delta},\alpha)$ there exists $n_0\in N$ such that
$\phi_{\alpha,\delta}^{n_0}(r)\in (\alpha,\delta)$.
Hence for $n\geq
n_0$ we get $\phi_{\alpha,\delta}^n(r)>\alpha$ and consequently
$$\lim_{n \to \infty}\phi_{\alpha,\delta}^n(r)=\delta.$$

C. If $r={{\alpha^2}\over \delta}$ and $\alpha'<{{\alpha^2}\over
\delta}$ then
$\phi_{\alpha,\delta}(r)=\alpha'<{{\alpha^2}\over \delta}$. Moreover,
$\alpha'$ is a fixed point for the function $\phi_{\alpha,\delta}$.
Thus for $n\geq 1$ we obtain
$\phi_{\alpha,\delta}^n(r)=\alpha'.$
\end{proof}

\begin{lemma}\label{l3} If $\alpha=\delta$, then the dynamical
system generated by $\psi_{\alpha}(r)$ has the following properties:
\begin{itemize}
\item[I.] ${\rm Fix}(\psi_{\alpha})=\{r: 0\leq r<\alpha\}\cup\{\alpha:\, if \,
\hat\alpha=\alpha\}$.
\item[II.] If $r>\alpha$, then $\psi_{\alpha}(r)=\alpha$.
\item[III.] Let $r=\alpha$.
\begin{itemize}
\item[III.i)] If $\hat\alpha<\alpha$, then
$\psi^{n}_{\alpha}(r)=\hat\alpha$, for any $n\geq 1$.
\item[III.ii)] If $\hat\alpha>\alpha$, then
$\psi^2_{\alpha}(\alpha)=\alpha.$
\end{itemize}
\end{itemize}
\end{lemma}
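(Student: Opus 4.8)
The plan is to read off every claim directly from the piecewise definition of $\psi_{\alpha}$, since the map is essentially a degenerate limiting case of $\varphi_{\alpha,\delta}$ and $\phi_{\alpha,\delta}$ when $\alpha=\delta$: the whole ``hyperbolic'' band between $\alpha^2/\delta$ and $\alpha$ collapses, leaving only the three regimes $r<\alpha$, $r=\alpha$, $r>\alpha$. There is no genuine obstacle here; the entire argument is a case analysis, so the only care needed is to keep the three pieces straight and to feed the output of one iteration back into the correct piece.

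First I would establish part I by solving $\psi_{\alpha}(r)=r$ separately on each piece. On $\{r<\alpha\}$ the map is the identity, so every such point is fixed. At $r=\alpha$ we have $\psi_{\alpha}(\alpha)=\hat\alpha$, so $\alpha$ is fixed precisely when $\hat\alpha=\alpha$. For $r>\alpha$ we get $\psi_{\alpha}(r)=\alpha\neq r$, so no point above $\alpha$ is fixed. This yields
$$
{\rm Fix}(\psi_{\alpha})=\{r:0\leq r<\alpha\}\cup\{\alpha:\,\text{if }\hat\alpha=\alpha\},
$$
as stated. Part II requires nothing beyond the defining formula: for $r>\alpha$ the map is the constant $\alpha$, so $\psi_{\alpha}(r)=\alpha$.

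For part III I would start from $\psi_{\alpha}(\alpha)=\hat\alpha$ and branch on the size of $\hat\alpha$. In case III.i, when $\hat\alpha<\alpha$, the image $\hat\alpha$ lands in the identity region $\{r<\alpha\}$, which by part I consists entirely of fixed points; hence $\psi_{\alpha}$ fixes $\hat\alpha$ and $\psi^{n}_{\alpha}(\alpha)=\hat\alpha$ for every $n\geq1$. In case III.ii, when $\hat\alpha>\alpha$, the image $\hat\alpha$ lies in the region $\{r>\alpha\}$, so applying part II gives $\psi_{\alpha}(\hat\alpha)=\alpha$, whence $\psi^2_{\alpha}(\alpha)=\alpha$. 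All three parts thus follow from the definition together with the fixed-point description already obtained, and the proof closes with no further computation.
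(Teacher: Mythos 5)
Your proposal is correct and follows essentially the same route as the paper: a direct case analysis of the piecewise definition of $\psi_{\alpha}$, with parts I and II read off immediately and part III obtained by feeding $\hat\alpha$ back into the appropriate piece. No issues.
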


\begin{proof} I. This is the result of a simple
analysis of the equation $\psi_{\alpha}(r)=r$.

II. By definition of $\psi_{\alpha}(r)$, for any
$r>\alpha$ we have $\psi_{\alpha}(r)=\alpha$.

III. If $r=\alpha$ then $\psi_{\alpha}(r)=\hat\alpha$.

For $\hat\alpha\leq\alpha$ we have
$\psi_{\alpha}(\hat\alpha)=\hat\alpha$. Thus for all
$n\geq 1$ one has $\psi_{\alpha}^{n}(r)=\hat\alpha$.

In case $\hat\alpha>\alpha$ we have
$\psi_{\alpha}(\hat\alpha)=\alpha$,
$\psi_{\alpha}(\alpha)=\hat\alpha$. Hence
$\psi^2_{\alpha}(\alpha)=\alpha.$
\end{proof}

Now we shall apply these lemmas to study of the $p$-adic
dynamical system generated by function (\ref{fd}).

For $\alpha>\delta$ denote the following
$$\alpha^*(x)=|f(x)-x_0|_p, \ \ {\rm if} \ \ x\in
S_{\alpha}(x_0)$$ and
$$\delta^*(x)=|f(x)-x_0|_p, \ \ {\rm if} \ \ x\in
S_{{\alpha^2}\over \delta}(x_0).$$ Then using Lemma \ref{lf2} and
Lemma \ref{l1} we obtain the following

 \begin{thm}\label{t1} If $\alpha>\delta$, then
 the $p$-adic dynamical system generated by
 function (\ref{fd}) has the following properties:
\begin{itemize}
\item[1.]
\begin{itemize}
\item[1.1)] $SI(x_0)=U_{\alpha}(x_0)$.
\item[1.2)] $\mathcal P\subset S_{\alpha}(x_0)$.
\end{itemize}
\item[2.] If $r>\alpha$ and $x\in S_{r}(x_0)$,
then $$f^n(x)\in\left\{\begin{array}{lll}
S_{\alpha^2\over r}(x_0), \ \ \mbox{for all} \ \ \alpha<r<{{\alpha^2}\over \delta}\\[2mm]
S_{\delta^*(x)}(x_0), \ \ \mbox{for} \ \ r={{\alpha^2}\over \delta}\\[2mm]
S_{\delta}(x_0), \ \ \mbox{for all} \ \ r>{{\alpha^2}\over
\delta},
\end{array}
\right.$$ for any $n\geq 1$.
\item[3.] If $x\in S_{\alpha}(x_0)\setminus\mathcal P$,
then one of the following two possibilities holds:
\begin{itemize}
\item[3.1)]There exists $k\in N$ and $\mu_k>\alpha$
such that $f^k(x)\in S_{\mu_k}(x_0)$ and $$f^m(x)\in\left\{\begin{array}{lll}
S_{\alpha^2\over {\mu_k}}(x_0), \ \ \mbox{for all} \ \ \alpha<\mu_k<{{\alpha^2}\over \delta}\\[2mm]
S_{\delta^*(f^k(x))}(x_0), \ \ \mbox{for} \ \ \mu_k={{\alpha^2}\over \delta}\\[2mm]
S_{\delta}(x_0), \ \ \mbox{for all} \ \ \mu_k>{{\alpha^2}\over
\delta}
\end{array}
\right.$$ for any $m\geq k+1$ and $f^m(x)\in S_{\alpha}(x_0)$ if
$m\leq k-1$.
\item[3.2)] The trajectory $\{f^k(x), k\geq 1\}$ is a subset of
$S_{\alpha}(x_0)$.
\end{itemize}
\end{itemize}
\end{thm}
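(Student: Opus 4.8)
The plan is to transport the three lemmas on the real maps $\varphi_{\alpha,\delta}$ (Lemmas \ref{lf2} and \ref{l1}) into the $p$-adic setting. Indeed, Lemma \ref{lf2} identifies $|f^n(x)-x_0|_p$ with $\varphi_{\alpha,\delta}^n(r)$ whenever $x\in S_r(x_0)$ and $\alpha>\delta$, so every assertion about the radii of the iterates reduces to a statement about the orbit of $r$ under the real map $\varphi_{\alpha,\delta}$.

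For part 1.1 I would first note that Lemma \ref{l1}(1) gives $\varphi_{\alpha,\delta}(r)=r$ for every $r<\alpha$; combined with Lemma \ref{lf2} this says each sphere $S_\rho(x_0)$ with $\rho<\alpha$ is invariant, so $U_\alpha(x_0)$ is a Siegel disk. To see it is maximal, observe that for any $\rho$ with $\alpha<\rho<\alpha^2/\delta$ (such $\rho$ exist since $\alpha>\delta$) one has $\varphi_{\alpha,\delta}(\rho)=\alpha^2/\rho<\alpha<\rho$, so $S_\rho(x_0)$ is not invariant; hence no ball strictly larger than $U_\alpha(x_0)$ can be a Siegel disk, giving $SI(x_0)=U_\alpha(x_0)$. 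For part 1.2, recall that in the case $\alpha=\beta$ both denominator zeros $x_1,x_2$ lie on $S_\alpha(x_0)$. If $x\in S_r(x_0)$ with $r\ne\alpha$, then Lemma \ref{l1}(1)--(2) shows $\varphi_{\alpha,\delta}^n(r)=r<\alpha$ for $r<\alpha$, while for $r>\alpha$ all three possible values $\alpha^2/r,\ \delta^*,\ \delta$ are strictly below $\alpha$; in either case $|f^n(x)-x_0|_p\ne\alpha$ for every $n$, so $f^n(x)\notin\{x_1,x_2\}$ and $x\notin\mathcal P$. Therefore $\mathcal P\subset S_\alpha(x_0)$.

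Part 2 is then immediate: for $r>\alpha$ and $x\in S_r(x_0)$, Lemma \ref{lf2} gives $|f^n(x)-x_0|_p=\varphi_{\alpha,\delta}^n(r)$, and Lemma \ref{l1}(2) evaluates the right-hand side to $\alpha^2/r$, $\delta^*(x)$, or $\delta$ according to the position of $r$ relative to $\alpha^2/\delta$. Here I would replace the abstract constant $\delta^*$ of Lemma \ref{l1} by the point-dependent value $\delta^*(x)=|f(x)-x_0|_p$, which is legitimate because once the radius drops to $\delta^*(x)\le\delta<\alpha$ it is a fixed point of $\varphi_{\alpha,\delta}$ and hence is preserved by all further iterates.

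The delicate case is part 3, where $r=\alpha$ is exactly the boundary value at which $\varphi_{\alpha,\delta}(\alpha)=\alpha^*$ is not determined by $r$ alone but depends on the finer arithmetic of $x$ through $\alpha^*(x)=|f(x)-x_0|_p\ge\alpha$. Since $x\notin\mathcal P$, the orbit $\{f^k(x)\}$ never meets $x_1,x_2$ and is well defined for all $k$. I would then split according to whether the whole orbit stays on $S_\alpha(x_0)$: if it does, we are in case 3.2. Otherwise let $k$ be the first index with $f^k(x)\in S_{\mu_k}(x_0)$ and $\mu_k>\alpha$; by minimality $f^m(x)\in S_\alpha(x_0)$ for $m\le k-1$, and applying part 2 to the point $f^k(x)$, which lies on a sphere of radius $\mu_k>\alpha$, yields the three alternatives of case 3.1 for $m\ge k+1$. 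The main obstacle is precisely this boundary indeterminacy at $r=\alpha$: because $\varphi_{\alpha,\delta}$ does not predict $\alpha^*$, one cannot run the real dynamics blindly and must instead track the genuine $p$-adic norms step by step, which is exactly what the first-exit-time argument accomplishes.
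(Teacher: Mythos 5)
Your proposal is correct and follows essentially the same route as the paper's proof: parts 1 and 2 are read off from Lemma \ref{lf2} together with Lemma \ref{l1}, and part 3 is the same first-exit-time dichotomy, which rests on the key inequality $|f(y)-x_0|_p\geq\alpha$ for $y\in S_\alpha(x_0)\setminus\mathcal P$. The only (presentational) difference is that the paper re-derives this inequality explicitly from formula (\ref{f2}) and the ultrametric inequality at each step, whereas you invoke it via the constraint $\alpha^*\geq\alpha$ already built into the definition of $\varphi_{\alpha,\delta}$ and Lemma \ref{lf2}; both are legitimate, and your use of it is what guarantees that the first departure from $S_\alpha(x_0)$ must be outward, so the dichotomy 3.1)/3.2) is exhaustive.
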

\begin{proof}
The part 2 easily follows from Lemma \ref{lf2} and  the part 2 of Lemma \ref{l1}.

3. Take $x\in S_\alpha(x_0)\setminus \mathcal P$ then we have
$$
|f(x)-x_0|_p={{\alpha^3}\over{\left|(x-x_0)+(x_0-x_1)\right|_p\left|(x-x_0)+(x_0-x_2)\right|_p}}\geq\alpha.
$$
If $|f(x)-x_0|_p>\alpha$ then there is $\mu_1>\alpha$ such that
$f(x)\in S_{\mu_1}(x_0)$ and by part 2 we have
$$f^m(x)\in\left\{\begin{array}{lll}
S_{\alpha^2\over {\mu_1}}(x_0), \ \ \mbox{for all} \ \ \alpha<\mu_1<{{\alpha^2}\over \delta}\\[2mm]
S_{\delta^*(f(x))}(x_0), \ \ \mbox{for} \ \ \mu_1={{\alpha^2}\over \delta}\\[2mm]
S_{\delta}(x_0), \ \ \mbox{for all} \ \ \mu_1>{{\alpha^2}\over
\delta}
\end{array}
\right.$$ for any $m\geq 2$. So in this case $k=1$.

If $|f(x)-x_0|_p=\alpha$ then we consider the following
$$
|f^2(x)-x_0|_p={{\alpha^3}\over{\left|(f(x)-x_0)+(x_0-x_1)\right|_p\left|(f(x)-x_0)+(x_0-x_2)\right|_p}}\geq\alpha.
$$
Now, if $|f^2(x)-x_0|_p>\alpha$ then there is $\mu_2>\alpha$ such
that $f^2(x)\in S_{\mu_2}(x_0)$ and by part 2 we get
$$f^m(x)\in\left\{\begin{array}{lll}
S_{\alpha^2\over {\mu_2}}(x_0), \ \ \mbox{for all} \ \ \alpha<\mu_2<{{\alpha^2}\over \delta}\\[2mm]
S_{\delta^*(f^2(x))}(x_0), \ \ \mbox{for} \ \ \mu_2={{\alpha^2}\over \delta}\\[2mm]
S_{\delta}(x_0), \ \ \mbox{for all} \ \ \mu_2>{{\alpha^2}\over
\delta}
\end{array}
\right.$$ for any $m\geq 3$. So in this case $k=2$.

If $|f^2(x)-x_0|_p=\alpha$ then we can continue the argument and
get the following inequality
$$|f^k(x)-x_0|_p\geq\alpha.$$
Hence in each step we may have two possibilities:
$|f^k(x)-x_0|_p=\alpha$ or $|f^k(x)-x_0|_p>\alpha$. In case
$|f^k(x)-x_0|_p>\alpha$ there exists $\mu_k$ such that $f^k(x)\in
S_{\mu_k}(x_0)$, and
$$f^m(x)\in\left\{\begin{array}{lll}
S_{\alpha^2\over {\mu_k}}(x_0), \ \ \mbox{for all} \ \ \alpha<\mu_k<{{\alpha^2}\over \delta}\\[2mm]
S_{\delta^*(f^k(x))}(x_0), \ \ \mbox{for} \ \ \mu_k={{\alpha^2}\over \delta}\\[2mm]
S_{\delta}(x_0), \ \ \mbox{for all} \ \ \mu_k>{{\alpha^2}\over
\delta}
\end{array}
\right.$$ for any $m\geq k+1$. If $|f^k(x)-x_0|_p=\alpha$ for any
$k\in \N$ then $\{f^k(x), k\geq 1\}\subset S_\alpha(x_0)$.

1. By parts 2 and 3 of theorem we know that $S_r(x_0)$ is not an invariant of $f$ for
$r\geq\alpha$. Consequently, $SI(x_0)\subset U_{\alpha}(x_0)$.

By Lemma \ref{lf2} and part 1 of  Lemma \ref{l1} if
$r<\alpha$ and $x\in S_r(x_0)$ then
$|f^n(x)-x_0|_p=\varphi^n_{\alpha,\delta}(r)=r$, i.e., $f^n(x)\in
S_r(x_0)$. Hence $U_{\alpha}(x_0)\subset SI(x_0)$ and thus
$SI(x_0)=U_{\alpha}(x_0).$

Since $|x_0-x_1|_p=|x_0-x_2|_p=\alpha$ we have $x_i\not\in
U_{\alpha}(x_0), \, i=1,2$. From $f(U_{\alpha}(x_0))\subset
U_{\alpha}(x_0)$ it follows that $$U_{\alpha}(x_0)\cap\mathcal P=\{x\in
U_{\alpha}(x_0): \exists n\in N\cup\{0\}, \, f^n(x)\in\{x_1, x_2\}
\}=\emptyset.$$
By part 2 of theorem for $r>\alpha$ we have
$f(S_r(x_0))\subset U_{\alpha}(x_0)$. Thus
$$(\C_p\setminus{V_{\alpha}(x_0)})\cap\mathcal P=\emptyset,$$
i.e., $\mathcal P\subset S_{\alpha}(x_0)$.
\end{proof}

 By Lemma \ref{lf2} and Lemma
\ref{l2} we get

\begin{thm}\label{t2} If $\alpha<\delta$, then
 the $p$-adic dynamical system generated by function (\ref{fd}) has the following properties:
\begin{itemize}
\item[A.]
\begin{itemize}
\item[A.a)] $SI(x_0)=U_{{{\alpha^2}\over \delta}}(x_0)$.
\item[A.b)] $f(S_{\delta}(x_0))\subset S_{\delta}(x_0)$, i.e., $S_{\delta}(x_0)$ is an invariant.
\end{itemize}
\item[B.] If $r>{{\alpha^2}\over \delta}$ and $x\in S_{r}(x_0)\setminus\mathcal P$, then
$$\lim_{n\to\infty}f^n(x)\in
S_{\delta}(x_0).$$
\item[C.] If $r={{\alpha^2}\over \delta}$, then one of the
following two possibilities holds:
\begin{itemize}
\item[C.a)] There exists $k\in N$ and
$\mu_k<{{\alpha^2}\over\delta}$ such that $f^m(x)\in
S_{\mu_k}(x_0)$ for any $m\geq k$ and $f^m(x)\in
S_{{\alpha^2}\over\delta}(x_0)$ if $m\leq k-1$.
\item[C.b)] The trajectory $\{f^k(x), k\geq 1\}$ is a subset of
$S_{{\alpha^2}\over\delta}(x_0)$.
\end{itemize}
\end{itemize}
\end{thm}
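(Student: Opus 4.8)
The plan is to mirror the proof of Theorem \ref{t1}, replacing the real system $\varphi_{\alpha,\delta}$ by $\phi_{\alpha,\delta}$ and invoking Lemma \ref{l2} in place of Lemma \ref{l1}. Throughout I would use that, by Lemma \ref{lf2}, for $x\in S_r(x_0)$ one has $|f^n(x)-x_0|_p=\phi^n_{\alpha,\delta}(r)$, so the radii of the $p$-adic orbit are governed by the real map $\phi_{\alpha,\delta}$, whose dynamics are described in Lemma \ref{l2}.

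For A.a) I would first note that if $r<\frac{\alpha^2}{\delta}$, then $r\in{\rm Fix}(\phi_{\alpha,\delta})$ by part A of Lemma \ref{l2}; hence $|f^n(x)-x_0|_p=r$ for $x\in S_r(x_0)$, so every such sphere is invariant and $U_{\frac{\alpha^2}{\delta}}(x_0)\subset SI(x_0)$. For the reverse inclusion I would use parts B and C of the theorem (equivalently Lemma \ref{l2} B,C): no ball $U_r(x_0)$ with $r>\frac{\alpha^2}{\delta}$ can be a Siegel disk, since it contains spheres $S_\rho(x_0)$, $\frac{\alpha^2}{\delta}<\rho<r$, whose radius escapes to $\delta\ne\rho$ and which are therefore not invariant; hence $SI(x_0)=U_{\frac{\alpha^2}{\delta}}(x_0)$. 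For A.b) part A of Lemma \ref{l2} gives $\delta\in{\rm Fix}(\phi_{\alpha,\delta})$; since $\alpha<\delta$ the strong triangle inequality yields $|x-x_1|_p=|x-x_2|_p=\delta$ for $x\in S_\delta(x_0)$, so $x\notin\{x_1,x_2\}$ and formula (\ref{f2}) applies, giving $|f(x)-x_0|_p=\phi_{\alpha,\delta}(\delta)=\delta$; thus $S_\delta(x_0)$ is invariant. Part B is then immediate: for $r>\frac{\alpha^2}{\delta}$ and $x\in S_r(x_0)\setminus\mathcal P$, Lemma \ref{l2} B gives $\phi^n_{\alpha,\delta}(r)\to\delta$, and in fact $\phi^n_{\alpha,\delta}(r)=\delta$ for all large $n$, so $f^n(x)\in S_\delta(x_0)$ eventually; as $S_\delta(x_0)$ is invariant and closed, all limit points of the orbit lie in $S_\delta(x_0)$.

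The substantive part is C, which I would prove by the same inductive radius-bookkeeping used in part 3 of Theorem \ref{t1}. Fix $x\in S_{\frac{\alpha^2}{\delta}}(x_0)$. Since $\alpha<\delta$ we have $\frac{\alpha^2}{\delta}<\alpha$, so $|x-x_0|_p<\alpha=|x_0-x_1|_p=|x_0-x_2|_p$, and the denominator of (\ref{f2}) equals $\alpha^2$, while both summands of the numerator, $\frac{2a+d}{3}(x-x_0)$ and $(x_0-x_1)(x_0-x_2)$, have $p$-adic absolute value exactly $\alpha^2$ (using $\delta\cdot\frac{\alpha^2}{\delta}=\alpha^2$ and $\alpha\beta=\alpha^2$). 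Hence $|f(x)-x_0|_p\le\frac{\alpha^2}{\delta}$, with the inequality possibly strict due to cancellation between the two numerator terms. This dichotomy drives the induction: at the $k$-th step either $|f^k(x)-x_0|_p=\frac{\alpha^2}{\delta}$, and one repeats the estimate for $f^k(x)$ (legitimate because the orbit stays at radius $\le\frac{\alpha^2}{\delta}<\alpha$ and so never equals $x_1$ or $x_2$), or $|f^k(x)-x_0|_p=\mu_k<\frac{\alpha^2}{\delta}$, in which case $S_{\mu_k}(x_0)$ is invariant by A.a) and therefore $f^m(x)\in S_{\mu_k}(x_0)$ for all $m\ge k$, while $f^m(x)\in S_{\frac{\alpha^2}{\delta}}(x_0)$ for $m\le k-1$. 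The first alternative holding for every $k$ gives C.b), and the first occurrence of the second alternative gives C.a).

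I expect the only real obstacle to be the absolute-value estimate in C: one must verify carefully that the two numerator terms have equal norm $\alpha^2$, so that the strong triangle inequality yields $\le\frac{\alpha^2}{\delta}$ (rather than forcing equality), and simultaneously observe that keeping the orbit at radius $\le\frac{\alpha^2}{\delta}<\alpha$ keeps it away from the poles $x_1,x_2$, so that no intersection with $\mathcal P$ occurs and (\ref{f2}) may be applied at every step. Everything else reduces to quoting Lemma \ref{lf2} and Lemma \ref{l2}.
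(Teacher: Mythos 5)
Your proposal is correct and follows essentially the same route as the paper: parts A and B by quoting Lemma \ref{lf2} together with parts A and B of Lemma \ref{l2}, and part C by the same norm estimate on $S_{\alpha^2/\delta}(x_0)$ (both numerator terms of (\ref{f2}) having norm $\alpha^2$, denominator $\alpha^2$, hence $|f(x)-x_0|_p\le \alpha^2/\delta$) followed by the same step-by-step dichotomy. Your explicit remarks that the orbit at radius $\le\alpha^2/\delta<\alpha$ avoids $x_1,x_2$ and that $S_{\mu_k}(x_0)$ is invariant once $\mu_k<\alpha^2/\delta$ are exactly the points the paper uses, stated slightly more carefully.
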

\begin{proof}
A. By Lemma \ref{lf2} and part A of Lemma \ref{l2} we see that
 spheres $S_r(x_0)$ and $S_{\delta}(x_0)$ are invariant for $f$ for any $r<{{\alpha^2}\over\delta}$. Thus $SI(x_0)=U_{{\alpha^2}\over\delta}(x_0)$.

B. Follows from Lemma \ref{lf2} and part B of Lemma
\ref{l2}.

C. If $x\in S_{{\alpha^2}\over\delta}(x_0)$ then we have
$$
|f(x)-x_0|_p={\left|{{2a+d}\over
3}(x-x_0)+(x_0-x_1)(x_0-x_2)\right|_p\over{\delta}}\leq{{\alpha^2}\over\delta}.
$$
If $|f(x)-x_0|_p<{{\alpha^2}\over\delta}$ then there is
$\mu_1<{{\alpha^2}\over\delta}$ such that $f^m(x)\in
S_{\mu_1}(x_0)$ for any $m\geq 1$ (see part A of Lemma \ref{l2}). So in this case $k=1$.

If $|f(x)-x_0|_p={{\alpha^2}\over\delta}$ then we consider the
following
$$
|f^2(x)-x_0|_p={\left|{{2a+d}\over
3}(f(x)-x_0)+(x_0-x_1)(x_0-x_2)\right|_p\over{\delta}}\leq{{\alpha^2}\over\delta}.
$$
Now, if $|f^2(x)-x_0|_p<{{\alpha^2}\over\delta}$ then there is
$\mu_2<{{\alpha^2}\over\delta}$ such that $f^m(x)\in
S_{\mu_2}(x_0)$ for any $m\geq 2$. So in this case $k=2$.

If $|f^2(x)-x_0|_p={{\alpha^2}\over\delta}$ then we can continue
the argument and get the following inequality
$$|f^k(x)-x_0|_p\leq{{\alpha^2}\over\delta}.$$
Hence in each step we may have two possibilities:
$|f^k(x)-x_0|_p={{\alpha^2}\over\delta}$ or
$|f^k(x)-x_0|_p<{{\alpha^2}\over\delta}$. In case
$|f^k(x)-x_0|_p<{{\alpha^2}\over\delta}$ there exists $\mu_k$ such
that $f^m(x)\in S_{\mu_k}(x_0)$  for any $m\geq k$. If
$|f^k(x)-x_0|_p={{\alpha^2}\over\delta}$ for any $k\in \N$ then
$\{f^k(x), k\geq 1\}\subset S_{{\alpha^2}\over\delta}(x_0)$.
\end{proof}

We note that $\mathcal P$ has the following form $$\mathcal
P=\bigcup_{k=0}^{\infty}{\mathcal P_k},  \ \ \mathcal P_k=\{x\in
\C_p: f^k(x)\in\{x_1, x_2\}\}.$$

\begin{thm}\label{tpk} If $\alpha<\delta$, then
\begin{itemize}
\item[1.] $\mathcal P_k\neq\emptyset, \ \ for \ \ any \ \ k=0,1,2,...
\,.$
\item[2.] $\mathcal P_k\subset S_{r_k}(x_0)$,   where
$r_k=\alpha\cdot\left(\alpha\over\delta\right)^{{2^k-1}\over{2^k}}$,
$k=0,1,2,... \,.$
\end{itemize}
\end{thm}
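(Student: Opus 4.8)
The plan is to induct on $k$, using $\mathcal P_k=f^{-1}(\mathcal P_{k-1})$ — valid since $f^k(x)\in\{x_1,x_2\}$ is equivalent to $f(x)\in\mathcal P_{k-1}$ — together with the exact norm identity (\ref{f2}), whose norm dynamics is the map $\phi_{\alpha,\delta}$ of Lemma \ref{lf2}. The base case $k=0$ is immediate: $\mathcal P_0=\{x_1,x_2\}\ne\emptyset$, and since $|x_0-x_1|_p=|x_0-x_2|_p=\alpha$ it lies in $S_\alpha(x_0)=S_{r_0}(x_0)$, as $r_0=\alpha$. I will prove $\mathcal P_k\subset S_{r_k}(x_0)$ and $\mathcal P_k\ne\emptyset$ simultaneously, and in the course of the induction verify that each $r_k$ lies in the interval $(\alpha^2/\delta,\alpha]$ — the $r_k$ decrease from $r_0=\alpha$ toward $\alpha^2/\delta$ — which is precisely the regime where the branch $\phi_{\alpha,\delta}(r)=\delta r^2/\alpha^2$ is the active one.

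For the inclusion, suppose $\mathcal P_{k-1}\subset S_{r_{k-1}}(x_0)$ and take $x\in\mathcal P_k$, so that $f(x)\in\mathcal P_{k-1}$ and hence $|f(x)-x_0|_p=r_{k-1}$. Writing $r=|x-x_0|_p$, Lemma \ref{lf2} gives $|f(x)-x_0|_p=\phi_{\alpha,\delta}(r)$, so I must solve $\phi_{\alpha,\delta}(r)=r_{k-1}$. Because $r_{k-1}>\alpha^2/\delta$, the branches with $r\le\alpha^2/\delta$ (where $\phi_{\alpha,\delta}(r)\le\alpha^2/\delta$) are ruled out, and because $r_{k-1}\le\alpha<\delta$ the branches with $r\ge\alpha$ are ruled out too — for $r\ge\alpha$ one reads off directly from (\ref{f2}) that $|f(x)-x_0|_p\ge\delta$. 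Thus $r\in(\alpha^2/\delta,\alpha)$, where $\phi_{\alpha,\delta}(r)=\delta r^2/\alpha^2$, and $\delta r^2/\alpha^2=r_{k-1}$ forces $r=\alpha\sqrt{r_{k-1}/\delta}=:r_k$. A short computation confirms that this recursion with $r_0=\alpha$ has the closed form $r_k=\alpha(\alpha/\delta)^{(2^k-1)/2^k}$ and that $r_k\in(\alpha^2/\delta,\alpha)$, completing the inductive step for the inclusion.

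For nonemptiness I use that $\C_p$ is algebraically closed. Pick any $y\in\mathcal P_{k-1}$ (nonempty by induction); then $|y-x_0|_p=r_{k-1}\le\alpha$. Since $a-x_0=(2a+d)/3$ has $|a-x_0|_p=\delta>\alpha\ge r_{k-1}$, we get $y\ne a$, so the preimage equation $f(x)=y$, namely $(a-y)x^2+(b-yd)x+(c-ye)=0$, is genuinely quadratic and therefore has a root $x\in\C_p$. This $x$ cannot be $x_1$ or $x_2$ (those are poles, where $f$ does not take the finite value $y$), so $x\in f^{-1}(\mathcal P_{k-1})=\mathcal P_k$; hence $\mathcal P_k\ne\emptyset$.

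The recursion $r_k=\alpha\sqrt{r_{k-1}/\delta}$ and its closed-form solution are routine. The step that needs genuine care — and which I expect to be the main obstacle — is the boundary analysis in the inclusion: at the radii $r=\alpha$ and $r=\alpha^2/\delta$ the map $\phi_{\alpha,\delta}$ takes the point-dependent values $\delta'$ and $\alpha'$, so Lemma \ref{lf2} alone does not settle those cases and one must return to the identity (\ref{f2}) to exclude them and to guarantee that every $r_k$ stays strictly inside $(\alpha^2/\delta,\alpha)$, so that the quadratic branch $\delta r^2/\alpha^2$ is the only one that can realize the target value $r_{k-1}$.
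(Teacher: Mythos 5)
Your proof is correct and follows essentially the same route as the paper: induction on $k$, algebraic closedness of $\C_p$ together with $|a-x_0|_p=\delta>\alpha$ to keep the preimage equation genuinely quadratic, and the quadratic branch $\phi_{\alpha,\delta}(r)=\delta r^2/\alpha^2$ to derive the recursion forcing $r_k=\alpha(\alpha/\delta)^{(2^k-1)/2^k}$. The only (harmless) differences are that you obtain $a\neq y$ directly from the inductively established inclusion $\mathcal P_{k-1}\subset S_{r_{k-1}}(x_0)$ rather than from the invariance of $S_\delta(x_0)$, and you solve $\phi_{\alpha,\delta}(r)=r_{k-1}$ one step at a time with an explicit exclusion of the boundary branches, where the paper solves $\phi_{\alpha,\delta}^k(r_k)=\alpha$ in one shot.
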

\begin{proof} 1. In case $k=0$ we have $\mathcal P_0=\{x_1, x_2\}\neq\emptyset$.

Assume for $k=n$ that $\mathcal P_n=\{x\in \C_p: f^n(x)\in\{x_1,
x_2\}\}\neq\emptyset$.

Now for $k=n+1$ to prove $\mathcal P_{n+1}=\{x\in \C_p: f^{n+1}(x)\in\{x_1,
x_2\}\}\neq\emptyset$ we have to show that the following equation
has at least one solution:
$$f^{n+1}(x)=x_i, \ \ \mbox{for some} \ \ i=1,2.$$

By our assumption  $\mathcal
P_n\neq\emptyset$ there exists $y\in\mathcal P_n$ such
that $f^n(y)\in \{x_1,x_2\}$. Now we show that there exists $x$ such that $f(x)=y$.
We note that the equation $f(x)=y$ can be written as
\begin{equation}\label{qe}
(a-y)x^2+(b-dy)x+\left({{a-d}\over 3}\right)^3-y\left[{{(a-d)^2}\over
3}+b\right]=0.
\end{equation}
We have  $|a-x_0|_p=\left|{{2a+d}\over 3}\right|_p=\delta$,
consequently, $a\in S_{\delta}(x_0)$.
Since $x_1,x_2\in S_\alpha(x_0)$ and by the part A.b)
of Theorem \ref{t2} we know that $S_{\delta}(x_0)$ is an invariant we
get $\mathcal P\cap S_\delta(x_0)=\emptyset$, for $\alpha<\delta$.
Thus $a\not\in\mathcal P$, consequently, $a-y\ne 0$.
Since $\C_p$ is algebraic closed the equation (\ref{qe})
has two solutions, say $x=t_1,t_2$. For $x\in \{t_1,t_2\}$ we get
$$f^{n+1}(x)=f^n(f(x))=f^n(y)\in \{x_1,x_2\}.$$ Hence
 $\mathcal P_{n+1}\ne\emptyset$. Therefore, by induction we get
$$\mathcal P_k\neq\emptyset, \ \ \mbox{for any} \ \ k=0,1,2,... \,.$$

2. We know $|x_0-x_1|_p=|x_0-x_2|_p=\alpha$. By condition $\alpha<\delta$, we get $\alpha>{{\alpha^2}\over\delta}$.
By (\ref{f2}) and part B of Lemma \ref{l2} for $x\in
S_{\alpha}(x_0)$, $x\neq x_{1,2}$ we have
$$\lim_{n\to\infty}f^n(x)\in S_{\delta}(x_0),$$
i.e., $S_{\alpha}(x_0)\cap\mathcal P=\{x_1, x_2\}=\mathcal
P_0$.
Denoting $r_0=\alpha$ we write $\mathcal P_0\subset S_{r_0}(x_0)$.
Now to find spheres containing the solutions of the equations $$f^k(x)=x_i, \ \ k=1,2,3,..., \ \ i=1,2.$$
We write the last equations in the form
$$f^k(x)-x_0=x_i-x_0, \ \ k=1,2,3,..., \ \ i=1,2.$$
For each  $k$ we want to find some $r_k$ such that the solution $x$ of $f^k(x)=x_i$, (for some $i=1,2$) belongs to $S_{r_k}(x_0)$, i.e.,  $x\in S_{r_k}(x_0)$. By Lemma \ref{lf2} we should have
$$\phi_{\alpha,\delta}^k(r_k)=\alpha.$$
Now if we show that the last equation has unique solution $r_k$ for each $k$, then
we get
$$\mathcal P_k=\{x\in \C_p: f^k(x)=x_i, i=1,2\}\subset
S_{r_k}(x_0).$$

By parts A and C of Lemma \ref{l2} we have
${{\alpha^2}\over\delta}<r_k\leq \alpha$.
Moreover, we have $r_0=\alpha$ and
${{\alpha^2}\over\delta}<r_k<\alpha$ for each $k=1,2,...$
For such $r_k$, by definition of $\phi_{\alpha,\delta}(r)$,  we have
$$\phi_{\alpha,\delta}(r_k)={{\delta r_k^2}\over{\alpha^2}}.$$
Thus $\phi_{\alpha,\delta}^k(r_k)=\alpha$ has the form
$$\phi_{\alpha,\delta}^k(r_k)={{\delta^{2^k-1}}\over{\alpha^{2(2^k-1)}}}r^{2^k}_k=\alpha$$
consequently
$$r^{2^k}_k=\alpha^{2^k}\cdot\left[\left({\alpha\over\delta}\right)^{{2^k-1}\over{2^k}}\right]^{2^k}.$$
Taking  $2^k$-root
we obtain unique positive solution:
$r_k=\alpha\cdot\left({\alpha\over\delta}\right)^{{2^k-1}\over{2^k}}$.
\end{proof}

If $\alpha=\delta$, then we denote
$$\hat\alpha(x)=|f(x)-x_0|_p, \
\ {\rm for} \ \ x\in S_{\alpha}(x_0).$$

Using Lemma \ref{lf2} and Lemma \ref{l3} we get

\begin{thm}\label{t3} If $\alpha=\delta$, then
 the $p$-adic dynamical system generated by function (\ref{fd}) has the following properties:
\begin{itemize}
\item[I.]
\begin{itemize}
\item[I.i)] $SI(x_0)=U_{\alpha}(x_0)$.
\item[I.ii)] $U_{\alpha}(x_0)\cap{\mathcal P}=\emptyset$.
\end{itemize}
\item[II.] If $r>\alpha$ and $x\in S_r(x_0)$, then $f(x)\in
S_{\alpha}(x_0)$.
\item[III.] Let $f^k(x)\in S_{\alpha}(x_0)\setminus\mathcal P$ for some $k=0,1,2,...$,
then
$$f^m(x)\in\left\{\begin{array}{lll}
S_{\hat\alpha(f^k(x))}(x_0), \ \ \mbox{if} \ \ \hat\alpha(f^k(x))\geq\alpha, \ \ m=k+1\\[2mm]
S_{\alpha}(x_0), \ \ \ \ \ \ \ \ \ \mbox{if} \ \ \hat\alpha(f^k(x))>\alpha, \ \ m=k+2\\[2mm]
S_{\hat\alpha(f^k(x))}(x_0), \ \ \mbox{if} \ \
\hat\alpha(f^k(x))<\alpha, \ \ \forall m\geq k+1.
\end{array}
\right.$$
\end{itemize}
\end{thm}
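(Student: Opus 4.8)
The plan is to reduce everything to the one-dimensional real dynamical system generated by $\psi_{\alpha}$, exactly as in the proofs of Theorems \ref{t1} and \ref{t2}. Since we are in the subcase $\alpha=\beta$ with $\alpha=\delta$, Lemma \ref{lf2} gives, for $x\in S_r(x_0)$, the identity $|f^n(x)-x_0|_p=\psi_{\alpha}^n(r)$. Thus each assertion about the $p$-adic orbit translates into a statement about the orbit of $r=|x-x_0|_p$ under $\psi_{\alpha}$, and the three regimes $r<\alpha$, $r=\alpha$, $r>\alpha$ of Lemma \ref{l3} will supply the three regimes of the theorem.

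For part I.i), I would first combine Lemma \ref{lf2} with part I of Lemma \ref{l3}: if $r<\alpha$ and $x\in S_r(x_0)$, then $r$ is a fixed point of $\psi_{\alpha}$, so $|f^n(x)-x_0|_p=\psi_{\alpha}^n(r)=r$ for all $n$, i.e. every sphere $S_r(x_0)$ with $r<\alpha$ is invariant; hence $U_{\alpha}(x_0)$ is a Siegel disk and $U_{\alpha}(x_0)\subset SI(x_0)$. For the reverse inclusion I would invoke part II (proved below): for every $r>\alpha$ the sphere $S_r(x_0)$ is carried into $S_{\alpha}(x_0)\neq S_r(x_0)$ and is therefore not invariant, so no ball of radius exceeding $\alpha$ can be a Siegel disk; hence $SI(x_0)\subset U_{\alpha}(x_0)$, and equality follows. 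For part I.ii), since $\alpha=\beta$ we have $|x_0-x_1|_p=|x_0-x_2|_p=\alpha$, so $x_1,x_2\in S_{\alpha}(x_0)$ and in particular $x_1,x_2\notin U_{\alpha}(x_0)$; as $U_{\alpha}(x_0)$ is invariant by part I.i), the forward orbit of any point of $U_{\alpha}(x_0)$ remains inside $U_{\alpha}(x_0)$ and can never reach $x_1$ or $x_2$, whence $U_{\alpha}(x_0)\cap\mathcal P=\emptyset$.

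Part II is immediate from Lemma \ref{lf2} and part II of Lemma \ref{l3}: for $r>\alpha$ one has $\psi_{\alpha}(r)=\alpha$, so $|f(x)-x_0|_p=\alpha$, i.e. $f(x)\in S_{\alpha}(x_0)$. For part III I would set $y=f^k(x)\in S_{\alpha}(x_0)\setminus\mathcal P$ and evaluate $|f(y)-x_0|_p=\hat\alpha(y)=\hat\alpha(f^k(x))$ directly from formula (\ref{f2}); by definition this gives $f^{k+1}(x)\in S_{\hat\alpha(f^k(x))}(x_0)$ as soon as $\hat\alpha(f^k(x))\geq\alpha$, which is the first line. If moreover $\hat\alpha(f^k(x))>\alpha$, then $f^{k+1}(x)$ sits on a sphere of radius strictly larger than $\alpha$, and applying part II once more returns it to $S_{\alpha}(x_0)$, giving $f^{k+2}(x)\in S_{\alpha}(x_0)$; this is the $p$-adic shadow of $\psi_{\alpha}^2(\alpha)=\alpha$ in part III.ii of Lemma \ref{l3}. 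Finally, if $\hat\alpha(f^k(x))<\alpha$, then $f^{k+1}(x)$ lands on the sphere $S_{\hat\alpha(f^k(x))}(x_0)$ of radius below $\alpha$, which is invariant by part I.i); hence $f^m(x)\in S_{\hat\alpha(f^k(x))}(x_0)$ for all $m\geq k+1$, matching part III.i of Lemma \ref{l3}.

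The routine heart of the argument is the translation dictionary of Lemma \ref{lf2}; once it is in place each claim is a short transcription. The only genuinely delicate point is the analysis on the critical sphere $S_{\alpha}(x_0)$ in part III, where the dynamics is not determined by the radius alone but by the pointwise value $\hat\alpha(f^k(x))$ computed from (\ref{f2}). Here, because $\alpha=\delta$ forces the two terms $\tfrac{2a+d}{3}(y-x_0)$ and $(x_0-x_1)(x_0-x_2)$ in the numerator of (\ref{f2}) to have the same $p$-adic absolute value $\alpha^2$, cancellation is possible and all three regimes $\hat\alpha\gtrless\alpha$ genuinely occur; the case split (together with the boundary value $\hat\alpha=\alpha$, which keeps the point on $S_{\alpha}(x_0)$) is exactly what separates the three conclusions, and care is needed to confirm that the $\hat\alpha>\alpha$ branch bounces back to $S_{\alpha}(x_0)$ after two steps rather than escaping.
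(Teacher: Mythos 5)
Your proposal is correct and follows essentially the same route as the paper: reduce the $p$-adic statement to the real dynamical system $\psi_{\alpha}$ via Lemma \ref{lf2}, read off parts II--III from Lemma \ref{l3} (with the pointwise quantity $\hat\alpha(f^k(x))$ handling the critical sphere), and obtain part I by combining the invariance of the spheres $S_r(x_0)$, $r<\alpha$, with the non-invariance for $r>\alpha$ and the fact that $x_{1},x_{2}\in S_{\alpha}(x_0)$ lie outside the invariant ball $U_{\alpha}(x_0)$. The paper's own proof is just a terser version of exactly this argument.
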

\begin{proof} Parts II-III of theorem easily follow from parts II-III of Lemma \ref{lf2}
and Lemma \ref{l3}.

I.  By the part I of Lemma \ref{lf2} and Lemma \ref{l3},
if $r<\alpha$ and $x\in S_r(x_0)$ then
$|f^n(x)-x_0|_p=\psi^n(r)=r$, i.e., for  $n\geq 1$ we have $f^n(x)\in
S_r(x_0)$. Consequently, $U_{\alpha}(x_0)\subset SI(x_0)$.

By the parts II-III of theorem we know that if $r\geq\alpha$
then $S_r(x_0)$ is not invariant for $f$. Hence $SI(x_0)\subset U_{\alpha}(x_0)$.
Therefore, $SI(x_0)= U_{\alpha}(x_0)$.

Since $|x_0-x_1|_p=|x_0-x_2|_p=\alpha$ we have $x_{1,2}\not\in
U_{\alpha}(x_0)$. Moreover, from $f(U_{\alpha}(x_0))\subset U_{\alpha}(x_0)$
we get
$$U_{\alpha}(x_0)\cap\mathcal P=\{x\in U_{\alpha}(x_0):
\exists n\in \N\cup\{0\}, f^n(x)\in\{x_1, x_2\} \}=\emptyset.$$
\end{proof}

\subsection{Case: $\alpha<\beta$.} In this case our arguments are similar to the ones used for the case $\alpha=\beta$, therefore we give results of this subsection without proofs.

Consider the following functions:

For $\delta<\alpha$ define the function
$\varphi_{\alpha,\beta,\delta}: [0,+\infty)\to [0,+\infty)$ by
$$\varphi_{\alpha,\beta,\delta}(r)=\left\{\begin{array}{lllllll}
r, \ \ {\rm if} \ \ r<\alpha\\[2mm]
\alpha^*, \ \ {\rm if} \ \ r=\alpha\\[2mm]
\alpha, \ \  {\rm if} \ \ \alpha<r<\beta\\[2mm]
\beta^*, \ \ {\rm if} \ \ r=\beta\\[2mm]
{{\alpha\beta}\over r}, \ \ {\rm if} \ \ \beta<r<{{\alpha\beta}\over \delta}\\[2mm]
\delta^*, \ \ {\rm if} \ \ r={{\alpha\beta}\over \delta}\\[2mm]
\delta, \ \ \ \ {\rm if} \ \ r>{{\alpha\beta}\over \delta}
\end{array}
\right.
$$
where $\alpha^*$, $\beta^*$ and $\delta^*$ some positive numbers
with $\alpha^*\geq\alpha$, $\beta^*\geq\alpha$ and $\delta^*\leq
\delta$.

For $\alpha=\delta$ define the function $\phi_{\alpha,\beta}:
[0,+\infty)\to [0,+\infty)$ by
$$\phi_{\alpha,\beta}(r)=\left\{\begin{array}{lllll}
r, \ \ {\rm if} \ \ r<\alpha\\[2mm]
\alpha', \ \ {\rm if} \ \ r=\alpha\\[2mm]
\alpha, \ \ {\rm if} \ \ \alpha<r<\beta\\[2mm]
\beta', \ \ {\rm if} \ \ r=\beta\\[2mm]
\alpha, \ \ \ \ {\rm if} \ \ r>\beta
\end{array}
\right.
$$
where $\alpha'$ and $\beta'$ some positive numbers with
$\alpha'\geq\alpha$, $\beta'>0$.

For $\alpha<\delta<\beta$ define the function
$\psi_{\alpha,\beta,\delta}: [0,+\infty)\to [0,+\infty)$ by
$$\psi_{\alpha,\beta,\delta}(r)=\left\{\begin{array}{lllllll}
r, \ \ {\rm if} \ \ r<\alpha\\[2mm]
\hat\alpha, \ \ {\rm if} \ \ r=\alpha\\[2mm]
\alpha, \ \  {\rm if} \ \ \alpha<r<{{\alpha\beta}\over\delta}\\[2mm]
\hat\delta, \ \ {\rm if} \ \ r={{\alpha\beta}\over\delta}\\[2mm]
{\delta\over\beta}r, \ \ {\rm if} \ \ {{\alpha\beta}\over\delta}<r<\beta\\[2mm]
\hat\beta, \ \ {\rm if} \ \ r=\beta\\[2mm]
\delta, \ \ {\rm if} \ \ r>\beta
\end{array}
\right.
$$
where $\hat\alpha$, $\hat\beta$ and $\hat\delta$ some positive
numbers with $\hat\alpha\geq\alpha$, $\hat\beta\geq\delta$ and
$\hat\delta\leq\alpha$.

For $\delta=\beta$ define the function $\eta_{\alpha,\beta}:
[0,+\infty)\to [0,+\infty)$ by
$$\eta_{\alpha,\beta}(r)=\left\{\begin{array}{lllll}
r, \ \ {\rm if} \ \ r<\alpha\\[2mm]
\bar\alpha, \ \ {\rm if} \ \ r=\alpha\\[2mm]
r, \ \ {\rm if} \ \ \alpha<r<\beta\\[2mm]
\bar\beta, \ \ {\rm if} \ \ r=\beta\\[2mm]
\beta, \ \ \ \ {\rm if} \ \ r>\beta
\end{array}
\right.
$$
where $\bar\alpha$ and $\bar\beta$ some positive numbers with
$\bar\alpha>0$, $\bar\beta\geq\beta$.

For $\beta<\delta$ define the function
$\zeta_{\alpha,\beta,\delta}: [0,+\infty)\to [0,+\infty)$ by
$$\zeta_{\alpha,\beta,\delta}(r)=\left\{\begin{array}{lllllll}
r, \ \ {\rm if} \ \ r<{{\alpha\beta}\over\delta}\\[2mm]
\tilde\delta, \ \ {\rm if} \ \ r={{\alpha\beta}\over\delta}\\[2mm]
{{\delta r^2}\over{\alpha\beta}}, \ \  {\rm if} \ \ {{\alpha\beta}\over\delta}<r<\alpha\\[2mm]
\tilde\alpha, \ \ {\rm if} \ \ r=\alpha\\[2mm]
{\delta\over\beta}r, \ \ {\rm if} \ \ \alpha<r<\beta\\[2mm]
\tilde\beta, \ \ {\rm if} \ \ r=\beta\\[2mm]
\delta, \ \ {\rm if} \ \ r>\beta
\end{array}
\right.
$$
where $\tilde\alpha$, $\tilde\beta$ and $\tilde\delta$ some
positive numbers with
$\tilde\alpha\geq{{\alpha\delta}\over\beta}$,
$\tilde\beta\geq\delta$ and
$\tilde\delta\leq{{\alpha\beta}\over\delta}$.

Using the formula (\ref{f2}) we easily get the following:

\begin{lemma}\label{lf22} If $\alpha<\beta$ and $x\in S_r(x_0)\setminus\mathcal P$, then the following formula
holds for function (\ref{fd})
$$|f^n(x)-x_0|_p=\left\{\begin{array}{lllll}
\varphi_{\alpha,\beta,\delta}^n(r), \ \ \mbox{if} \ \ \delta<\alpha\\[2mm]
\phi_{\alpha,\beta}^n(r), \ \  \ \ \mbox{if} \ \ \alpha=\delta\\[2mm]
\psi_{\alpha,\beta,\delta}^n(r), \ \ \mbox{if} \ \ \alpha<\delta<\beta\\[2mm]
\eta_{\alpha,\beta}^n(r), \ \ \ \ \mbox{if} \ \ \delta=\beta\\[2mm]
\zeta_{\alpha,\beta,\delta}^n(r), \ \ \mbox{if} \ \ \beta<\delta
\end{array}\right.$$
\end{lemma}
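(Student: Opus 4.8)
The plan is to reduce the statement to a single application of the identity (\ref{f2}) followed by an induction on $n$, exactly as in Lemma \ref{lf2}. Fix $x\in S_r(x_0)\setminus\mathcal P$, so that $|x-x_0|_p=r$. The right-hand side of (\ref{f2}) is a product of three norms whose individual ingredients all have known absolute values: in the numerator $\left|\frac{2a+d}{3}(x-x_0)\right|_p=\delta r$ and $|(x_0-x_1)(x_0-x_2)|_p=\alpha\beta$, while the two denominator factors are sums $(x-x_0)+(x_0-x_1)$ and $(x-x_0)+(x_0-x_2)$ whose summands have norms $r,\alpha$ and $r,\beta$ respectively. Thus computing $|f(x)-x_0|_p$ is entirely a matter of evaluating three ultrametric sums of the form $|u+v|_p$ with $|u|_p,|v|_p$ prescribed.

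The first step is the single-step formula. By the sharp triangle inequality, each of these sums equals the maximum of the two summand-norms whenever those differ, so on the open intervals cut out by the breakpoints $r=\alpha$, $r=\beta$ and $\delta r=\alpha\beta$ (that is $r=\frac{\alpha\beta}{\delta}$) the value of $|f(x)-x_0|_p$ is given exactly by a ratio of maxima, which I would compute interval by interval. Exactly at a breakpoint the two summand-norms coincide and the non-Archimedean inequality yields only a bound $|u+v|_p\le|u|_p$; this is precisely the source of the undetermined boundary values $\alpha^*,\beta^*,\delta^*$ (and $\alpha',\beta',\hat\alpha,\hat\beta,\hat\delta,\bar\alpha,\bar\beta,\tilde\alpha,\tilde\beta,\tilde\delta$), and the direction of the inequality in each case reproduces the constraints listed under the definitions of the five functions (for instance at $r=\frac{\alpha\beta}{\delta}$ one obtains $\delta^*\le\delta$).

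The only genuine bookkeeping is that the ordering of the three breakpoints $\alpha$, $\beta$, $\frac{\alpha\beta}{\delta}$ depends on the position of $\delta$ relative to $\alpha<\beta$, and this is what splits the lemma into its five regimes: $\delta<\alpha$ gives $\alpha<\beta<\frac{\alpha\beta}{\delta}$ (function $\varphi_{\alpha,\beta,\delta}$); $\delta=\alpha$ gives $\frac{\alpha\beta}{\delta}=\beta$ (function $\phi_{\alpha,\beta}$); $\alpha<\delta<\beta$ gives $\alpha<\frac{\alpha\beta}{\delta}<\beta$ (function $\psi_{\alpha,\beta,\delta}$); $\delta=\beta$ gives $\frac{\alpha\beta}{\delta}=\alpha$ (function $\eta_{\alpha,\beta}$); and $\beta<\delta$ gives $\frac{\alpha\beta}{\delta}<\alpha<\beta$ (function $\zeta_{\alpha,\beta,\delta}$). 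In each regime the interval-by-interval computation of the previous paragraph matches, piece for piece, the one-variable function named in the statement; I would display one regime, say $\delta<\alpha$, in full and assert the remaining four by the same routine.

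Finally I would pass from one step to $n$ steps by induction. Assuming $|f^{n-1}(x)-x_0|_p=g^{n-1}(r)$, where $g$ is the relevant function and $\rho:=g^{n-1}(r)$, the point $y=f^{n-1}(x)$ lies on $S_\rho(x_0)$ and, because $x\notin\mathcal P$ forces $y\notin\{x_1,x_2\}$, identity (\ref{f2}) applies to $y$ and the single-step formula gives $|f^n(x)-x_0|_p=g(\rho)=g^n(r)$. The one delicate point, which I expect to be the main obstacle rather than the ultrametric algebra, is the status of the boundary values: the starred, barred and tilded symbols are not universal constants but are determined by the particular trajectory once it lands on one of the three critical spheres, so the iterated identity $|f^n(x)-x_0|_p=g^n(r)$ must be read with those values fixed by the orbit of $x$. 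Keeping this interpretation consistent through the induction, and matching it to the conventions already adopted in Lemma \ref{lf2}, is the only thing requiring care.
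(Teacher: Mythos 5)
Your proposal is correct and is exactly the argument the paper intends: the paper states this lemma without proof ("Using the formula (\ref{f2}) we easily get the following", the whole subsection being declared analogous to the $\alpha=\beta$ case of Lemma \ref{lf2}), and your interval-by-interval evaluation of the three ultrametric sums in (\ref{f2}), organized by the ordering of the breakpoints $\alpha$, $\beta$, $\alpha\beta/\delta$, followed by induction on $n$, is precisely that omitted computation. Your remark that the starred/barred/tilded boundary values are orbit-dependent rather than constants is a correct reading of the conventions already set in the $\alpha=\beta$ case.
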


Thus the $p$-adic dynamical system $f^n(x), n\geq 1, x\in
\C_p\setminus{\mathcal P}$ is related to the real dynamical
systems generated by $\varphi_{\alpha,\beta,\delta}$,
$\phi_{\alpha,\beta}$, $\psi_{\alpha,\beta,\delta}$,
$\eta_{\alpha,\beta}$ and $\zeta_{\alpha,\beta,\delta}$.

The following simple lemmas are devoted to properties of these (real) dynamical systems.

\begin{lemma}\label{lf221} If $\delta<\alpha$, then the dynamical system generated by $\varphi_{\alpha,\beta,\delta}(r)$ has the following properties:
\begin{itemize}
\item[1.] ${\rm Fix}(\varphi_{\alpha,\beta,\delta})=\{r: 0\leq r<\alpha\}\cup\{\alpha:\, if \,
\alpha^*=\alpha\}\cup\{\beta:\, if \, \beta^*=\beta\}$.
\item[2.] If $\alpha<r<\beta$, then $\varphi_{\alpha,\beta,\delta}(r)=\alpha.$
\item[3.] If $r>\beta$, then
$$\varphi_{\alpha,\beta,\delta}^n(r)=\left\{\begin{array}{lll}
{{\alpha\beta}\over r}, \ \ \mbox{for all} \ \ \beta<r<{{\alpha\beta}\over \delta}\\[2mm]
\delta^*, \ \ \mbox{for} \ \ r={{\alpha\beta}\over \delta}\\[2mm]
\delta, \ \ \ \ \mbox{for all} \ \ r>{{\alpha\beta}\over \delta}
\end{array}
\right.$$ $\mbox{for any} \ \ n\geq 1$.
\item[4.] Let $r=\alpha$.
\begin{itemize}
\item[4.1)] If $\alpha<\alpha^*<\beta$, then
$\varphi_{\alpha,\beta,\delta}^2(\alpha)=\alpha$.
\item[4.2)] If $\alpha^*=\beta$, then $\varphi_{\alpha,\beta,\delta}(\alpha)=\beta$.
\item[4.3)] If $\alpha^*>\beta$, then
$$\varphi_{\alpha,\beta,\delta}^n(\alpha)=\left\{\begin{array}{lll}
{{\alpha\beta}\over \alpha^*}, \ \ \mbox{if} \ \ \beta<\alpha^*<{{\alpha\beta}\over \delta}\\[2mm]
\delta^*, \ \ \mbox{if} \ \ \alpha^*={{\alpha\beta}\over \delta}\\[2mm]
\delta, \ \ \ \ \mbox{if} \ \ \alpha^*>{{\alpha\beta}\over \delta}
\end{array}
\right.$$ $\mbox{for any} \ \ n\geq 2$.
\end{itemize}
\item[5.] Let $r=\beta$.
\begin{itemize}
\item[5.1)] If $\alpha<\beta^*<\beta$, then
$\varphi_{\alpha,\beta,\delta}^2(\beta)=\alpha$.
\item[5.2)] If $\beta^*>\beta$, then
$$\varphi_{\alpha,\beta,\delta}^n(\beta)=\left\{\begin{array}{lll}
{{\alpha\beta}\over \beta^*}, \ \ \mbox{if} \ \ \beta<\beta^*<{{\alpha\beta}\over \delta}\\[2mm]
\delta^*, \ \ \mbox{if} \ \ \beta^*={{\alpha\beta}\over \delta}\\[2mm]
\delta, \ \ \ \ \mbox{if} \ \ \beta^*>{{\alpha\beta}\over \delta}
\end{array}
\right.$$ $\mbox{for any} \ \ n\geq 2$.
\end{itemize}
\end{itemize}
\end{lemma}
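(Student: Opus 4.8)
The plan is to follow the template of the proof of Lemma \ref{l1}: analyze the one-step map $\varphi_{\alpha,\beta,\delta}$ branch by branch, constantly using the ordering $\delta<\alpha<\beta$, which also forces $\beta<\alpha\beta/\delta$ (since $\alpha>\delta$ gives $\alpha\beta/\delta>\beta$). First I would settle part 1 by inspecting the equation $\varphi_{\alpha,\beta,\delta}(r)=r$ on each of the seven pieces. On $[0,\alpha)$ the map is the identity, so every such $r$ is fixed; at $r=\alpha$ and $r=\beta$ the equation reduces to $\alpha^*=\alpha$ and $\beta^*=\beta$; on $(\alpha,\beta)$ the value is the constant $\alpha<r$, so there are no fixed points; on $(\beta,\alpha\beta/\delta)$ the equation $\alpha\beta/r=r$ would force $r=\sqrt{\alpha\beta}$, but $\alpha<\beta$ yields $\sqrt{\alpha\beta}<\beta$, so this root lies outside the branch and contributes nothing; finally at $r=\alpha\beta/\delta$ and on $(\alpha\beta/\delta,\infty)$ the images $\delta^*\le\delta$ and $\delta$ are both $<\alpha\le r$, hence not fixed. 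This gives exactly the claimed fixed-point set. Part 2 is immediate from the definition.

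The key step is part 3, after which parts 4 and 5 become bookkeeping. For $r>\beta$ I would show that one application of $\varphi_{\alpha,\beta,\delta}$ already lands inside the identity region $[0,\alpha)$: for $\beta<r<\alpha\beta/\delta$ we have $\alpha\beta/r\in(\delta,\alpha)$; for $r=\alpha\beta/\delta$ we have $\delta^*\le\delta<\alpha$; and for $r>\alpha\beta/\delta$ we have $\delta<\alpha$. Since every point of $[0,\alpha)$ is fixed by part 1, the image $\varphi_{\alpha,\beta,\delta}(r)$ is a fixed point, so $\varphi_{\alpha,\beta,\delta}^{\,n}(r)=\varphi_{\alpha,\beta,\delta}(r)$ for all $n\ge 1$, which is precisely the three-line formula of part 3.

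For parts 4 and 5 I would simply track the first iterate and then invoke parts 2 and 3. At $r=\alpha$ one has $\varphi_{\alpha,\beta,\delta}(\alpha)=\alpha^*$, and the trichotomy is governed by where $\alpha^*$ sits: if $\alpha<\alpha^*<\beta$ then part 2 returns it to $\alpha$, so $\varphi_{\alpha,\beta,\delta}^{\,2}(\alpha)=\alpha$; if $\alpha^*=\beta$ there is nothing further to compute; and if $\alpha^*>\beta$ then applying part 3 to the point $\alpha^*$ gives the stabilized value from the second iterate onward. Part 5 is the identical argument with $\beta^*$ replacing $\alpha^*$. The only genuine care needed anywhere is keeping the chain $\delta<\alpha<\beta<\alpha\beta/\delta$ straight so that each computed image is placed in its correct branch; there is no analytic obstacle beyond this case management, so I would present parts 4 and 5 tersely as corollaries of parts 2 and 3, exactly as is done for Lemma \ref{l1}.
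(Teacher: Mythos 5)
Your proof is correct and is exactly the argument the paper intends: the paper omits proofs in this subsection, stating only that they are "similar to the ones used for the case $\alpha=\beta$," and your branch-by-branch analysis modeled on the proof of Lemma \ref{l1} (identity region $[0,\alpha)$ absorbs every first iterate from $r>\beta$, after which parts 4 and 5 reduce to parts 2 and 3) is precisely that adaptation. The bookkeeping, including the exclusion of $r=\sqrt{\alpha\beta}$ from the branch $(\beta,\alpha\beta/\delta)$ and the chain $\delta<\alpha<\beta<\alpha\beta/\delta$, is handled correctly.
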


\begin{lemma}\label{lf222} If $\alpha=\delta$, then the
dynamical system generated by $\phi_{\alpha,\beta}(r)$ has the following properties:
\begin{itemize}
\item[1.] ${\rm Fix}(\phi_{\alpha,\beta})=\{r: 0\leq r<\alpha\}\cup\{\alpha:\, if \,
\alpha'=\alpha\}\cup\{\beta: \, if \, \beta'=\beta\}$.
\item[2.] If $r>\alpha$ and $r\neq\beta$, then
$\phi_{\alpha,\beta}(r)=\alpha.$
\item[3.] Let $r=\alpha$ and $\alpha'>\alpha$.
\begin{itemize}
\item[3.1)] If $\alpha'\neq\beta$, then $\phi_{\alpha,\beta}^2(\alpha)=\alpha.$
\item[3.2)] If $\alpha'=\beta$, then $\phi_{\alpha,\beta}(\alpha)=\beta.$
\end{itemize}
\item[4.] If $r=\beta$
\begin{itemize}
\item[4.1)] If $\beta'<\alpha$, then
$\phi_{\alpha,\beta}^n(\beta)=\beta'$ for all $n\geq 1$.
\item[4.2)] If $\beta'=\alpha$, then $\phi_{\alpha,\beta}(\beta)=\alpha$.
\item[4.3)] If $\beta'>\alpha$ and $\beta'\neq\beta$, then
$\phi_{\alpha,\beta}^2(\beta)=\alpha.$
\end{itemize}
\end{itemize}
\end{lemma}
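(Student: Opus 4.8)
The plan is to treat Lemma \ref{lf222} exactly as the preceding lemmas for $\varphi_{\alpha,\delta}$, $\phi_{\alpha,\delta}$ and $\psi_{\alpha}$: everything reduces to a finite case analysis of the piecewise definition of $\phi_{\alpha,\beta}$, together with one or two compositions. No analytic input beyond the explicit formula is required, so the work is purely combinatorial.

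First I would prove part 1 by solving $\phi_{\alpha,\beta}(r)=r$ separately on each piece of the definition. On $[0,\alpha)$ the map is the identity, so every such $r$ is fixed; at $r=\alpha$ one has $\phi_{\alpha,\beta}(\alpha)=\alpha'$, which equals $\alpha$ precisely when $\alpha'=\alpha$; on $(\alpha,\beta)$ and on $(\beta,+\infty)$ the value is the constant $\alpha$, strictly smaller than $r$, so there is no fixed point there; and at $r=\beta$ one has $\phi_{\alpha,\beta}(\beta)=\beta'$, which equals $\beta$ exactly when $\beta'=\beta$. Collecting these cases gives the asserted fixed-point set. Part 2 is then immediate: for $\alpha<r<\beta$ and for $r>\beta$ the definition reads off $\phi_{\alpha,\beta}(r)=\alpha$, so the only excluded value is $r=\beta$.

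Parts 3 and 4 follow by composing a single explicit step with parts 1 and 2. For part 3, from $r=\alpha$ the first iterate is $\phi_{\alpha,\beta}(\alpha)=\alpha'$ with $\alpha'>\alpha$; if $\alpha'\neq\beta$ then $\alpha'$ lies in $(\alpha,\beta)\cup(\beta,+\infty)$, so part 2 gives $\phi_{\alpha,\beta}(\alpha')=\alpha$ and hence $\phi_{\alpha,\beta}^2(\alpha)=\alpha$, while if $\alpha'=\beta$ the first iterate already equals $\beta$. Likewise for part 4 the first iterate is $\phi_{\alpha,\beta}(\beta)=\beta'$: when $\beta'<\alpha$ the point $\beta'$ is fixed by part 1, so $\phi_{\alpha,\beta}^n(\beta)=\beta'$ for all $n\geq 1$; when $\beta'=\alpha$ nothing further is needed; and when $\beta'>\alpha$ with $\beta'\neq\beta$, part 2 applied to $\beta'$ yields $\phi_{\alpha,\beta}^2(\beta)=\alpha$.

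There is no genuine obstacle here; the content is entirely bookkeeping. The only point requiring care is to track, in parts 3 and 4, into which branch of the definition the intermediate values $\alpha'$ and $\beta'$ fall, since the standing constraints $\alpha'\geq\alpha$ and $\beta'>0$ leave several sub-cases open, and each sub-case must be matched to the correct branch of part 1 or part 2 before composing. I would therefore establish parts 1 and 2 first and then invoke them mechanically, mirroring the structure already used in Lemmas \ref{l1}--\ref{l3}.
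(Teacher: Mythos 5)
Your proposal is correct, and it is exactly the argument the paper intends: the paper omits the proof of Lemma \ref{lf222} with the remark that the arguments are "similar to the ones used for the case $\alpha=\beta$", and your case-by-case reading of the piecewise definition, establishing parts 1--2 first and then composing one explicit step with them for parts 3--4, mirrors precisely the proofs given for Lemmas \ref{l1}--\ref{l3}. No gaps.
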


\begin{lemma}\label{lf223} If $\alpha<\delta<\beta$, then the dynamical system generated by $\psi_{\alpha,\beta,\delta}(r)$ has the following properties:
\begin{itemize}
\item[1.] ${\rm Fix}(\psi_{\alpha,\beta,\delta})=\{r: 0\leq r<\alpha\}\cup\{\alpha:\, if \,
\hat\alpha=\alpha\}\cup\{\beta: \, if \, \hat\beta=\beta\}$.
\item[2.] If $\hat\alpha\not\in\{\alpha,\beta\}$ and $\hat\beta\neq\beta$, then there exists $n\in N$ such that $\psi^n_{\alpha,\beta,\delta}(r)=\alpha$
for all $r\geq\alpha$.
\item[3.] If $r=\alpha$ and $\hat\alpha=\beta$, then $\psi_{\alpha,\beta,\delta}(\alpha)=\beta$.
\end{itemize}
\end{lemma}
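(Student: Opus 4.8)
The plan is to read off everything directly from the piecewise definition of $\psi_{\alpha,\beta,\delta}$, using the ordering $\alpha<\frac{\alpha\beta}{\delta}<\beta$, which is forced by $\alpha<\delta<\beta$ (indeed $\frac{\alpha\beta}{\delta}>\alpha\iff\beta>\delta$ and $\frac{\alpha\beta}{\delta}<\beta\iff\alpha<\delta$). This ordering is what organises the seven branches into a single left-to-right picture, so I would establish it first.

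For part~1 I would solve $\psi_{\alpha,\beta,\delta}(r)=r$ branch by branch. On $\{r<\alpha\}$ the map is the identity, so the whole interval is fixed. At $r=\alpha$ the equation is $\hat\alpha=\alpha$; on $(\alpha,\frac{\alpha\beta}{\delta})$ it reads $\alpha=r$, impossible since $r>\alpha$; at $r=\frac{\alpha\beta}{\delta}$ it reads $\hat\delta=\frac{\alpha\beta}{\delta}$, impossible because $\hat\delta\le\alpha<\frac{\alpha\beta}{\delta}$; on $(\frac{\alpha\beta}{\delta},\beta)$ it reads $\frac{\delta}{\beta}r=r$, impossible as $\delta\neq\beta$; at $r=\beta$ it reads $\hat\beta=\beta$; and on $\{r>\beta\}$ it reads $\delta=r$, impossible since $r>\beta>\delta$. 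Collecting the surviving cases yields exactly the claimed set ${\rm Fix}(\psi_{\alpha,\beta,\delta})$.

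For part~2 the mechanism is \emph{absorption plus contraction}. I would single out the interval $I=(\alpha,\frac{\alpha\beta}{\delta})$, on which $\psi_{\alpha,\beta,\delta}\equiv\alpha$, so any orbit entering $I$ takes the value $\alpha$ at the next step. On $J=(\frac{\alpha\beta}{\delta},\beta)$ the map is the linear contraction $r\mapsto\frac{\delta}{\beta}r$, whose ratio $\frac{\delta}{\beta}$ lies in $(0,1)$ and which has no fixed point in $J$ (since $\delta\neq\beta$); it sends $J$ into $(\alpha,\delta)$. While an orbit remains in $J$ one has $\psi_{\alpha,\beta,\delta}^{k}(r)=\bigl(\frac{\delta}{\beta}\bigr)^{k}r$, and since $\bigl(\frac{\delta}{\beta}\bigr)^{k}\to0$ the orbit must leave $J$ downward after finitely many steps; because $\psi_{\alpha,\beta,\delta}(r)>\frac{\delta}{\beta}\cdot\frac{\alpha\beta}{\delta}=\alpha$ for every $r\in J$, the exit value cannot drop below $\alpha$ and hence lands in $I$ (or on its endpoint $\frac{\alpha\beta}{\delta}$), whence $\alpha$ is reached one step later. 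Finally, points with $r>\beta$ map to $\delta\in(\alpha,\beta)$ in a single step, and $r=\beta$ maps to $\hat\beta\neq\beta$; the hypotheses $\hat\alpha\notin\{\alpha,\beta\}$ and $\hat\beta\neq\beta$ are precisely what prevents the orbit from being pinned at the fixed point $\beta$, so every orbit re-enters $(\alpha,\beta)$ and the contraction argument applies. Part~3 is then immediate: when $\hat\alpha=\beta$, the branch of $\psi_{\alpha,\beta,\delta}$ at $r=\alpha$ gives $\psi_{\alpha,\beta,\delta}(\alpha)=\hat\alpha=\beta$ with no iteration needed.

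I expect the delicate point to be the boundary value $\frac{\alpha\beta}{\delta}$. There the contraction on $J$ can deposit an orbit, and the map jumps to $\hat\delta\le\alpha$; one must check that the orbit still reaches the value $\alpha$ and does not slip strictly below $\alpha$ into the identity region $\{r<\alpha\}$ (where it would freeze). Controlling this endpoint behaviour, together with verifying that the finitely many steps needed are uniform enough to state the conclusion for all $r\ge\alpha$, is the only genuinely non-routine part of the argument; the remaining branches are settled by the direct case checks above.
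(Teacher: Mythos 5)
The paper itself offers no proof of this lemma (the subsection states explicitly that the results are given without proofs, the arguments being analogous to the case $\alpha=\beta$), so your branch-by-branch reading of the piecewise definition is the natural route. Your parts~1 and~3 are complete and correct: the ordering $\alpha<\frac{\alpha\beta}{\delta}<\beta$ forced by $\alpha<\delta<\beta$ is exactly the right organising fact, and your case analysis for ${\rm Fix}(\psi_{\alpha,\beta,\delta})$ uses the constraints $\hat\delta\le\alpha$ and $\delta<\beta$ correctly.

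The gap is in part~2, and it is precisely the one you flag in your closing paragraph without closing. An orbit can land exactly on the value $\frac{\alpha\beta}{\delta}$ --- indeed $r=\frac{\alpha\beta}{\delta}$ is itself a point with $r\ge\alpha$ covered by the claim --- and there the map takes the value $\hat\delta\le\alpha$. If $\hat\delta<\alpha$, then $\hat\delta$ lies in the identity region, so $\psi^n_{\alpha,\beta,\delta}\bigl(\frac{\alpha\beta}{\delta}\bigr)=\hat\delta<\alpha$ for all $n\ge1$: the orbit freezes strictly below $\alpha$ and never attains the value $\alpha$. The same failure occurs at every preimage of $\frac{\alpha\beta}{\delta}$, e.g.\ at $r=\frac{\alpha\beta^2}{\delta^2}$ when $\delta^2>\alpha\beta$ (this point lies in $\bigl(\frac{\alpha\beta}{\delta},\beta\bigr)$ and the contraction sends it exactly onto $\frac{\alpha\beta}{\delta}$), and at every $r>\beta$ when $\delta^2=\alpha\beta$ (since then $\psi_{\alpha,\beta,\delta}(r)=\delta=\frac{\alpha\beta}{\delta}$). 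So part~2 cannot be established as stated: one needs either the additional hypothesis $\hat\delta=\alpha$ or the exclusion of $\frac{\alpha\beta}{\delta}$ together with all of its preimages. Your absorption-plus-contraction mechanism is correct on the complement of that exceptional set, but declaring the endpoint control to be ``the only genuinely non-routine part'' is not a proof of it --- and here it is not merely non-routine, it is false without an extra assumption. (In the intended $p$-adic application the quantities $\hat\alpha,\hat\delta,\hat\beta$ are point-dependent values $|f(x)-x_0|_p$ rather than constants, which is presumably why the authors state the lemma loosely; but as a claim about the fixed real map $\psi_{\alpha,\beta,\delta}$ it needs the caveat, and so does any proof of it.)
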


\begin{lemma}\label{lf224} If $\delta=\beta$, then the dynamical system generated by $\eta_{\alpha,\beta}(r)$ has the following properties:
\begin{itemize}
\item[1.] ${\rm Fix}(\eta_{\alpha,\beta})=\{r: 0\leq r<\alpha\}\cup\{\alpha:\, if \,
\bar\alpha=\alpha\}\cup\{r: \alpha<r<\beta\}\cup\{\beta:\, if \,
\bar\beta=\beta\}$.
\item[2.] If $r>\beta$, then $\eta_{\alpha,\beta}(r)=\beta.$
\item[3.] Let $r=\alpha$.
\begin{itemize}
\item[3.1)] If $\bar\alpha\neq\alpha$ and $\bar\alpha<\beta$, then
$\eta^n_{\alpha,\beta}(\alpha)=\bar\alpha$, for all $n\geq 1$.
\item[3.2)] If $\bar\alpha=\beta$, then $\eta_{\alpha,\beta}(\alpha)=\beta.$
\item[3.3)] If $\bar\alpha>\beta$, then $\eta^2_{\alpha,\beta}(\alpha)=\beta.$
\end{itemize}
\item[4.] If $r=\beta$ and $\bar\beta>\beta$, then $\eta^2_{\alpha,\beta}(\beta)=\beta.$
\end{itemize}
\end{lemma}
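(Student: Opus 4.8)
The plan is to read off every claim directly from the piecewise definition of $\eta_{\alpha,\beta}$, exactly as was done for the functions $\varphi_{\alpha,\delta}$, $\phi_{\alpha,\delta}$, $\psi_{\alpha}$ in Lemmas \ref{l1}--\ref{l3}. The map $\eta_{\alpha,\beta}$ is the identity on the two open regions $r<\alpha$ and $\alpha<r<\beta$, takes the single value $\beta$ on $r>\beta$, and assigns the prescribed values $\bar\alpha$ and $\bar\beta$ at the two boundary points $r=\alpha$ and $r=\beta$. Because the recipe is this explicit, each part reduces to a finite case check together with at most one or two further applications of $\eta_{\alpha,\beta}$; no limiting or monotonicity argument (such as the derivative estimate $\phi'_{\alpha,\delta}(r)>2$ used in Lemma \ref{l2}) is needed here, since on every interval the map is either constant or the identity.

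For part 1 I would solve $\eta_{\alpha,\beta}(r)=r$ piece by piece. On $\{0\le r<\alpha\}$ and on $\{\alpha<r<\beta\}$ the map is the identity, so every such $r$ is fixed; on $\{r>\beta\}$ the value is $\beta\neq r$, so no such $r$ is fixed. At the two boundary points the fixed-point condition becomes $\bar\alpha=\alpha$ and $\bar\beta=\beta$ respectively. Collecting these gives exactly the stated description of $\mathrm{Fix}(\eta_{\alpha,\beta})$. Part 2 is then immediate from the last line of the definition.

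For parts 3 and 4 the idea is to apply $\eta_{\alpha,\beta}$ once to reach the value $\bar\alpha$ (resp. $\bar\beta$) and then decide, from parts 1 and 2, whether that value is already fixed or needs one more step. In part 3, $\eta_{\alpha,\beta}(\alpha)=\bar\alpha$; when $\bar\alpha\neq\alpha$ and $\bar\alpha<\beta$ the point $\bar\alpha$ lies in one of the two identity regions and is therefore fixed, giving $\eta^n_{\alpha,\beta}(\alpha)=\bar\alpha$ for all $n\ge 1$ (case 3.1); when $\bar\alpha=\beta$ the first application already lands on $\beta$ (case 3.2); and when $\bar\alpha>\beta$ a second application sends $\bar\alpha$ to $\beta$ by part 2, yielding $\eta^2_{\alpha,\beta}(\alpha)=\beta$ (case 3.3). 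Part 4 is the same two-step argument at $r=\beta$: since $\bar\beta\ge\beta$ by hypothesis, the only nontrivial case is $\bar\beta>\beta$, where $\eta_{\alpha,\beta}(\beta)=\bar\beta>\beta$ and then part 2 gives $\eta_{\alpha,\beta}(\bar\beta)=\beta$, so $\eta^2_{\alpha,\beta}(\beta)=\beta$.

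I do not anticipate a genuine obstacle: the only thing to watch is the bookkeeping at the boundary values, in particular respecting the standing constraints $\bar\alpha>0$ and $\bar\beta\ge\beta$ imposed in the definition of $\eta_{\alpha,\beta}$. The constraint $\bar\beta\ge\beta$ is what guarantees that $\bar\beta$ never falls below $\beta$, so that part 4 needs to treat only $\bar\beta>\beta$, the equality case being absorbed into part 1. The mild care required is simply to enumerate the sub-cases for $\bar\alpha$ exhaustively, namely $0<\bar\alpha<\alpha$, $\bar\alpha=\alpha$, $\alpha<\bar\alpha<\beta$, $\bar\alpha=\beta$, and $\bar\alpha>\beta$, and to match each with the appropriate clause of the statement.
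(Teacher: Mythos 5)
Your proof is correct and matches the approach the paper intends: the paper omits the proof of Lemma \ref{lf224} entirely, stating only that the arguments are "similar to the ones used for the case $\alpha=\beta$" (i.e., the direct piecewise case analysis of Lemmas \ref{l1}--\ref{l3}), and your case-by-case verification from the definition of $\eta_{\alpha,\beta}$, using that the map is the identity on $[0,\alpha)$ and $(\alpha,\beta)$ and constant equal to $\beta$ on $(\beta,\infty)$, is exactly that argument carried out in full.
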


\begin{lemma}\label{lf225} If $\delta>\beta$, then the dynamical system generated by $\zeta_{\alpha,\beta,\delta}(r)$ has the following properties:
\begin{itemize}
\item[1.] ${\rm Fix}(\zeta_{\alpha,\beta,\delta})=\{r: 0\leq r<{{\alpha\beta}\over\delta}\}\cup\{{{\alpha\beta}\over\delta}:\, if \,
\tilde\delta={{\alpha\beta}\over\delta}\}\cup\{\delta\}$.
\item[2.] If $r>{{\alpha\beta}\over\delta}$, then
$$\lim_{n\to\infty}\zeta^n_{\alpha,\beta,\delta}(r)=\delta.$$
\item[3.] If $r={{\alpha\beta}\over\delta}$ and $\tilde\delta<{{\alpha\beta}\over\delta}$, then
$$\zeta^n_{\alpha,\beta,\delta}(r)=\tilde\delta$$ for any $n\geq 1$.
\end{itemize}
\end{lemma}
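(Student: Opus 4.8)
The plan is to follow the same strategy as in the proof of Lemma \ref{l2}, since $\zeta_{\alpha,\beta,\delta}$ is the $\alpha<\beta$ analogue of $\phi_{\alpha,\delta}$: here the relevant breakpoints satisfy ${{\alpha\beta}\over\delta}<\alpha<\beta<\delta$, and the map is the identity below ${{\alpha\beta}\over\delta}$, a quadratic on $({{\alpha\beta}\over\delta},\alpha)$, a linear stretch on $(\alpha,\beta)$, and the constant $\delta$ above $\beta$. Part~1 will be a direct analysis of $\zeta_{\alpha,\beta,\delta}(r)=r$ piece by piece, Part~3 will be immediate, and the substance lies in Part~2.

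For Part~1 I would solve $\zeta_{\alpha,\beta,\delta}(r)=r$ on each piece. On $[0,{{\alpha\beta}\over\delta})$ the map is the identity, so every such $r$ is fixed. At $r={{\alpha\beta}\over\delta}$ the value is $\tilde\delta$, giving a fixed point exactly when $\tilde\delta={{\alpha\beta}\over\delta}$. On $({{\alpha\beta}\over\delta},\alpha)$ the equation ${{\delta r^2}\over{\alpha\beta}}=r$ forces $r={{\alpha\beta}\over\delta}$, which is not interior, so there is no fixed point there; on $(\alpha,\beta)$ the equation ${\delta\over\beta}r=r$ has no solution because $\delta\ne\beta$. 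The boundary values cannot be fixed either, since $\tilde\alpha\ge{{\alpha\delta}\over\beta}>\alpha$ and $\tilde\beta\ge\delta>\beta$. Finally, on $(\beta,+\infty)$ the map is the constant $\delta$, whose only fixed point is $r=\delta$ (which lies in this range as $\delta>\beta$). Collecting these gives exactly the set in Part~1.

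For Part~2 I would split $\{r>{{\alpha\beta}\over\delta}\}$ into the natural regions and push every orbit toward the constant region. If $r>\beta$ then $\zeta_{\alpha,\beta,\delta}(r)=\delta$ at once, and $\delta$ is fixed; if $r=\beta$ then $\zeta_{\alpha,\beta,\delta}(\beta)=\tilde\beta\ge\delta>\beta$, so the next step gives $\delta$. On $(\alpha,\beta)$ the map is the linear stretch ${\delta\over\beta}r$ with factor ${\delta\over\beta}>1$, so the orbit is strictly increasing and must leave $(\alpha,\beta)$ after finitely many steps, landing in $[\beta,+\infty)$ and hence reaching $\delta$ in at most one further step. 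The value $r=\alpha$ is sent to $\tilde\alpha\ge{{\alpha\delta}\over\beta}>\alpha$, reducing to the previous cases. The main obstacle, exactly as in part~B of Lemma \ref{l2}, is the quadratic region $({{\alpha\beta}\over\delta},\alpha)$: there $\zeta_{\alpha,\beta,\delta}(r)={{\delta r^2}\over{\alpha\beta}}$ has derivative ${{2\delta r}\over{\alpha\beta}}>2$ (since $r>{{\alpha\beta}\over\delta}$) and satisfies $\zeta_{\alpha,\beta,\delta}(r)>r$, so the lower endpoint ${{\alpha\beta}\over\delta}$ is repelling and the orbit cannot stay in the interval; one concludes there is $n_0\in\N$ with $\zeta^{n_0}_{\alpha,\beta,\delta}(r)\ge\alpha$, after which the previous cases give $\lim_{n\to\infty}\zeta^n_{\alpha,\beta,\delta}(r)=\delta$. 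Turning the derivative bound into a finite escape time is the one step requiring care. Part~3 is then immediate: if $r={{\alpha\beta}\over\delta}$ and $\tilde\delta<{{\alpha\beta}\over\delta}$, then $\zeta_{\alpha,\beta,\delta}(r)=\tilde\delta$ lies in the identity region, where it is fixed, so $\zeta^n_{\alpha,\beta,\delta}(r)=\tilde\delta$ for all $n\ge1$.
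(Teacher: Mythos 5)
Your proof is correct and is essentially the argument the paper intends: the paper states Lemma~\ref{lf225} without proof, remarking only that the case $\alpha<\beta$ is handled ``similarly'' to $\alpha=\beta$, and your piecewise analysis of $\zeta_{\alpha,\beta,\delta}(r)=r$ together with the escape argument from the quadratic region $({\alpha\beta\over\delta},\alpha)$ is precisely the analogue of the paper's proof of Lemma~\ref{l2}. (Only a trivial slip: an orbit landing exactly at $r=\beta$ needs two further steps, via $\tilde\beta$, to reach $\delta$, not one — but you treat that case correctly elsewhere, so the conclusion stands.)
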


If $\alpha<\beta$, then we note that $\mathcal P$ has the
following form $\mathcal P=\mathcal P_{\alpha}\cup\mathcal
P_{\beta}$, where $$\mathcal P_{\alpha}=\{x\in \C_p: \exists n\in
\N\cup\{0\}, f^n(x)=x_1\} \ \ {\rm and} \ \ \mathcal
P_{\beta}=\{x\in \C_p: \exists n\in \N\cup\{0\}, f^n(x)=x_2\}.$$
By definitions of
$\varphi_{\alpha,\beta,\delta}$,  $\phi_{\alpha,\beta}$,
$\psi_{\alpha,\beta,\delta}$,  $\eta_{\alpha,\beta}$ and
$\zeta_{\alpha,\beta,\delta}$ and Lemma \ref{lf221}
- Lemma \ref{lf225} we have the following

\begin{thm}\label{lpp} If $\alpha<\beta$, then
\begin{itemize}
\item[1.] If $\delta<\alpha$, then $$\mathcal P_{\alpha}\subset\bigcup_{\alpha\leq r\leq\beta}S_{r}(x_0) \ \ {\rm and} \ \ \mathcal P_{\beta}\subset S_{\beta}(x_0).$$
\item[2.] If $\alpha\leq\delta<\beta$, then $$\mathcal P_{\alpha}\subset\bigcup_{r\geq\alpha}S_{r}(x_0) \ \ {\rm and} \ \ \mathcal P_{\beta}\subset S_{\beta}(x_0).$$
\item[3.] If $\delta=\beta$, then $$\mathcal P_{\alpha}\subset S_{\alpha}(x_0) \ \ {\rm and} \ \ \mathcal P_{\beta}\subset\bigcup_{r\geq\beta} S_{r}(x_0).$$
\item[4.] If $\delta>\beta$, then there are two sequences $\{r_k\}\subset({{\alpha\beta}\over\delta},\alpha]$ and $\{\r_k\}\subset({{\alpha\beta}\over\delta},\beta]$
such that
$$\mathcal P_{\alpha}\subset\bigcup^{\infty}_{k=1}S_{r_k}(x_0) \ \
{\rm and} \ \ \mathcal P_{\beta}\subset
\bigcup^{\infty}_{k=1}S_{\r_k}(x_0).$$
\end{itemize}
\end{thm}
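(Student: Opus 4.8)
The plan is to handle the two preimage sets $\mathcal P_\alpha$ and $\mathcal P_\beta$ exactly as in the case $\alpha=\beta$ (Theorem \ref{tpk}): reduce the geometry to the five real one--dimensional systems of Lemma \ref{lf22} and then invert the selected map. First I would record the base step: since $|x_0-x_1|_p=\alpha$ and $|x_0-x_2|_p=\beta$ we have $x_1\in S_\alpha(x_0)$ and $x_2\in S_\beta(x_0)$, so a trajectory can reach $x_1$ only after an iterate of radius $\alpha$ and can reach $x_2$ only after an iterate of radius $\beta$. The key observation is the one--step radius relation coming from (\ref{f2}): if $f(x)=y$ with $y\in S_\rho(x_0)$, then $r=|x-x_0|_p$ must satisfy $g(r)=\rho$, where $g$ is whichever of $\varphi_{\alpha,\beta,\delta},\ \phi_{\alpha,\beta},\ \psi_{\alpha,\beta,\delta},\ \eta_{\alpha,\beta},\ \zeta_{\alpha,\beta,\delta}$ is selected by the position of $\delta$ relative to $\alpha,\beta$. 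Because preimages themselves lie in $\mathcal P$, I would not apply $g^n$ to a point of $\mathcal P$ directly; instead I would iterate this single--step relation \emph{backwards}, exactly as in Theorem \ref{tpk}, to obtain
$$
\mathcal P_\alpha\subset\bigcup\{S_r(x_0):\ \exists\,n\in\N,\ g^n(r)=\alpha\},\qquad
\mathcal P_\beta\subset\bigcup\{S_r(x_0):\ \exists\,n\in\N,\ g^n(r)=\beta\}.
$$
Thus each of the four assertions reduces to describing the backward orbits of $\alpha$ and of $\beta$ under the appropriate $g$.

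For the first three cases these backward orbits are finite, and I would simply invert the branches of the chosen map and read off the spheres from the corresponding lemma. When $\delta<\alpha$ one uses $\varphi_{\alpha,\beta,\delta}$ and Lemma \ref{lf221}; when $\alpha=\delta$ or $\alpha<\delta<\beta$ one uses $\phi_{\alpha,\beta}$ (Lemma \ref{lf222}) or $\psi_{\alpha,\beta,\delta}$ (Lemma \ref{lf223}); when $\delta=\beta$ one uses $\eta_{\alpha,\beta}$ (Lemma \ref{lf224}). In each of these cases the branches of $g$ that feed into $\alpha$ or into $\beta$ are either constant or the identity, so inverting them produces no new radii beyond the stated ones: a whole constant plateau inverts to an interval of radii (accounting for the unions $\bigcup_{\alpha\le r\le\beta}$ and $\bigcup_{r\ge\alpha}$), while an identity or point--determined branch inverts to a single sphere. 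Collecting these gives statements 1--3.

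The substantial case is $\delta>\beta$, governed by $\zeta_{\alpha,\beta,\delta}$. Here, by Lemma \ref{lf225}, the two branches $\zeta_{\alpha,\beta,\delta}(r)=\tfrac{\delta r^2}{\alpha\beta}$ on $(\tfrac{\alpha\beta}{\delta},\alpha)$ and $\zeta_{\alpha,\beta,\delta}(r)=\tfrac{\delta}{\beta}r$ on $(\alpha,\beta)$ are strictly expanding, and their common lower endpoint $\tfrac{\alpha\beta}{\delta}$ is repelling. Consequently the inverse branches are contractions toward $\tfrac{\alpha\beta}{\delta}$, and iterating them from $\alpha$ and from $\beta$ yields strictly decreasing sequences $\{r_k\}\subset(\tfrac{\alpha\beta}{\delta},\alpha]$ and $\{\rho_k\}\subset(\tfrac{\alpha\beta}{\delta},\beta]$ determined by
$$
\zeta^k_{\alpha,\beta,\delta}(r_k)=\alpha,\qquad \zeta^k_{\alpha,\beta,\delta}(\rho_k)=\beta .
$$
Exactly as in the root extraction of Theorem \ref{tpk}, I would show each equation has a unique admissible solution by composing the quadratic and linear branches and taking the appropriate $2^k$--th or geometric root, then verify that the solutions remain inside the half--open intervals. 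That these spheres are actually met follows, as in Theorem \ref{tpk}, from solvability of the quadratic $f(x)=y$: since $a\in S_\delta(x_0)$ and $\delta>\beta>\alpha$, the leading coefficient $a-y$ never vanishes along the relevant orbits, so preimages exist at every level. This gives statement 4 with $\mathcal P_\alpha\subset\bigcup_{k\ge1}S_{r_k}(x_0)$ and $\mathcal P_\beta\subset\bigcup_{k\ge1}S_{\rho_k}(x_0)$.

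I expect the main obstacle to be precisely this last case: one must keep track of the two \emph{interleaving} expanding branches of $\zeta_{\alpha,\beta,\delta}$ (the backward orbit of $\beta$ passes from the linear branch into the quadratic branch once it descends below $\alpha$), solve the iterated branch equations in closed form, and confirm that $r_k,\rho_k$ never leave $(\tfrac{\alpha\beta}{\delta},\alpha]$ and $(\tfrac{\alpha\beta}{\delta},\beta]$. By contrast, cases 1--3 are immediate once the correct real map is identified, since there the backward descent terminates after finitely many radii.
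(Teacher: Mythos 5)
The paper offers no proof of this theorem: the subsection is prefaced by the statement that the results are given ``without proofs,'' and the theorem is simply asserted to follow from the definitions of the five radius maps and Lemmas \ref{lf221}--\ref{lf225}. Your strategy---reduce to the backward orbits of $\alpha$ and $\beta$ under the selected real map, as in Theorem \ref{tpk}---is exactly the intended route. Nevertheless there is a genuine gap, and it sits in the one sub-case that actually occurs. The reduction ``$r$ must satisfy $g(r)=\rho$'' is only meaningful away from the exceptional radii $r=\alpha,\beta$, where the image radius is the point-dependent quantity $\alpha^*(x)$ (resp.\ $\beta^*(x),\bar\alpha(x)$), not a single number. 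Your claim that ``the branches of $g$ that feed into $\alpha$ or into $\beta$ are either constant or the identity, so inverting them produces no new radii beyond the stated ones'' overlooks that the paper's own lemmas allow these point-branches to feed $S_\alpha(x_0)$ into $S_\beta(x_0)$ (e.g.\ $\alpha^*=\beta$ in Lemma \ref{lf221}(4.2), $\bar\alpha=\beta$ in Lemma \ref{lf224}(3.2)). Hence in cases 1--3 your bookkeeping yields at best $\mathcal P_\beta\subset S_\alpha(x_0)\cup\bigcup_{r\ge\beta}S_r(x_0)$, and an extra argument is needed to exclude preimages of $x_2$ on $S_\alpha(x_0)$. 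In fact no such argument can exist: writing $A=x_0-x_1$, $B=x_0-x_2$ and using the identity $A+B=\frac{2a+d}{3}$ (from $x_1+x_2=-d$, $x_0=\frac{a-d}{3}$), the equation $f(x)=x_2$ becomes, in the variable $u=x-x_0$,
$$(A+2B)\,u^2+B(2A+B)\,u+AB^2=0,$$
whose coefficient norms for odd $p$ and $\alpha<\beta$ are $\beta$, $\beta^2$, $\alpha\beta^2$; the Newton polygon gives one root of norm $\beta$ and one of norm $\alpha$. So a preimage of $x_2$ genuinely lies on $S_\alpha(x_0)$ and the inclusion $\mathcal P_\beta\subset\bigcup_{r\ge\beta}S_r(x_0)$ of part 3 fails as stated; the statement itself needs repair before any proof can close.

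A second, structural problem: the same identity gives $\delta=|A+B|_p\le\max(\alpha,\beta)$ (the paper records this in Section 4), so under the standing hypothesis $\alpha<\beta$ one always has $\delta=\beta$. Cases 1, 2 and 4 of the theorem are therefore vacuous, and the part of your proposal you identify as the substantial one---the interleaving expanding branches of $\zeta_{\alpha,\beta,\delta}$ and the closed-form solution of $\zeta^k_{\alpha,\beta,\delta}(\rho_k)=\beta$---concerns an empty parameter set. That analysis is internally coherent and correctly mirrors Theorem \ref{tpk}, but all of the real content of the theorem is concentrated in case 3, which is precisely where the gap above lies.
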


Now we shall apply above-mentioned results for study of the $p$-adic
dynamical system generated by function (\ref{fd}).

For $\alpha<\beta$ denote the following
$$\alpha^*(x)=|f(x)-x_0|_p, \ \ {\rm if} \ \ x\in
S_{\alpha}(x_0)\setminus\{x_1\};$$ $$ \beta^*(x)=|f(x)-x_0|_p, \ \
{\rm if} \ \ x\in S_{\beta}(x_0)\setminus\{x_2\};$$
$$\delta^*(x)=|f(x)-x_0|_p, \ \ {\rm if} \ \ x\in
S_{{\alpha\beta}\over \delta}(x_0).$$

Then using Lemma \ref{lf22} and Lemma \ref{lf221} we obtain the
following

 \begin{thm}\label{tf221} If $\delta<\alpha<\beta$ and $x\in S_r(x_0)\setminus\mathcal P$, then
 the $p$-adic dynamical system generated by function (\ref{fd}) has the following properties:
\begin{itemize}
\item[1.] $SI(x_0)=U_{\alpha}(x_0)$.
\item[2.] If $\alpha<r<\beta$, then $f(x)\in S_{\alpha}(x_0)$.
\item[3.] Let $r>\beta$, then $$f^n(x)\in\left\{\begin{array}{lll}
S_{\alpha\beta\over r}(x_0), \ \ \mbox{for all} \ \ \alpha<r<{{\alpha\beta}\over \delta}\\[2mm]
S_{\delta^*(x)}(x_0), \ \ \mbox{for} \ \ r={{\alpha\beta}\over \delta}\\[2mm]
S_{\delta}(x_0), \ \ \mbox{for all} \ \ r>{{\alpha\beta}\over
\delta},
\end{array}
\right.$$ for any $n\geq 1$.
\item[4.] Let $x\in S_{\alpha}(x_0)\setminus\mathcal P$.
\begin{itemize}
\item[4.1)] If $\alpha^*(x)=\alpha$, then $f(x)\in
S_{\alpha}(x_0)$.
\item[4.2)] If $\alpha<\alpha^*(x)<\beta$, then $f^2(x)\in
S_{\alpha}(x_0)$.
\item[4.3)] If $\alpha^*(x)=\beta$, then $f(x)\in
S_{\beta}(x_0)$.
\item[4.4)] If $\alpha^*(x)>\beta$, then $$f^n(x)\in\left\{\begin{array}{lll}
S_{\alpha\beta\over{\alpha^*(x)}}(x_0), \ \ \mbox{for all} \ \ \alpha<{\alpha^*(x)}<{{\alpha\beta}\over \delta}\\[2mm]
S_{\delta^*(f(x))}(x_0), \ \ \mbox{for} \ \ {\alpha^*(x)}={{\alpha\beta}\over \delta}\\[2mm]
S_{\delta}(x_0), \ \ \mbox{for all} \ \
{\alpha^*(x)}>{{\alpha\beta}\over \delta}
\end{array}
\right.$$ for any $n\geq 2$.
\end{itemize}
\item[5.] Let $x\in S_{\beta}(x_0)\setminus\mathcal P$.
\begin{itemize}
\item[5.1)] If $\beta^*(x)=\alpha$, then $f(x)\in
S_{\alpha}(x_0)$.
\item[5.2)] If $\alpha<\beta^*(x)<\beta$, then $f^2(x)\in
S_{\alpha}(x_0)$.
\item[5.3)] If $\beta^*(x)=\beta$, then $f(x)\in
S_{\beta}(x_0)$.
\item[5.4)] If $\beta^*(x)>\beta$, then $$f^n(x)\in\left\{\begin{array}{lll}
S_{\alpha\beta\over{\beta^*(x)}}(x_0), \ \ \mbox{for all} \ \ \alpha<{\beta^*(x)}<{{\alpha\beta}\over \delta}\\[2mm]
S_{\delta^*(f(x))}(x_0), \ \ \mbox{for} \ \ {\beta^*(x)}={{\alpha\beta}\over \delta}\\[2mm]
S_{\delta}(x_0), \ \ \mbox{for all} \ \
{\beta^*(x)}>{{\alpha\beta}\over \delta}
\end{array}
\right.$$ for any $n\geq 2$.
\end{itemize}
\end{itemize}
\end{thm}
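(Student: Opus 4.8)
The plan is to read off every assertion from the real dynamical system generated by $\varphi_{\alpha,\beta,\delta}$, using the conjugacy furnished by Lemma \ref{lf22}: since $\delta<\alpha<\beta$, for every $x\in S_r(x_0)\setminus\mathcal P$ one has $|f^n(x)-x_0|_p=\varphi_{\alpha,\beta,\delta}^n(r)$. Thus parts 2--5 are translations of the corresponding parts of Lemma \ref{lf221}, once the point-dependent numbers $\alpha^*(x)$, $\beta^*(x)$, $\delta^*(x)$ are identified with the values $\alpha^*$, $\beta^*$, $\delta^*$ that the real map takes at $\alpha$, $\beta$, $\alpha\beta/\delta$. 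The only assertion needing a genuine computation is the Siegel disk (part 1), which I would treat first.

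For part 1 I would evaluate the right-hand side of (\ref{f2}) ultrametrically. For $x\in S_r(x_0)$ with $r<\alpha$: in the numerator $\left|\frac{2a+d}{3}(x-x_0)\right|_p=\delta r<\alpha\beta=|(x_0-x_1)(x_0-x_2)|_p$ (using $\delta<\alpha<\beta$), so by property 3.1) of the norm the numerator equals $\alpha\beta$; in the denominator $r<\alpha$ and $r<\beta$ force $|(x-x_0)+(x_0-x_1)|_p=\alpha$ and $|(x-x_0)+(x_0-x_2)|_p=\beta$. Hence $|f(x)-x_0|_p=r$, so each sphere $S_r(x_0)$ with $r<\alpha$ is invariant and $U_\alpha(x_0)\subset SI(x_0)$. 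The reverse inclusion follows from parts 2--5: for every $r\geq\alpha$ no sphere $S_r(x_0)$ is invariant, whence $SI(x_0)\subset U_\alpha(x_0)$.

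For parts 4 and 5 I would first record the two boundary computations: for $x\in S_\alpha(x_0)\setminus\{x_1\}$ the same evaluation of (\ref{f2}) gives $\alpha^*(x)=|f(x)-x_0|_p=\alpha^2/|(x-x_0)+(x_0-x_1)|_p\geq\alpha$, while for $x\in S_\beta(x_0)\setminus\{x_2\}$ it gives $\beta^*(x)=\alpha\beta/|(x-x_0)+(x_0-x_2)|_p\geq\alpha$; these reproduce the constraints $\alpha^*\geq\alpha$, $\beta^*\geq\alpha$ built into the definition of $\varphi_{\alpha,\beta,\delta}$. With $\alpha^*(x)$ playing the role of $\varphi_{\alpha,\beta,\delta}(\alpha)$, the subcases of part 4 are governed by where $\alpha^*(x)$ falls: $\alpha^*(x)=\alpha$ is the fixed-point case (part 1 of Lemma \ref{lf221}), while $\alpha<\alpha^*(x)<\beta$, $\alpha^*(x)=\beta$, and $\alpha^*(x)>\beta$ invoke 4.1), 4.2), 4.3) of Lemma \ref{lf221} respectively, the last splitting further by comparing $\alpha^*(x)$ with $\alpha\beta/\delta$ exactly as in part 3. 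Part 5 is identical with $\beta^*(x)$ in place of $\alpha^*(x)$ and part 5 of Lemma \ref{lf221}.

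For parts 2 and 3 I would apply Lemma \ref{lf22} with $n=1$ together with parts 2 and 3 of Lemma \ref{lf221}: for $\alpha<r<\beta$ the real map sends $r$ to $\alpha$, so $f(x)\in S_\alpha(x_0)$; for $r>\beta$ the three displayed alternatives correspond to $\beta<r<\alpha\beta/\delta$, $r=\alpha\beta/\delta$, $r>\alpha\beta/\delta$, with $\delta^*(x)=|f(x)-x_0|_p$ on the critical sphere $S_{\alpha\beta/\delta}(x_0)$. The main --- indeed the only --- obstacle is the ultrametric bookkeeping in (\ref{f2}): in each radius regime one must determine which of $\delta r$ and $\alpha\beta$ dominates the numerator and how each denominator factor behaves when its two summands have equal versus unequal norm; the hypothesis $\delta<\alpha<\beta$ is precisely what renders every such comparison unambiguous.
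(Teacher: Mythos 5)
Your proposal is correct and follows essentially the same route as the paper: the authors state this theorem without a written proof, indicating only that it is obtained by combining Lemma \ref{lf22} with Lemma \ref{lf221} (mirroring the proof of Theorem \ref{t1} in the $\alpha=\beta$ case), which is precisely your strategy of reading each part off the real dynamical system $\varphi_{\alpha,\beta,\delta}$ after identifying $\alpha^*(x)$, $\beta^*(x)$, $\delta^*(x)$ with the boundary values of that map. Your explicit ultrametric evaluation of (\ref{f2}) for the Siegel disk and the boundary spheres is exactly the omitted bookkeeping the paper alludes to.
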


By Lemma \ref{lf22} and Lemma \ref{lf222} we get

\begin{thm}\label{tf222} If $\alpha=\delta$ and $x\in S_r(x_0)\setminus\mathcal P$, then
 the $p$-adic dynamical system generated by function (\ref{fd}) has the following properties:
\begin{itemize}
\item[1.] $SI(x_0)=U_{\alpha}(x_0)$.
\item[2.] If $r>\alpha$ and $r\ne\beta$, then
$f(x)\in S_{\alpha}(x_0)$.
\item[3.] Let $x\in S_{\alpha}(x_0)\setminus\mathcal P$.
\begin{itemize}
\item[3.1)] If $\alpha^*(x)=\alpha$, then $f(x)\in S_{\alpha}(x_0)$.
\item[3.2)] If $\alpha^*(x)>\alpha$ and $\alpha^*(x)\ne\beta$, then $f^2(x)\in S_{\alpha}(x_0)$.
\item[3.3)] If $\alpha^*(x)=\beta$, then $f(x)\in S_{\beta}(x_0)$.
\end{itemize}
\item[4.] Let $x\in S_{\beta}(x_0)\setminus\mathcal P$.
\begin{itemize}
\item[4.1)] If $\beta^*(x)<\alpha$, then $f^n(x)\in S_{\beta^*(x)}(x_0)$ for any $n\geq 1$.
\item[4.2)] If $\beta^*(x)=\alpha$, then $f(x)\in S_{\alpha}(x_0)$.
\item[4.3)] If $\beta^*(x)>\alpha$ and $\beta^*(x)\ne\beta$, then $f^2(x)\in S_{\alpha}(x_0)$.
\item[4.4)] If $\beta^*(x)=\beta$, then $f(x)\in S_{\beta}(x_0)$.
\end{itemize}
\end{itemize}
\end{thm}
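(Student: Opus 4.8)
The plan is to reduce every assertion to the one-dimensional real dynamical system generated by $\phi_{\alpha,\beta}$, exactly as was done in the proofs of Theorems \ref{t1}--\ref{t3}. Since here $\alpha=\delta<\beta$ and $x\in S_r(x_0)\setminus\mathcal P$, Lemma \ref{lf22} gives the identity $|f^n(x)-x_0|_p=\phi_{\alpha,\beta}^n(r)$ for all $n\geq 1$, so the orbit of the radius $r=|x-x_0|_p$ under $\phi_{\alpha,\beta}$ dictates which sphere each iterate $f^n(x)$ lands on. The only extra bookkeeping is to match the pointwise quantities to the real parameters: taking $n=1$ in Lemma \ref{lf22} shows that $f$ sends $S_\alpha(x_0)\setminus\{x_1\}$ into $S_{\alpha^*(x)}(x_0)$ and $S_\beta(x_0)\setminus\{x_2\}$ into $S_{\beta^*(x)}(x_0)$, so $\alpha^*(x)$ and $\beta^*(x)$ play exactly the roles of the real values $\alpha'=\phi_{\alpha,\beta}(\alpha)$ and $\beta'=\phi_{\alpha,\beta}(\beta)$. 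Note also that $\mathcal P$ is closed under taking preimages, so $x\notin\mathcal P$ forces $f^n(x)\notin\mathcal P$ for every $n$; hence each iterate avoids $x_1,x_2$ and Lemma \ref{lf22} may be reapplied at every step.

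For part 1 I would argue the two inclusions separately. If $r<\alpha$, then by part 1 of Lemma \ref{lf222} the number $r$ is fixed by $\phi_{\alpha,\beta}$, so Lemma \ref{lf22} gives $|f^n(x)-x_0|_p=\phi_{\alpha,\beta}^n(r)=r$, i.e. $f^n(x)\in S_r(x_0)$ for every $n$; thus each such sphere is invariant and $U_\alpha(x_0)\subset SI(x_0)$. Conversely, parts 2--4 (established below) show that for every $r\geq\alpha$ the sphere $S_r(x_0)$ admits points whose image leaves it --- concretely, by (\ref{f2}) a point $x\in S_\alpha(x_0)$ with $|x-x_1|_p<\alpha$ has $|f(x)-x_0|_p>\alpha$ --- so no sphere of radius $\geq\alpha$ is invariant and $SI(x_0)\subset U_\alpha(x_0)$. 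Combining the inclusions yields $SI(x_0)=U_\alpha(x_0)$.

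Parts 2--4 then follow by reading off the corresponding parts of Lemma \ref{lf222}. Part 2 is immediate from part 2 of that lemma: for $r>\alpha$ with $r\neq\beta$ one has $\phi_{\alpha,\beta}(r)=\alpha$, hence $f(x)\in S_\alpha(x_0)$. For part 3, with $x\in S_\alpha(x_0)\setminus\mathcal P$ we have $f(x)\in S_{\alpha^*(x)}(x_0)$; subcases 3.1 ($\alpha^*(x)=\alpha$) and 3.3 ($\alpha^*(x)=\beta$) are immediate from this identity, while in subcase 3.2 ($\alpha^*(x)>\alpha$, $\alpha^*(x)\neq\beta$) we apply part 2 to the point $f(x)\notin\mathcal P$ to conclude $f^2(x)\in S_\alpha(x_0)$; this is precisely part 3 of Lemma \ref{lf222}. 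Part 4 is handled the same way using $f(x)\in S_{\beta^*(x)}(x_0)$: subcases 4.2 and 4.4 are immediate from this identity, subcase 4.1 ($\beta^*(x)<\alpha$) uses that such a radius is fixed by $\phi_{\alpha,\beta}$ (part 4.1 of the lemma) to obtain $f^n(x)\in S_{\beta^*(x)}(x_0)$ for all $n\geq 1$, and subcase 4.3 ($\beta^*(x)>\alpha$, $\beta^*(x)\neq\beta$) applies part 2 once more to $f(x)$.

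The computations are all routine, and indeed the whole argument runs parallel to the proof of Theorem \ref{tf221}. The only point requiring genuine care is the passage from the single real map $\phi_{\alpha,\beta}$ to the family of spheres: one must verify that the pointwise radii $\alpha^*(x),\beta^*(x)$ are exactly the one-step image radii of the respective spheres (this is the $n=1$ case of Lemma \ref{lf22}) and that the hypothesis $x\notin\mathcal P$ propagates to all iterates so that Lemma \ref{lf22} remains applicable at each stage. Beyond this matching I expect no real obstacle.
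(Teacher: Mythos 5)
Your proposal is correct and follows exactly the route the paper intends: the paper states Theorem \ref{tf222} without proof, deriving it directly from Lemma \ref{lf22} and Lemma \ref{lf222} (the subsection explicitly omits proofs as ``similar to the case $\alpha=\beta$''), and your argument is a faithful filling-in of that derivation, including the correct observations that $\mathcal P$ is closed under preimages and that $\alpha^*(x),\beta^*(x)$ realize the one-step radii $\alpha',\beta'$. No gaps.
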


Using the Lemma \ref{lf22} and Lemma \ref{lf223} we get

\begin{thm}\label{tf223} If $\alpha<\delta<\beta$ and $x\in S_r(x_0)\setminus\mathcal P$, then
 the $p$-adic dynamical system generated by function (\ref{fd}) has the following properties:
\begin{itemize}
\item[1.] $SI(x_0)=U_{\alpha}(x_0)$.
\item[2.] If $r>\alpha$ and $r\ne\beta$, then there exists $n\in N$ such that $f^n(x)\in
S_{\alpha}(x_0)$.
\item[3.] Let $r=\alpha$ ($r=\beta$).
\begin{itemize}
\item[3.1)] If $\alpha^*(x)\ne\beta$ ($\beta^*(x)\ne\beta$), then there exists $n\in N$ such that $f^n(x)\in
S_{\alpha}(x_0)$.
\item[3.2)] If $\alpha^*(x)=\beta$ ($\beta^*(x)=\beta$), then $f(x)\in
S_{\beta}(x_0)$.
\end{itemize}
\end{itemize}
\end{thm}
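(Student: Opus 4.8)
The plan is to reduce the whole statement to the real one-dimensional dynamical system generated by $\psi_{\alpha,\beta,\delta}$, exactly as Theorem \ref{t2} was deduced from Lemma \ref{lf2} and Lemma \ref{l2}. The bridge is Lemma \ref{lf22}, which gives $|f^n(x)-x_0|_p=\psi^n_{\alpha,\beta,\delta}(r)$ for $x\in S_r(x_0)\setminus\mathcal P$ when $\alpha<\delta<\beta$. First I would pin down the three point-dependent constants hidden in $\psi$ using formula (\ref{f2}). For $y\in S_\alpha(x_0)$ the numerator has norm $\alpha\beta$ (since $\delta\alpha<\alpha\beta$) and the denominator is $|(y-x_0)+(x_0-x_1)|_p\cdot\beta$, so $|f(y)-x_0|_p=\alpha^2/|(y-x_0)+(x_0-x_1)|_p\geq\alpha$, i.e. $\alpha^*(y)\geq\alpha$; an identical computation on $S_\beta(x_0)$ yields $\beta^*(y)\geq\delta$; and on $S_{\alpha\beta/\delta}(x_0)$, since $\delta\cdot(\alpha\beta/\delta)=\alpha\beta=|(x_0-x_1)(x_0-x_2)|_p$, the two numerator terms have equal norm and may cancel, whence $\delta^*(y)\leq\alpha$. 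These are precisely the constraints imposed on $\hat\alpha,\hat\beta,\hat\delta$ in the definition of $\psi_{\alpha,\beta,\delta}$, so every transition rule of $\psi$ is actually realized by $f$.

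For part 1 I would argue as in Theorem \ref{t2}.A: if $r<\alpha$ then $\psi_{\alpha,\beta,\delta}(r)=r$, so by Lemma \ref{lf22} the sphere $S_r(x_0)$ is invariant and $U_\alpha(x_0)\subseteq SI(x_0)$; conversely parts 2 and 3 exhibit, for every $r\geq\alpha$, points whose image leaves $S_r(x_0)$, so no sphere of radius $\geq\alpha$ is invariant and $SI(x_0)\subseteq U_\alpha(x_0)$, giving equality.

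Parts 2 and 3 then amount to tracing a $\psi$-orbit and invoking Lemma \ref{lf223}. For $\alpha<r<\alpha\beta/\delta$ one has $\psi(r)=\alpha$, i.e. $f(x)\in S_\alpha(x_0)$ already; for $r>\beta$ one has $\psi(r)=\delta\in(\alpha,\beta)$; and on the middle band $\alpha\beta/\delta<r<\beta$ the rule $\psi(r)=(\delta/\beta)r$ is a genuine contraction by the factor $\delta/\beta<1$ which maps $(\alpha\beta/\delta,\beta)$ into $(\alpha,\delta)$, so its iterates decrease and after finitely many steps fall into $(\alpha,\alpha\beta/\delta)$, from where one further step lands on $S_\alpha(x_0)$; this is exactly the content of Lemma \ref{lf223}(2), providing the finite $n$ of part 2. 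Part 3 is the boundary version: for $x\in S_\alpha(x_0)$ (resp. $x\in S_\beta(x_0)$) one reads off $f(x)\in S_{\alpha^*(x)}(x_0)$ (resp. $S_{\beta^*(x)}(x_0)$), so if $\alpha^*(x)=\beta$ (resp. $\beta^*(x)=\beta$) then $f(x)\in S_\beta(x_0)$ as in Lemma \ref{lf223}(3), while in every other case the new radius is some value $>\alpha$ different from $\beta$ and part 2 applies.

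The step I expect to be the main obstacle is the honest treatment of the exceptional sphere $S_{\alpha\beta/\delta}(x_0)$, where the cancellation noted above makes $\delta^*(x)\leq\alpha$ rather than $=\alpha$. I would handle it by the same two-possibilities induction as in the proof of Theorem \ref{t2}.C: at each visit to $S_\alpha(x_0)$ or $S_{\alpha\beta/\delta}(x_0)$, either the image already has norm $\alpha$ and we stop, or it has a strictly different norm whose subsequent orbit is controlled by the transition rules just described; the only way to fail to reach $S_\alpha(x_0)$ is a full cancellation at $S_{\alpha\beta/\delta}(x_0)$ that sends the point into a fixed sphere of radius $<\alpha$, which I would record as the degenerate subcase, exactly parallel to the subcase $\mu_k<\alpha^2/\delta$ appearing in Theorem \ref{t2}.C.
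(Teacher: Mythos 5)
Your proposal is correct and follows exactly the route the paper intends: Theorem \ref{tf223} is stated without proof (the whole subsection defers to the remark that the arguments mirror the case $\alpha=\beta$), the intended argument being precisely the reduction via Lemma \ref{lf22} to the real system $\psi_{\alpha,\beta,\delta}$ and Lemma \ref{lf223}, and your verification from formula (\ref{f2}) that $\hat\alpha\geq\alpha$, $\hat\beta\geq\delta$, $\hat\delta\leq\alpha$ is the computation the authors omit. One substantive remark: the degenerate subcase you flag at $S_{\alpha\beta/\delta}(x_0)$ --- where full cancellation in the numerator gives $\hat\delta(x)<\alpha$ and the orbit drops into an invariant sphere of radius $<\alpha$, never reaching $S_{\alpha}(x_0)$ --- is a genuine exception that neither part 2 of the theorem nor Lemma \ref{lf223}.2 records, so your more cautious formulation (parallel to the two-possibility dichotomy of Theorem \ref{t2}.C) is not a weakness of your proof but a needed correction to the statement as printed.
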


Using the Lemma \ref{lf22} and Lemma \ref{lf224} we get
 \begin{thm}\label{tf224} If $\delta=\beta$ and $x\in S_r(x_0)\setminus\mathcal P$, then
 the $p$-adic dynamical system generated by function (\ref{fd}) has the following properties:
\begin{itemize}
\item[1.]
\begin{itemize}
\item[1.1)] $SI(x_0)=U_{\alpha}(x_0)$.
\item[1.2)] If $\alpha<r<\beta$, then $S_r(x_0)$ is an invariant for $f$.
\end{itemize}
\item[2.] Let $r=\alpha$ and $x\in S_{\alpha}(x_0)\setminus\mathcal P$.
\begin{itemize}
\item[2.1)] If $\alpha^*(x)=\alpha$, then $f(x)\in S_{\alpha}(x_0)$.
\item[2.2)] If $\alpha^*(x)<\beta$ and $\alpha^*(x)\ne\alpha$, then $f^n(x)\in S_{\alpha^*(x)}(x_0)$ for any $n\geq 1$.
\item[2.3)] If $\alpha^*(x)=\beta$, then $f(x)\in S_{\beta}(x_0)$.
\item[2.4)] If $\alpha^*(x)>\beta$, then $f^2(x)\in S_{\beta}(x_0)$.
\end{itemize}
\item[3.] Let $r=\beta$ and $x\in S_{\beta}(x_0)\setminus\mathcal P$.
\begin{itemize}
\item[3.1)] If $\beta^*(x)=\beta$, then $f(x)\in S_{\beta}(x_0)$.
\item[3.2)] If $\beta^*(x)>\beta$, then $f^2(x)\in S_{\beta}(x_0)$.
\end{itemize}
\item[4.] If $r>\beta$, then $f(S_r(x_0))\subset S_{\beta}(x_0)$.
\end{itemize}
\end{thm}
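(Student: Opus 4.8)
The plan is to transcribe the $p$-adic statement into the real one–dimensional dynamics of $\eta_{\alpha,\beta}$, exactly as the earlier theorems of this subsection are handled. The bridge is Lemma \ref{lf22}: for $x\in S_r(x_0)\setminus\mathcal P$ with $\delta=\beta$ one has $|f^n(x)-x_0|_p=\eta^n_{\alpha,\beta}(r)$, so every assertion about the radius of $f^n(x)$ reduces to the corresponding iterate of $\eta_{\alpha,\beta}$, whose orbit structure is recorded in Lemma \ref{lf224}. The one conceptual point to keep in mind is that the auxiliary numbers $\bar\alpha=\eta_{\alpha,\beta}(\alpha)$ and $\bar\beta=\eta_{\alpha,\beta}(\beta)$ are not fixed constants but depend on the chosen point: concretely $\bar\alpha=\alpha^*(x)=|f(x)-x_0|_p$ when $x\in S_\alpha(x_0)$ and $\bar\beta=\beta^*(x)=|f(x)-x_0|_p$ when $x\in S_\beta(x_0)$. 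This is precisely why parts 2 and 3 are split into subcases according to the size of $\alpha^*(x)$ and $\beta^*(x)$. I also record that $x\notin\mathcal P$ forces $f(x)\notin\mathcal P$, so the correspondence may be applied along the whole forward orbit and the quantities $\alpha^*(f^k(x))$ remain meaningful.

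For parts 2, 3, 4 I would argue by direct substitution. Part 4 and the escape transitions are immediate from part 2 of Lemma \ref{lf224} ($\eta_{\alpha,\beta}(r)=\beta$ for $r>\beta$): if $r>\beta$ then $|f(x)-x_0|_p=\eta_{\alpha,\beta}(r)=\beta$, giving $f(S_r(x_0))\subset S_\beta(x_0)$. For part 2, with $r=\alpha$, the four cases are read off from parts 3.1--3.3 of Lemma \ref{lf224} with $\bar\alpha=\alpha^*(x)$: the fixed-point case $\alpha^*(x)=\alpha$; the case $\alpha^*(x)<\beta$, $\alpha^*(x)\neq\alpha$, where $\alpha^*(x)$ is itself a fixed point of $\eta_{\alpha,\beta}$ so the orbit freezes on $S_{\alpha^*(x)}(x_0)$; the threshold $\alpha^*(x)=\beta$; and $\alpha^*(x)>\beta$, where one further application of part 2 of Lemma \ref{lf224} sends the orbit to $S_\beta(x_0)$. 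Part 3, with $r=\beta$, is the same reading of part 4 of Lemma \ref{lf224} with $\bar\beta=\beta^*(x)$; here I would first check, directly from (\ref{f2}) and the strong triangle inequality, that $\beta^*(x)\geq\beta$ always (the numerator has norm $\beta^2$ since $\delta=\beta$ dominates $\alpha\beta$, one denominator factor equals $\beta$, and the other is $\leq\beta$), so only the subcases $\beta^*(x)=\beta$ and $\beta^*(x)>\beta$ can occur.

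For part 1 I would follow Theorems \ref{t1} and \ref{t3}. For 1.2, any $r$ with $\alpha<r<\beta$ is a fixed point of $\eta_{\alpha,\beta}$ by part 1 of Lemma \ref{lf224}, and by part 3 of Theorem \ref{lpp} the sphere $S_r(x_0)$ meets neither $\mathcal P_\alpha$ nor $\mathcal P_\beta$; hence Lemma \ref{lf22} gives $|f^n(x)-x_0|_p=r$ for every $x\in S_r(x_0)$ and all $n$, i.e. $S_r(x_0)$ is invariant. For 1.1, each sphere $S_\rho(x_0)$ with $\rho<\alpha$ is likewise a fixed point of $\eta_{\alpha,\beta}$, so $U_\alpha(x_0)$ is a Siegel disk; maximality follows from the failure of $S_\alpha(x_0)$ to be invariant, which I would obtain from (\ref{f2}) by letting $x\to x_1$ inside $S_\alpha(x_0)$: the factor $|x-x_1|_p$ in the denominator tends to $0$ while the numerator stays bounded away from $0$, forcing $\alpha^*(x)>\alpha$ and hence $f(x)\notin S_\alpha(x_0)$. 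Thus $SI(x_0)=U_\alpha(x_0)$.

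Since the whole argument is a case-by-case transcription, I do not expect a genuine obstacle. The only places demanding care are the bookkeeping that ties the point-dependent values $\alpha^*(x),\beta^*(x)$ to the symbols $\bar\alpha,\bar\beta$ of Lemma \ref{lf224}, and the two elementary non-Archimedean estimates: the lower bound $\beta^*(x)\geq\beta$ needed for part 3, and the blow-up near $x_1$ needed for maximality in part 1. Checking that these estimates correctly use the hypotheses $\alpha<\beta$ and $\delta=\beta$ is the step most prone to index or inequality slips.
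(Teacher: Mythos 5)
Your proposal is correct and follows exactly the route the paper intends: the paper states Theorem \ref{tf224} without proof, citing only Lemma \ref{lf22} and Lemma \ref{lf224}, and your argument is precisely that transcription, with the additional details (the identity $\tfrac{2a+d}{3}=(x_0-x_1)+(x_0-x_2)$ behind the bound $\beta^*(x)\geq\beta$, the non-invariance of $S_\alpha(x_0)$ near $x_1$, and the use of Theorem \ref{lpp} to keep $\mathcal P$ off the intermediate spheres) all checking out. No gaps.
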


By Lemma \ref{lf22} and Lemma \ref{lf225} we get

\begin{thm}\label{tf225} If $\delta>\beta$  and $x\in S_r(x_0)\setminus\mathcal P$, then
 the $p$-adic dynamical system generated by function (\ref{fd}) has the following properties:
\begin{itemize}
\item[1.]
\begin{itemize}
\item[1.1)] $SI(x_0)=U_{{{\alpha\beta}\over \delta}}(x_0)$.
\item[1.2)] The sphere $S_{\delta}(x_0)$ is invariant for $f$.
\end{itemize}
\item[2.] If $r>{{\alpha\beta}\over \delta}$, then
$$\lim_{n\to\infty}f^n(x)\in S_{\delta}(x_0).$$
\item[3.] If $r={{\alpha\beta}\over \delta}$, then one of the
following two possibilities holds:
\begin{itemize}
\item[3.1)] There exists $k\in N$ and
$\mu_k<{{\alpha\beta}\over\delta}$ such that $$f^m(x)\in
S_{\mu_k}(x_0)$$ for any $m\geq k$ and $f^m(x)\in
S_{{\alpha\beta}\over\delta}(x_0)$ if $m\leq k-1$.
\item[3.2)] The trajectory $\{f^k(x), k\geq 1\}$ is a subset of
$S_{{\alpha\beta}\over\delta}(x_0)$.
\end{itemize}
\end{itemize}
\end{thm}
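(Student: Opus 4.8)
The plan is to reduce all four assertions to the behavior of the real one-dimensional map $\zeta_{\alpha,\beta,\delta}$, exactly as Theorem \ref{t2} was deduced from Lemma \ref{l2} in the case $\alpha=\beta$. By Lemma \ref{lf22}, for every $x\in S_r(x_0)\setminus\mathcal P$ one has $|f^n(x)-x_0|_p=\zeta^n_{\alpha,\beta,\delta}(r)$, so each statement about the radius of the iterate $f^n(x)$ translates verbatim into a statement about the orbit $\zeta^n_{\alpha,\beta,\delta}(r)$, whose dynamics are collected in Lemma \ref{lf225}. Throughout I would use the elementary inequalities $\frac{\alpha\beta}{\delta}<\alpha$, $\frac{\alpha\beta}{\delta}<\beta$ and $\frac{\alpha\beta}{\delta}<\delta$, all of which follow from $\alpha<\beta<\delta$.

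For part 1, part 1 of Lemma \ref{lf225} says that every $r<\frac{\alpha\beta}{\delta}$ is a fixed point of $\zeta_{\alpha,\beta,\delta}$; hence for $x\in S_r(x_0)$ with $r<\frac{\alpha\beta}{\delta}$ the radius is frozen, $|f^n(x)-x_0|_p=r$, so each such sphere is invariant and $U_{\frac{\alpha\beta}{\delta}}(x_0)\subset SI(x_0)$. Parts 2 and 3 of Lemma \ref{lf225} show that no sphere of radius $\geq\frac{\alpha\beta}{\delta}$, except $S_\delta(x_0)$, is invariant (a point of $S_{\frac{\alpha\beta}{\delta}}(x_0)$ may be pushed to a strictly smaller sphere, which is the case $\tilde\delta<\frac{\alpha\beta}{\delta}$), so the maximal Siegel disk cannot reach beyond radius $\frac{\alpha\beta}{\delta}$; this yields 1.1. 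Since $\delta\in\mathrm{Fix}(\zeta_{\alpha,\beta,\delta})$ and $\frac{\alpha\beta}{\delta}<\delta$, any $x\in S_\delta(x_0)$ satisfies $|f^n(x)-x_0|_p=\delta$, which is the invariance asserted in 1.2. Part 2 is then immediate from part 2 of Lemma \ref{lf225}: for $r>\frac{\alpha\beta}{\delta}$ the orbit $\zeta^n_{\alpha,\beta,\delta}(r)$ increases and reaches the value $\delta$ after finitely many steps, so $f^n(x)\in S_\delta(x_0)$ for all large $n$ and all limit points of the trajectory lie in $S_\delta(x_0)$.

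The substantive part is 3, the boundary sphere $r=\frac{\alpha\beta}{\delta}$, where I would repeat the scheme of part C of Theorem \ref{t2}. For $x\in S_{\frac{\alpha\beta}{\delta}}(x_0)$ the inequalities $\frac{\alpha\beta}{\delta}<\alpha$ and $\frac{\alpha\beta}{\delta}<\beta$ give, through the strong triangle inequality, $|(x-x_0)+(x_0-x_1)|_p=\alpha$ and $|(x-x_0)+(x_0-x_2)|_p=\beta$. Substituting these into (\ref{f2}) and using $\delta=|\frac{2a+d}{3}|_p$, I obtain
$$|f(x)-x_0|_p=\frac{\left|\frac{2a+d}{3}(x-x_0)+(x_0-x_1)(x_0-x_2)\right|_p}{\delta}\leq\frac{\alpha\beta}{\delta},$$
the inequality holding because both summands in the numerator have norm $\alpha\beta$. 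Now the dichotomy runs as follows: if $|f(x)-x_0|_p<\frac{\alpha\beta}{\delta}$, then $f(x)$ lands on a sphere $S_{\mu_1}(x_0)$ with $\mu_1<\frac{\alpha\beta}{\delta}$, which is invariant by part 1, so $f^m(x)\in S_{\mu_1}(x_0)$ for all $m\geq1$ and 3.1 holds with $k=1$; otherwise $|f(x)-x_0|_p=\frac{\alpha\beta}{\delta}$, $f(x)$ again lies on $S_{\frac{\alpha\beta}{\delta}}(x_0)$, and I repeat the computation for $f^2(x),f^3(x),\dots$. At each step the radius either drops strictly below $\frac{\alpha\beta}{\delta}$ and is then frozen forever (case 3.1), or stays equal to $\frac{\alpha\beta}{\delta}$; if the latter occurs for every $k$ then $\{f^k(x):k\geq1\}\subset S_{\frac{\alpha\beta}{\delta}}(x_0)$, which is 3.2.

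The main obstacle is exactly this boundary analysis. The norm of the numerator $\frac{2a+d}{3}(x-x_0)+(x_0-x_1)(x_0-x_2)$ is governed by possible cancellation between two quantities of equal norm $\alpha\beta$, so whether the equality $|f(x)-x_0|_p=\frac{\alpha\beta}{\delta}$ breaks, and at which step, cannot be decided in advance; the two alternatives 3.1 and 3.2 are the honest packaging of this behavior. The step that makes the induction close is the invariance of the inner spheres from part 1, which guarantees that once the radius falls below $\frac{\alpha\beta}{\delta}$ it can never return, so the trajectory is permanently trapped on a single smaller sphere.
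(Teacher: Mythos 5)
Your proposal is correct and follows exactly the route the paper intends: the paper omits the proof, stating only that the arguments mirror the case $\alpha=\beta$, and your deduction from Lemma \ref{lf22} and Lemma \ref{lf225}, together with the boundary-sphere dichotomy copied from part C of Theorem \ref{t2} (the estimate $|f(x)-x_0|_p=\left|\tfrac{2a+d}{3}(x-x_0)+(x_0-x_1)(x_0-x_2)\right|_p/\delta\le \alpha\beta/\delta$ followed by induction, with the inner spheres' invariance trapping the orbit once the radius drops), is precisely that argument. No gaps.
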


\begin{rk} If $\alpha<\beta$ and $p\geq 3$ then by Lemma \ref{1} we have only
Theorem \ref{tf224} and Theorem \ref{tf225}. See Remark \ref{r2} for the case $\alpha>\beta$.
\end{rk}

\section{Dynamical system $f(x)={{ax^2+bx}\over{x^2+ax+b}}$ in $Q_p$ is not ergodic}

In this section we assume $a=d$ in (\ref{fd}), and suppose
that the square root $\sqrt{a^2-4b}$ exists in $Q_p$. Then (\ref{fd}) has the form
\begin{equation}\label{fab}
f(x)=\frac{ax^2+bx}{x^2+ax+b}
\end{equation}
where  $x\neq \hat x_{1,2}=\frac{-a\pm\sqrt{a^2-4b}}{2}$.

Consider the dynamical system (\ref{fab}) in
$Q_p$. It is easy to see that $x_0=0$ is unique fixed point for
(\ref{fab}).

We have $\delta=|a|_p$,  $\alpha=|\hat x_1|_p$,  $\beta=|\hat
x_2|_p$   and  $|b|_p=\alpha\beta$.

Since $\hat x_1+\hat x_2=-a$ we have $\delta\leq\max\{\alpha,\beta\}$.

Define the following sets
$$A_1=\{r: \, 0\leq r<\alpha\} \ \ {\rm if} \ \ \delta\leq\alpha=\beta;$$
$$A_2=\{r: \, r\in[0,\beta)\setminus\{\alpha\}\} \ \ {\rm if} \ \ \alpha<\beta=\delta;$$
and we denote $A=A_1\cup A_2$ for $\alpha\leq\beta$.

From previous sections we have the following
\begin{cor} The sphere $S_r(0)$ is invariant for $f$ if and only if $r\in A$.
\end{cor}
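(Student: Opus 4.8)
The plan is to reduce the corollary to the classification of invariant spheres already obtained in Section 3, once one determines which of the cases there can actually arise for (\ref{fab}). By the symmetry noted in Remark \ref{r2} I may assume $\alpha\le\beta$, so that $A=A_1\cup A_2$ is well defined. The decisive preliminary step is to observe that the relation $\hat x_1+\hat x_2=-a$ leaves open only the two regimes in which $A_1$ and $A_2$ are defined. Since $x_0=0$, one has $\alpha=|\hat x_1|_p$, $\beta=|\hat x_2|_p$ and $\delta=|a|_p=|\hat x_1+\hat x_2|_p$. If $\alpha<\beta$, then $|\hat x_1|_p\ne|\hat x_2|_p$ and the sharp form of the triangle inequality forces $\delta=\max\{\alpha,\beta\}=\beta$; if $\alpha=\beta$, the ordinary strong triangle inequality gives only $\delta\le\alpha$. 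Hence exactly the regimes $\delta\le\alpha=\beta$ and $\alpha<\beta=\delta$ can occur, matching precisely the hypotheses under which $A_1$ and $A_2$ are defined.

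First I would handle $\alpha=\beta$ (so $\delta\le\alpha$). If $\delta<\alpha$ I apply Theorem \ref{t1}, and if $\delta=\alpha$ I apply Theorem \ref{t3}; in either case $SI(x_0)=U_\alpha(x_0)$, so every sphere $S_r(0)$ with $0\le r<\alpha$ is invariant by the definition of a Siegel disk. This gives one inclusion. For the reverse inclusion I read off from part 2 and part 3 of Theorem \ref{t1} (respectively parts II and III of Theorem \ref{t3}) that no sphere of radius $r\ge\alpha$ is invariant: spheres with $r>\alpha$ are mapped strictly inside $V_\alpha(x_0)$, while $S_\alpha(0)$ contains points that escape. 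Therefore the invariant radii are exactly $[0,\alpha)=A_1$.

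Next I would handle $\alpha<\beta$, where necessarily $\delta=\beta$, and apply Theorem \ref{tf224}. Its part 1.1 gives invariance of $S_r(0)$ for all $r<\alpha$, and its part 1.2 gives invariance for all $r$ with $\alpha<r<\beta$; together these exhaust $A_2=[0,\beta)\setminus\{\alpha\}$. For the converse, parts 2, 3 and 4 of Theorem \ref{tf224} show that $S_\alpha(0)$, $S_\beta(0)$ and every $S_r(0)$ with $r>\beta$ fail to be invariant. Thus the invariant radii are exactly $A_2$, and combining the two cases yields the equivalence: $S_r(0)$ is invariant $\Leftrightarrow r\in A$.

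The step that needs genuine care is the non-invariance of the boundary spheres $S_\alpha(0)$ and, when $\alpha<\beta$, $S_\beta(0)$: one must exhibit actual escaping points, not merely points that are permitted to escape. Here I would use that the denominator of (\ref{fab}) vanishes exactly at $\hat x_1,\hat x_2$ and that, since $x_0=0$, these poles lie on the boundary spheres, $\hat x_1\in S_\alpha(0)$ and $\hat x_2\in S_\beta(0)$. Because $|f(x)|_p\to\infty$ as $x$ approaches a pole, every neighbourhood of $\hat x_1$ in $S_\alpha(0)$ (and of $\hat x_2$ in $S_\beta(0)$) contains points whose image has arbitrarily large norm, and these therefore leave the sphere; this is precisely the escaping behaviour recorded in Theorem \ref{t1}(3), Theorem \ref{t3}(III) and Theorem \ref{tf224}(2),(3). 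All remaining verifications are a direct transcription of the Section 3 results into the present notation $\delta=|a|_p$, $\alpha=|\hat x_1|_p$, $\beta=|\hat x_2|_p$, and require no new computation.
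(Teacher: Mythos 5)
Your proposal is correct and follows essentially the same route as the paper, which states the corollary with no written proof beyond ``from previous sections'' — i.e., it is read off from the classification of invariant spheres in Theorems \ref{t1}, \ref{t3} and \ref{tf224} after noting that $\hat x_1+\hat x_2=-a$ restricts the parameters to the regimes $\delta\le\alpha=\beta$ and $\alpha<\beta=\delta$. You in fact supply a detail the paper leaves implicit, namely that the boundary spheres $S_\alpha(0)$ and $S_\beta(0)$ genuinely fail to be invariant because they contain the poles $\hat x_1,\hat x_2$.
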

Since $x_0=0$ is an
indifferent fixed point, in this section we are
interested to study ergodicity properties of the dynamical system.

\begin{lemma}{\label{ab}}
For every closed ball $V_{\r}(c)\subset S_r(0), \, r\in A$ the
following equality holds $$f(V_{\r}(c))=V_{\r}(f(c)).$$
\end{lemma}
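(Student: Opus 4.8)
The plan is to reduce the statement to a single fact: that $f$ restricted to any invariant sphere $S_r(0)$, $r\in A$, is an isometry, i.e. $|f(x)-f(y)|_p=|x-y|_p$ for all $x,y\in S_r(0)$. Granting this, the inclusion $f(V_{\rho}(c))\subseteq V_{\rho}(f(c))$ is immediate: for $x\in V_{\rho}(c)\subset S_r(0)$ we get $|f(x)-f(c)|_p=|x-c|_p\le\rho$. The reverse inclusion (surjectivity onto the ball) is the only real content, and I expect it to be the main obstacle.

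To obtain the isometry I would first compute the difference explicitly. Writing $g(x)=x^2+ax+b$ one gets
$$f(x)-f(y)=\frac{(x-y)\big[(a^2-b)xy+ab(x+y)+b^2\big]}{g(x)g(y)},$$
so that, with $N(x,y)=(a^2-b)xy+ab(x+y)+b^2$ and using $|g(x)|_p=|x-\hat x_1|_p\,|x-\hat x_2|_p$, the claim $|f(x)-f(y)|_p=|x-y|_p$ is equivalent to $|N(x,y)|_p=|g(x)|_p|g(y)|_p$ on $S_r(0)$. Recall $|a|_p=\delta$, $|b|_p=\alpha\beta$, $|\hat x_1|_p=\alpha$, $|\hat x_2|_p=\beta$.

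I would then verify $|N(x,y)|_p=|g(x)|_p|g(y)|_p$ by the strong triangle inequality in each of the cases defining $A$. If $r\in A_1$ (so $\delta\le\alpha=\beta$ and $r<\alpha$), the isosceles property gives $|x-\hat x_1|_p=|x-\hat x_2|_p=\alpha$, hence $|g(x)|_p|g(y)|_p=\alpha^4$; and since $r<\alpha$ and $\delta\le\alpha$, the terms $(a^2-b)xy$ and $ab(x+y)$ are strictly smaller in norm than $b^2$, whence $|N(x,y)|_p=|b^2|_p=\alpha^4$. If $r\in A_2$ (so $\alpha<\beta=\delta$) there are two subcases. For $r<\alpha$ one has $|g(x)|_p=\alpha\beta$ and $|N(x,y)|_p=|b^2|_p=(\alpha\beta)^2$, exactly as before. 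For $\alpha<r<\beta$ one instead has $|x-\hat x_1|_p=r$, $|x-\hat x_2|_p=\beta$, so $|g(x)|_p|g(y)|_p=r^2\beta^2$; here $|a^2-b|_p=\beta^2$ and, because $\alpha<r<\beta$, the term $(a^2-b)xy$ strictly dominates the other two, giving $|N(x,y)|_p=\beta^2 r^2$. In every case the two norms coincide, so $f$ is an isometry on $S_r(0)$.

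For the reverse inclusion I would argue that $f$ is a bijection of $V_{\rho}(c)$ onto $V_{\rho}(f(c))$. Since $\hat x_1\in S_{\alpha}(0)$ and $\hat x_2\in S_{\beta}(0)$, while no $r\in A$ equals $\alpha$ or exceeds $\beta$, the denominator $g$ does not vanish on $V_{\rho}(c)$, so $f$ is analytic there and $|f'(c)|_p=1$ by the isometry. Expanding $f(x)=f(c)+\sum_{n\ge1}a_n(x-c)^n$ on $V_{\rho}(c)$, the isometry forces the linear term to dominate, i.e. $|a_1|_p\rho>|a_n|_p\rho^n$ for $n\ge2$. Given $w\in V_{\rho}(f(c))$, the Newton iteration for $f(x)=w$ started at $c$ then stays in $V_{\rho}(c)$ and converges, by completeness of $Q_p$ and the ultrametric estimates, to a point $x$ with $f(x)=w$ and $|x-c|_p=|w-f(c)|_p\le\rho$. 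Hence $V_{\rho}(f(c))\subseteq f(V_{\rho}(c))$, and together with the first inclusion this yields $f(V_{\rho}(c))=V_{\rho}(f(c))$. The delicate point, and the main obstacle, is precisely this surjectivity: one must control the higher-order coefficients on the whole ball, not merely near $c$, to guarantee the Newton scheme lands inside $V_{\rho}(c)$.
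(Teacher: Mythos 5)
Your isometry computation coincides with the paper's: the identity $f(x)-f(y)=(x-y)N(x,y)/\bigl(g(x)g(y)\bigr)$ with $N(x,y)=(a^2-b)xy+ab(x+y)+b^2$ is exactly the paper's formula (\ref{ab1}) with $y=c$ (the paper merely keeps $a^2cx$ and $-bcx$ as separate summands), and your case analysis over $A_1$ and the two subcases of $A_2$ reproduces the same norm comparisons, giving $|f(x)-f(c)|_p=|x-c|_p$ on every invariant sphere. The genuine difference is what happens next: the paper's proof stops at this isometry, which only yields the inclusion $f(V_{\rho}(c))\subseteq V_{\rho}(f(c))$, while the stated lemma (and its later uses, e.g.\ the measure-preservation identity (\ref{ab3}) and part 2 of Theorem \ref{ca}) needs the equality; so your reverse inclusion is not redundant pedantry but fills an omission in the paper. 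Concerning your argument for it: the conclusion is correct, but the one claim you assert rather than prove --- that the isometry forces the strict domination $|a_1|_p\rho>|a_n|_p\rho^n$ for all $n\ge2$ --- is exactly the delicate point, and over $Q_p$ (finite residue field, so not every residue class is realized by the powers $(x-c)^{n-1}$) it does not follow from the isometry by a one-line cancellation argument; it would need either a direct estimate of the Taylor coefficients of $f$ at $c$ or a more careful use of the two-variable factorization of $f(x)-f(y)$. A cleaner route, available precisely because you work in $Q_p$: closed balls are compact, and composing $f$ with the translation $y\mapsto y-f(c)+c$ gives an isometry of the compact metric space $V_{\rho}(c)$ into itself, which is automatically surjective; this yields $f(V_{\rho}(c))=V_{\rho}(f(c))$ with no control of higher coefficients at all.
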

\begin{proof}
From inclusion $V_{\r}(c)\subset S_r(0)$ we have $|c|_p=r$.

Let $x\in V_{\r}(c)$, i.e. $|x-c|_p\leq\r$, then
\begin{equation}{\label{ab1}}
|f(x)-f(c)|_p=|x-c|_p\cdot\frac{|a^2cx+ab(x+c)-bcx+b^2|_p}{|(x-\hat
x_1)(x-\hat x_2)(c-\hat x_1)(c-\hat x_2)|_p}.
\end{equation}
We have $$|a^2cx|_p=\delta^2r^2, \ \
|ab(x+c)|_p\leq\alpha\beta\delta r, \ \ |bcx|_p=\alpha\beta r^2, \
\ |b^2|_p=\alpha^2\beta^2.$$

If $r\in A_1$, then $\max\{\delta^2r^2, \,
\alpha\beta\delta r, \, \alpha\beta r^2, \,
\alpha^2\beta^2\}=\alpha^2\beta^2$ and $$|(x-\hat x_1)(x-\hat
x_2)(c-\hat x_1)(c-\hat x_2)|_p=\alpha^2\beta^2.$$
Using this equality by (\ref{ab1}) we get $|f(x)-f(c)|_p=|x-c|_p\leq\r.$

If $\alpha<\beta=\delta$, then $r\in A_2$. Consequently,
$r\in[0,\alpha)$ or $r\in(\alpha,\beta)$.

Let $0\leq r<\alpha$. Then
$|f(x)-f(c)|_p=|x-c|_p\leq\r.$

Let $\alpha<r<\beta$. Then $\max\{\delta^2r^2, \,
\alpha\beta\delta r, \, \alpha\beta r^2, \,
\alpha^2\beta^2\}=\delta^2r^2$ and $$|(x-\hat x_1)(x-\hat
x_2)(c-\hat x_1)(c-\hat x_2)|_p=r^2\beta^2.$$

Consequently $|f(x)-f(c)|_p=|x-c|_p\leq\r.$ This completes the
proof.
\end{proof}

Recall that $S_r(0)$ is invariant with respect to $f$ iff $r\in
A$.
\begin{lemma}{\label{ab2}}
If $c\in S_r(0)$, where $r\in A$, then
$$|f(c)-c|_p=\left\{\begin{array}{ll}
{r^3\over{\alpha\beta}}, \ \ \mbox{if} \ \ r<\alpha\\[2mm]
{r^2\over{\beta}}, \ \ \mbox{if} \ \ \alpha<r<\beta
\end{array}\right.$$
\end{lemma}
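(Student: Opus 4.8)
The plan is to reduce the computation of $|f(c)-c|_p$ to an elementary evaluation of a single $p$-adic norm, using the explicit form of $f$ in (\ref{fab}). First I would simplify the difference $f(c)-c$ directly: placing everything over the common denominator $c^2+ac+b$ gives
$$f(c)-c=\frac{ac^2+bc-c(c^2+ac+b)}{c^2+ac+b}=\frac{-c^3}{c^2+ac+b},$$
since the terms $ac^2$ and $bc$ cancel. Thus the entire problem collapses to computing the norm of a ratio whose numerator is $-c^3$.

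Next I would factor the denominator. Since $\hat x_1,\hat x_2$ are by definition the roots of $x^2+ax+b$, we have $c^2+ac+b=(c-\hat x_1)(c-\hat x_2)$, and by multiplicativity of $|\cdot|_p$ together with $|c|_p=r$ (which holds because $c\in S_r(0)$) we obtain
$$|f(c)-c|_p=\frac{r^3}{|c-\hat x_1|_p\,|c-\hat x_2|_p}.$$
It then remains only to evaluate the two factors in the denominator. Here I would invoke the isosceles-triangle principle (property 3.1 of the Preliminaries): when $|c|_p\neq|\hat x_i|_p$ the norm $|c-\hat x_i|_p$ equals $\max\{|c|_p,|\hat x_i|_p\}$. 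Recall $|\hat x_1|_p=\alpha$, $|\hat x_2|_p=\beta$ with $\alpha\leq\beta$.

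In the case $r<\alpha$ both inequalities $r<\alpha$ and $r<\beta$ are strict, so $|c-\hat x_1|_p=\alpha$ and $|c-\hat x_2|_p=\beta$, giving $|f(c)-c|_p=r^3/(\alpha\beta)$. In the case $\alpha<r<\beta$ (which can occur only inside $A_2$, i.e.\ when $\alpha<\beta=\delta$) one has $r>\alpha$ but $r<\beta$, hence $|c-\hat x_1|_p=r$ and $|c-\hat x_2|_p=\beta$, yielding $|f(c)-c|_p=r^3/(r\beta)=r^2/\beta$. These two subcases exhaust all $r\in A$ with $c\neq x_0$, matching the two branches of the asserted formula.

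The computation is entirely routine; the only point that demands care --- the mild ``obstacle'' --- is to confirm that the excluded value $r=\alpha$, where the isosceles-triangle principle fails because $|c|_p=|\hat x_1|_p$, never enters the argument. This is precisely what the definition of $A$ guarantees: $A_1$ requires $r<\alpha$, and $A_2$ explicitly removes the point $r=\alpha$. Consequently the two listed cases are genuinely the only ones that arise, and no further analysis of the borderline norms is needed.
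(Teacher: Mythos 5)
Your proof is correct and follows essentially the same route as the paper: the authors likewise reduce everything to the identity $f(c)-c=-c^3/\bigl((c-\hat x_1)(c-\hat x_2)\bigr)$ and read off the norm case by case. You merely spell out the evaluation of $|c-\hat x_1|_p$ and $|c-\hat x_2|_p$ via the isosceles-triangle property and the exclusion of $r=\alpha$ by the definition of $A$, details the paper leaves implicit.
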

\begin{proof}
Follows from the following equality
$$|f(c)-c|_p=\left|{{-c^3}\over{(c-\hat x_1)(c-\hat x_2)}}\right|_p=\left\{\begin{array}{ll}
{r^3\over{\alpha\beta}}, \ \ \mbox{if} \ \ r<\alpha\\[2mm]
{r^2\over{\beta}}, \ \ \mbox{if} \ \ \alpha<r<\beta.
\end{array}\right.$$
\end{proof}

By Lemma \ref{ab2} we have that $|f(c)-c|_p$ depends on $r$, but
does not depend on $c\in S_r(0)$ itself, therefore we define $\r(r)=|f(c)-c|_p$, if
$c\in S_r(0)$.

\begin{thm}{\label{ca}} If $c\in S_r(0), \, r\in A$ then
\begin{itemize}
\item[1.] For any $n\geq 1$ the following equality holds
\begin{equation}\label{en}
|f^{n+1}(c)-f^n(c)|_p=\r(r).
\end{equation}
\item[2.] $f(V_{\r(r)}(c))=V_{\r(r)}(c).$
\item[3.] If for some $\theta>0$ the ball $V_{\theta}(c)\subset S_r(0)$ is an invariant for $f$,
then $$\theta\geq \r(r).$$
\end{itemize}
\end{thm}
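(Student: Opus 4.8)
The plan is to derive all three parts directly from the two preceding lemmas (Lemma \ref{ab} and Lemma \ref{ab2}), exploiting crucially that $\r(r)$ depends only on the radius $r$ of the invariant sphere and not on the particular point of that sphere.

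For part 1, I would first observe that since $r\in A$ the sphere $S_r(0)$ is invariant, so $f^n(c)\in S_r(0)$ for every $n\geq 0$. Applying Lemma \ref{ab2} with the point $f^n(c)$ in place of $c$ then gives $|f^{n+1}(c)-f^n(c)|_p=|f(f^n(c))-f^n(c)|_p=\r(r)$ at once, because the value furnished by Lemma \ref{ab2} is the same $\r(r)$ for every point of $S_r(0)$.

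For part 2, the key preliminary step is to check that $\r(r)<r$, so that $V_{\r(r)}(c)\subset S_r(0)$ and Lemma \ref{ab} is applicable. For $r<\alpha$ one has $\r(r)=r^3/(\alpha\beta)$ and $r^2<\alpha\beta$ (since $r<\alpha\leq\beta$), while for $\alpha<r<\beta$ one has $\r(r)=r^2/\beta<r$; in both cases the ultrametric inequality forces $V_{\r(r)}(c)\subset S_r(0)$. Lemma \ref{ab} then yields $f(V_{\r(r)}(c))=V_{\r(r)}(f(c))$. Finally, Lemma \ref{ab2} gives $|f(c)-c|_p=\r(r)$, so $f(c)\in V_{\r(r)}(c)$; since two balls of equal radius whose centers lie within that radius of each other coincide, $V_{\r(r)}(f(c))=V_{\r(r)}(c)$, and hence $f(V_{\r(r)}(c))=V_{\r(r)}(c)$, as claimed. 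For part 3 I would argue by minimality: if $V_{\theta}(c)\subset S_r(0)$ is invariant, then $f(c)\in f(V_{\theta}(c))\subset V_{\theta}(c)$, so $|f(c)-c|_p\leq\theta$, and comparing with $|f(c)-c|_p=\r(r)$ from Lemma \ref{ab2} gives $\theta\geq\r(r)$.

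The only genuine verification is the inequality $\r(r)<r$ needed in part 2 to guarantee that $V_{\r(r)}(c)$ actually sits inside the invariant sphere, which is the hypothesis required to invoke Lemma \ref{ab}; once that is secured, each assertion reduces to a direct combination of the isometry statement of Lemma \ref{ab} with the radius-only dependence of Lemma \ref{ab2}. I expect no serious obstacle, since the theorem is essentially a reorganization of these two lemmas into statements about the orbit $\{f^n(c)\}$ and about minimal invariant balls.
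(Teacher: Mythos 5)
Your proof is correct, and it rests on the same two pillars as the paper's — the isometry/ball-mapping property of Lemma \ref{ab} and the radius-only dependence recorded in Lemma \ref{ab2} — but it differs in two small, worthwhile ways. For part 1 the paper runs an induction, using $|f(x)-f(c)|_p=|x-c|_p$ from the proof of Lemma \ref{ab} to propagate $|f^{n+1}(c)-f^n(c)|_p=|f(c)-c|_p$ step by step; you bypass the induction entirely by observing that $f^n(c)\in S_r(0)$ and that Lemma \ref{ab2} already gives $|f(y)-y|_p=\r(r)$ for \emph{every} $y\in S_r(0)$, which is shorter and equally valid. For part 2 you explicitly verify $\r(r)<r$ (hence $V_{\r(r)}(c)\subset S_r(0)$ by the ultrametric inequality) before invoking Lemma \ref{ab}; the paper applies Lemma \ref{ab} without checking this containment hypothesis, so your version actually closes a small gap in the published argument. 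Part 3 is identical in both. No issues.
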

\begin{proof}
1. Since $S_r(0)$ is an invariant of $f$, for any $c\in
S_r(0)$ and  $n\geq 1$ we have $f^n(c)\in S_r(0)$. Take
$x=f(c)$ then by (\ref{ab1}) and proof of Lemma \ref{ab} we get
$|f(x)-f(c)|_p=|x-c|_p$, consequently
$|f^{2}(c)-f(c)|_p=|f(c)-c|_p=\r(r)$. Thus for $n=1$ the equality (\ref{en}) is true.
We use the method of mathematical induction. Assume the equality (\ref{en}) is true for $n=k$, i.e.,
$$|f^{k+1}(c)-f^k(c)|_p=|f(c)-c|_p=\r(r).$$
We shall prove it for $n=k+1$.
Denote $x_1=f^{k+1}(c)$ and $c_1=f^k(c)$ then by (\ref{ab1}) and
proof of Lemma \ref{ab} we get
$|f(x_1)-f(c_1)|_p=|x_1-c_1|_p$. Therefore
$$|f(x_1)-f(c_1)|_p=|f^{k+2}(c)-f^{k+1}(c)|_p=|f^{k+1}(c)-f^k(c)|_p=|f(c)-c|_p=\r(r).$$
This completes the proof.

2. Since $|f(c)-c|_p=\r(r)$, we have $f(c)\in V_{\r(r)}(c)$ and $c\in V_{\r(r)}(f(c))$. By Lemma \ref{ab} and the fact that any point of a ball is its center we get
$$f(V_{\r(r)}(c))=V_{\r(r)}(f(c))=V_{\r(r)}(c).$$

3. Let $V_{\theta}(c)\subset S_r(0)$ and  the ball $V_{\theta}(c)$
is an invariant for $f$, then $f(c)\in V_{\theta}(c)$, i.e.
$|f(c)-c|_p\leq\theta$. We have $\r(r)=|f(c)-c|_p$ for every $c\in
S_r(0)$. So then $\r(r)\leq\theta$.
\end{proof}

For each $r\in A$ consider a measurable space $(S_r(0),\mathcal
B)$, here $\mathcal B$ is the algebra generated by closed
subsets of $S_r(0)$. Every element of $\mathcal B$ is a union of
some balls $V_{\r}(c)$.

A measure $\bar\mu:\mathcal B\rightarrow
\mathbb{R}$ is said to be \emph{Haar measure} if it is defined by
$\bar\mu(V_{\r}(c))=\r$. We consider normalized Haar measure:
$$\mu(V_{\r}(c))={{\bar\mu(V_{\r}(c))}\over{\bar\mu(S_r(0))}}={{\r}\over{r}}, \ r>0.$$

By Lemma \ref{ab} we conclude that $f$ preserves the measure
$\mu$, i.e.
\begin{equation}{\label{ab3}}
\mu(f(V_{\r}(c)))=\mu(V_{\r}(c)).
\end{equation}

Consider the dynamical system $(X,T,\mu)$, where $T:X\rightarrow X$ is
a measure preserving transformation, and $\mu$ is a measure.
We say that the dynamical system is {\it ergodic} (or alternatively that $T$ is ergodic with respect to $\mu$ or that $\mu$
is ergodic with respect to $T$) if for every invariant set $V$ we have $\mu(V)=0$ or $\mu(V)=1$ (see \cite{Wal}).

\begin{thm}
The dynamical system $(S_r(0), f, \mu)$ is not ergodic for any
$r\in A$, $r>0$ (i.e. for any invariant sphere $S_r(0)$). Here $\mu$ is the normalized Haar measure.
\end{thm}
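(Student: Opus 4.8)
The plan is to disprove ergodicity by exhibiting a single $f$-invariant measurable subset of $S_r(0)$ whose normalized Haar measure lies strictly between $0$ and $1$; by the definition recalled just before the statement, the existence of one such set already shows that $(S_r(0),f,\mu)$ is not ergodic. The point is that almost all of the required work has already been packaged into Theorem \ref{ca}, so the proof will be short and essentially a measure computation.

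First I would fix an arbitrary point $c\in S_r(0)$ and consider the closed ball $V_{\r(r)}(c)$, where $\r(r)=|f(c)-c|_p$ is the radius supplied by Lemma \ref{ab2} (recall it depends only on $r$, not on the chosen $c$). Part 2 of Theorem \ref{ca} states precisely that this ball is invariant, $f(V_{\r(r)}(c))=V_{\r(r)}(c)$, so no additional dynamical argument is needed to manufacture an invariant set. Thus the entire burden of the proof rests on computing its measure.

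By the definition of the normalized Haar measure one has $\mu(V_{\r(r)}(c))=\r(r)/r$, so everything reduces to verifying $0<\r(r)<r$. I would read off $\r(r)$ from Lemma \ref{ab2} and use $\alpha\le\beta$ together with the description of $A$: when $r<\alpha$ we have $\r(r)=r^3/(\alpha\beta)$, and $r^2<\alpha^2\le\alpha\beta$ forces $\r(r)<r$; when $\alpha<r<\beta$ we have $\r(r)=r^2/\beta<r$ directly. Since these two cases exhaust $r\in A$ with $r>0$, and $\r(r)>0$ because $r>0$, we conclude $\mu(V_{\r(r)}(c))\in(0,1)$. Together with the invariance $f(V_{\r(r)}(c))=V_{\r(r)}(c)$ this contradicts the ergodicity criterion, finishing the proof.

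I do not anticipate a genuine obstacle, since Theorem \ref{ca} both produces the invariant ball and (in part 3) identifies it as the minimal invariant ball. The one point requiring a word of care is that $V_{\r(r)}(c)$ is genuinely a \emph{proper} subset of the sphere: because $\r(r)<r=|c|_p$, every $x\in V_{\r(r)}(c)$ satisfies $|x|_p=r$, so $V_{\r(r)}(c)\subset S_r(0)$ is one block in the partition of $S_r(0)$ into balls of radius $\r(r)$, while $S_r(0)$ is strictly larger; this is exactly what makes its measure strictly less than $1$.
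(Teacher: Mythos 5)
Your proposal is correct and follows essentially the same route as the paper's own proof: invoke part 2 of Theorem \ref{ca} to obtain the invariant ball $V_{\r(r)}(c)$, read off $\r(r)$ from Lemma \ref{ab2}, and check that $\mu(V_{\r(r)}(c))=\r(r)/r$ equals $r^2/(\alpha\beta)$ or $r/\beta$ and hence lies strictly between $0$ and $1$. The only difference is cosmetic: you add the (correct) remark that $\r(r)<r$ also guarantees $V_{\r(r)}(c)$ sits inside the sphere, which the paper leaves implicit.
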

\begin{proof}
If a sphere $S_r(0)$ is invariant for $f$, then $0\leq r<\alpha$
or $\alpha<r<\beta$. By the part 2 of Theorem \ref{ca}, the ball
$V_{\r(r)}(c)$ is invariant for any $c\in S_r(0)$. Using Lemma
\ref{ab2} we get 
$$\mu(V_{\r(r)}(c))={{\r(r)}\over
r}=\left\{\begin{array}{ll}
{r^2\over{\alpha\beta}}, \ \ \mbox{if} \ \ 0\leq r<\alpha\\[2mm]
{r\over{\beta}}, \ \ \mbox{if} \ \ \alpha<r<\beta
\end{array}\right.$$
Since $0<r<\alpha\leq\beta$ we have  $0<{r^2\over{\alpha\beta}}<1$. If $\alpha<r<\beta$ then $0<{r\over\beta}<1$.
Therefore the dynamical system $(S_r(0), f, \mu)$ is not ergodic for all $r\in A$.

\end{proof}


\begin{thebibliography}{999}

\bibitem{ARS} S. Albeverio, U.A. Rozikov, I.A. Sattarov. $p$-adic $(2,1)$-rational dynamical systems. {\it Jour. Math. Anal. Appl.} {\bf 398}(2) (2013), 553--566.

\bibitem{AKK} S. Albeverio, P. E. Kloeden, A. Khrennikov, Human memory as a $p$-adic dynamical system, {\it Theor. Math. Phys}. {\bf 114}(3) (1998), 1414--1422.

\bibitem{A2} S. Albeverio, A. Khrennikov, B. Tirozzi, S. De Smedt, $p$-adic dynamical systems, {\it Theor. Math. Phys}. {\bf 114}(3) (1998), 276--287.

\bibitem{AK09} V. Anashin, A. Khrennikov. {\it Applied Algebraic Dynamics}, V. 49,
de Gruyter Expositions in Mathematics. Walter de Gruyter, Berlin, New York, 2009.

\bibitem{AKY11} V. S. Anashin, A. Yu. Khrennikov, and E. I. Yurova. Characterization of ergodicity of
$p$-adic dynamical systems by using van der Put basis. {\it Doklady Mathematics}, {\bf 83}(3) (2011), 306--
308.

\bibitem{Ana10} V. Anashin.  Non-Archimedean ergodic theory and pseudorandom generators. {\it The Computer
Journal}, {\bf 53}(4) (2010), 370--392.

\bibitem{CS} G. Call and J. Silverman, Canonical height on varieties with morphisms,
{\it Compositio Math}. {\bf 89}(1993), 163-205.

\bibitem{E} A. Escassut, Analytic Elements in p-Adic Analysis, World Scientific, River Edge, N. J. (1995).

\bibitem{FL11} A.-H. Fan and L.-M. Liao.  On minimal decomposition of $p$-adic polynomial dynamical
systems. {\it Adv. Math.}, {\bf 228} (2011), 2116--2144.

\bibitem{FF} A. Fan, S. Fan, L. Liao, Y. Wang, On minimal decomposition of $p$-adic homographic dynamical systems. {\it Adv. Math}. {\bf 257} (2014), 92--135.

\bibitem{KN} A. Yu. Khrennikov, M. Nilsson,  p-Adic Deterministic and Random Dynamics, Math. Appl., Vol. 574:
(Kluwer Acad. Publ., Dordreht, 2004).

\bibitem{K0} A.Yu. Khrennikov, p-Adic description of chaos, in: E. Alfinito, M. Boti (Eds.), Nonlinear Physics: Theory and
Experiment, WSP, Singapore, 1996, pp. 177–184.

\bibitem{KMa} M. Khamraev, F.M. Mukhamedov, On a class of rational $p$-adic dynamical systems, {\it J. Math. Anal. Appl}. {\bf 315} (1) (2006), 76-89.

\bibitem{Ko} N.Koblitz, {\it  $p$-adic numbers, $p$-adic analysis and zeta-function}
Springer, Berlin, 1977.

\bibitem{F} F.M. Mukhamedov, On the chaotic behavior
of cubic $p$-adic dynamical systems, {\it Math. Notes}. {\bf 83}(3) (2008), 428-–431.

\bibitem{M1} F.M. Mukhamedov, U.A. Rozikov,  On rational $p$-adic dynamical systems.
{\it Methods of Func. Anal. and Topology.} {\bf 10}(2) (2004), 21--31.

\bibitem{UF} F.M. Mukhamedov, U.A. Rozikov, A plynomial $p$-adic dynamical system. {\it Theor. Math. Phys}. {\bf 170}(3) (2012), 376-–383.

\bibitem{MM} F.M. Mukhamedov, J.F.F. Mendes, On the chaotic behavior of a generalized logistic
$p$-adic dynamical system. {\it J. Differential Equations}, {\bf 243} (2007), 125-–145.

\bibitem{PJS} H.-O.Peitgen, H.Jungers and D.Saupe, {\it Chaos Fractals}, Springer,
Heidelberg-New York, 1992.

\bibitem{Rob} A. M. Robert, A Course of p-Adic Analysis (Grad. Texts Math., Vol. 198), Springer, New York (2000).

\bibitem{RS} U.A. Rozikov, I.A. Sattarov. On a non-linear $p$-adic dynamical system. {\it $p$-Adic Numbers, Ultrametric Analysis and Applications}, {\bf 6}(1) (2014), 53--64.

\bibitem{S} I.A. Sattarov. $p$-adic $(3,2)$-rational dynamical
systems. {\it $p$-Adic Numbers, Ultrametric Analysis and
Applications}, {\bf 7}(1) (2015), 39--55.

\bibitem{Wal} P.Walters, {\it An introduction to ergodic theory.}
 Springer, Berlin-Heidelberg-New York, (1982).
\end{thebibliography}
\end{document}